\documentclass[12pt]{article}
\usepackage{amsfonts}

\usepackage{mathrsfs}
\usepackage{srcltx}
\usepackage{xcolor}
\textwidth 155mm \textheight 225mm \oddsidemargin 15pt
\evensidemargin 0pt \topmargin 0cm \headsep 0.3cm

\usepackage{amsmath}
\usepackage{amsthm}
\usepackage{amstext}
\usepackage{amsopn}
\usepackage{float}
\usepackage{subcaption}
\usepackage{tikz}
\usepackage{microtype}
\usepackage{graphicx}
\usepackage{bm}

\newtheorem{theorem}{Theorem}[section]
\newtheorem{lemma}[theorem]{Lemma}
\newtheorem{proposition}[theorem]{Proposition}
\newtheorem{corollary}[theorem]{Corollary}

\theoremstyle{definition}

\theoremstyle{remark}
\newtheorem{remark}[theorem]{Remark}

\numberwithin{equation}{section}

\newcommand{\ba}{\begin{array}}
\newcommand{\ea}{\end{array}}
\newcommand{\f}{\frac}

\newcommand{\la}{\lambda}

\newcommand{\ds}{\displaystyle}

\begin{document}
\date{}
\title{ \bf\large{Global dynamics of a two-species competition patch model in a Y-shaped river network}}
\author{Weifang Yan\textsuperscript{1} and\;Shanshan Chen\textsuperscript{2}\footnote{Corresponding Author, Email: chenss@hit.edu.cn; chenshanshan221@126.com}\ \
 \\
{\small \textsuperscript{1} School of Mathematics and Statistics Science, Ludong University,\hfill{\ }}\\
\ \ {\small Yantai, Shandong, 264001, P.R.China.\hfill{\ }}\\
{\small \textsuperscript{2} Department of Mathematics, Harbin Institute of Technology,\hfill{\ }}\\
\ \ {\small Weihai, Shandong, 264209, P.R.China.\hfill{\ }}\\
}
\maketitle

\begin{abstract}
In this paper, we investigate a two-species Lotka-Volterra
competition patch model in a Y-shaped river network, where
the two species are assumed to be
identical except for their random and directed
movements. We show that competition exclusion can occur under certain conditions, i.e., one of the semi-trivial equilibria
is globally asymptotically stable. Specifically, if the random dispersal rates of the two species are equal, the species with a smaller drift rate will drive the other species to extinction, which suggests that smaller
drift rates are favored.
\\[2mm]
\noindent {\bf Keywords}: global dynamics, competition patch model, river network \\[2mm]
\noindent {\bf MSC 2020}: 34D23, 34C12, 37C65, 92D25, 92D40.
\end{abstract}

\section{Introduction}
The species in rivers are subject to unidirectional flow, which
washes them downstream. One basic question (Q1) in river ecology is  \lq\lq drift paradox'' \cite{2-Muller-1954}: how species can persist in rivers with the flow-induced washout?
In the framework of reaction-diffusion-advection (RDA) models, Speirs and Gurney \cite{1-Speirs-2001} firstly showed that
the species can persist when the drift rate induced by the unidirectional flow is relatively slow, and the river is long enough. Similar results were obtained in
\cite{3-Lutscher-2014,4-Lou-Zhou-2015,5-Vasilyeva-Lutscher} and \cite{6-Chen-Shi-Shuai-Wu-2023} for RDA models and patch models with different boundary
conditions at the downstream end, respectively. In addition,
other factors, such as seasonal environments, Allee effect, etc., were also considered in persistence of stream-dwelling organisms, see \cite{7-Jin-Lewis-2011,8-Jin-Lewis-2012,9-Wang-Shi-2019,10-Wang-Shi-2019} and references therein.

Another basic question (Q2) in river ecology is what kind of strategies has competitive advantages. This question was first studied in spatially heterogeneous non-advective environments. It was shown in \cite{16-Dockery-1998,17-Hastings-1983} that, if the two species are identical except for their diffusion rates, the slower diffuser can wins the competition. In addition, the  global dynamics for the case of weak competition was studied in \cite{18-He-Ni-2016,19-Lam-Ni-2012,20-Lou-2006} and references therein.
The interaction between two competing species in rivers can be described by the following RDA model:
\begin{equation}\label{adv3}
\begin{cases}
\ds u_t=d_1 u_{x x}-q_1 u_x+ u\left(r-u-v\right), & 0<x<L,\;\; t>0, \\
\ds v_t=d_2 v_{x x}-q_2 v_x+v\left(r-u-v\right), & 0<x<L,\;\; t>0, \\
d_1 u_x-q_1 u=d_2 v_x-q_2 v=0, &x=0,\;\; t>0,\\
d_1 u_x-q_1 u=-\beta q_1u,\;\;d_2 v_x-q_2 v=-\beta q_2 v, &x=L,\;\; t>0,\\
u(x, 0)=u_0(x) \geq(\not \equiv) 0, \quad v(x, 0)=v_0(x) \geq(\not \equiv) 0,
\end{cases}
\end{equation}
where $u$ and $v$ are the densities of  two species; $d_1,d_2$ and $q_1,q_2$ are the diffusion rates and the drift rates of the two species, respectively; $r$ is the intrinsic growth rate of the two species; and  $\beta$ represents the population loss at the downstream end.
Speirs and Gurney \cite{1-Speirs-2001} proposed the hostile boundary condition at the downstream end, corresponding to
	$\beta=\infty$. This represents a scenario where a stream flows into an ocean.
The free-flow boundary condition ($\beta=1$) at the downstream end represents
a stream flowing into a lake, while the no-flux boundary condition ($\beta=0$) corresponds to
an inland stream \cite{3-Lutscher-2014,5-Vasilyeva-Lutscher,10-Wang-Shi-2019}.
Specifically, the no-flux boundary condition ($\beta=0$) implies no loss of individuals at the downstream end, representing an inland stream, which refers to an endorheic river that disappears into basins or deserts.

Fixing $q_1=q_2=q$ and viewing diffusion rate as a strategy, the authors in \cite{3-Lutscher-2014} showed that the species with faster diffusion rate wins the competition for $\beta=1$, and this result was extended to the case $\beta\in[0,1]$ in \cite{4-Lou-Zhou-2015}. The case $\beta>1$ is complex, as even small diffusion rates can lead to competitive dominance \cite{11-Hao-Lam-Lou-2021,4-Lou-Zhou-2015}. Fixing $d_1=d_2=d$ and viewing drift rate as a strategy, the authors in \cite{12-Lou-Xiao-Zhou-2011,13-Zhou-2016} showed that the species with slower drift rate has competitive advantages. The effect of $d_1,q_1,d_2,q_2$ on the global dynamics of model \eqref{adv3} was investigated in \cite{13-Zhou-2016,14-Zhou-Zhao-2018}. For spatially heterogeneous environments (replace $r$ by $r(x)$ in model \eqref{adv3}), the global dynamics of \eqref{adv3} is complex \cite{27-Ge-Tang-2023,26-Ge-Tang-2021,28-Lou-Zhao-Zhou-2019,29-Zhao-Zhou-2016,30-Zhou-Zhao-2018}, and there may exist some intermediate diffusion rate which is evolutionarily stable \cite{31-Lam-Lou-2015}.
Moreover,
other competition models were also studied extensively, see \cite{21-Lou-Nie-Wang-2018,22-Yan-Li-Nie-2021,15-Yan-Nie-Zhou-2022} for
models with different intrinsic growth rates of the two species and \cite{23-Ma-Tang-2020,24-Tang-Chen-2020,25-Wang-Xu-Zhou-2024} for models with different boundary conditions at the upstream end.
The interaction among stream-dwelling organisms are complex. There are also extensive results on other
population models in rivers, including predator-prey models \cite{47-Lou-Nie-2022,48-Nie-Wang-Wu-2020,49-Tang-Chen-2022,50-Wang-Nie-2022}, benthic-drift models \cite{51-Huang-Jin-2016,52-Jiang-Huang-2019,53-Wang-Shi-2020} and so on.

The discrete patch model of \eqref{adv3} under the no-flux boundary condition
($\beta=0$) takes the following form:
\begin{equation}\label{App1}
	\begin{cases}
		\ds\frac{{\rm d} w_k}{{\rm d}t}=\ds\sum_{j=1}^{n}(d_1D_{kj}+q_1Q_{kj})w_j+w_k(r-w_k-z_k),&k=1,\cdots,n,\;\;t>0,\\
		\ds\frac{{\rm d} z_k}{{\rm d}t}=\ds\sum_{j=1}^{n}(d_2D_{kj}+q_2Q_{kj})z_j+z_k(r-w_k-z_k),&k=1,\cdots,n,\;\;t>0,\\
        \bm w(0)=\bm w_0\ge(\not\equiv)\bm0,\;\;\bm z(0)=\bm z_0\ge(\not\equiv)\bm0,
	\end{cases}
\end{equation}
where the patches are located in a stream without branches, as shown in Figure \ref{fig2}. In this paper, a branch refers to a tributary that diverges from a main river.
$w_k$ and $z_k$ are numbers of two competing species in patch $k$, respectively, $d_1$ and $d_2$ are random movement rates of the two species, and $q_1$ and $q_2$  are directed drift rates. Moreover, the $n\times n$ matrices
 $(D_{kj})$ and $(Q_{kj})$ represent the diffusion pattern and the directed movement pattern of individuals, respectively, where
 \begin{equation}\label{App2}
	D_{kj}=\begin{cases}
		1,	&k=j-1 \;\mbox{or}\; k=j+1,\\
		-2, &k=j=2,\cdots,n-1,\\
		-1, &k=j=1,n,\\
		0,  &\mbox{otherwise},
	\end{cases}
	\qquad
	Q_{kj}=\begin{cases}
		1,	&k=j-1,\\
		-1, &k=j=2,\cdots,n,\\
		0,  &\mbox{otherwise}.
	\end{cases}
\end{equation}
Taking species $u$ as an example, for $j\ne k$, $d_1D_{kj}\ge0$ represents the movement rate from
patch $j$ to patch $k$ driven by random dispersal, while $q_1Q_{kj}$ represents  the movement rate from
patch $j$ to patch $k$ driven by directed drift. Additionally, $-d_1D_{kk}\ge0$ represents the departure rate from patch $k$ due to random dispersal,
and $-d_1Q_{kk}\ge0$ represents the departure rate from patch $k$ due to directed drift.
The patch models can admit similar dynamics as RDA models like \eqref{adv3} under certain conditions, see \cite{37-Chen-Liu-Wu-2022,38-Chen-Liu-Wu-2023,6-Chen-Shi-Shuai-Wu-2023,34-Hamida-2017,32-Jiang-Lam-Lou-2020,33-Jiang-Lam-Lou-2021,36-Lou-2021,35-Noble-2015} and references therein.
\begin{figure}[htbp]
	\centering
	\includegraphics[width=10cm]{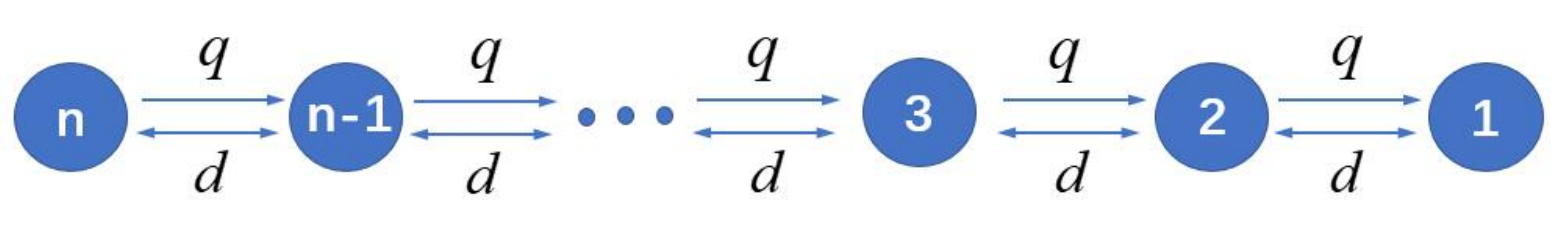}
	\caption{A stream with $n$ patches, where patch $n$ is the upstream end, and patch $1$ is the downstream end. Here $d$ is the random movement rate and $q$ is the directed drift rate for one species.} \label{fig2}
\end{figure}

Rivers may have complex topological structures, and ecologists have found that topology of river network can
affect population dynamics of  stream-dwelling organisms, see, e.g., \cite{S1-Cuddington-2002,S2-Fagan-2002,S3-Goldberg-2010,S4-Grant-2007,S5-Grant-2010}.
Several types of models were constructed to describe population dynamics of species in river networks, including integral-differential equations and RDA equations on  metric graphs \cite{44-Jin-Peng-2024,42-Jin-Peng-Shi-2019,39-Ramirez-2012,41-Sarhad-2014,40-Sarhad-2015,45-Vasilyeva-2019} and ordinary differential equations (e.g. patch models) \cite{S2-Fagan-2002,32-Jiang-Lam-Lou-2020,33-Jiang-Lam-Lou-2021,36-Lou-2021,S6-Casagrandi-2014}.

The above questions (Q1)-(Q2) were also concerned for species in river networks. For example, Vasilyeva \cite{45-Vasilyeva-2019} considered question (Q1) and studied the persistence of one species in a  Y-shaped river network (see Figure \ref{fig3}). To emphasize the effect of network geometry, it was assumed in \cite{45-Vasilyeva-2019} that the diffusion rate $d$, the drift rate $q$ and the intrinsic growth rate $r$ of the species are constant throughout the river network.
Then, for each river segment $i=1,2,3$, the population dynamics is modeled by the following RDA model:
\begin{equation*}
\ds\frac{\partial u_i}{\partial t}=d\frac{\partial^2 u_i}{\partial x^2}-q\frac{\partial u_i}{\partial x}+u_i(r-u_i),\;\;i=1,2,3,
\end{equation*}
where $u_i$ is the density of the species in segment $i$. At the junction point $x=-L_3$, the continuity condition and the flux balancing condition are imposed, and we omit them for simplicity.
A geometric method was used in \cite{45-Vasilyeva-2019} to show the existence and uniqueness of  positive steady state. Moreover, the authors in \cite{42-Jin-Peng-Shi-2019} obtained the persistence condition for a single species in general river networks.
\begin{figure}[htbp]
	\centering\includegraphics[width=7cm]{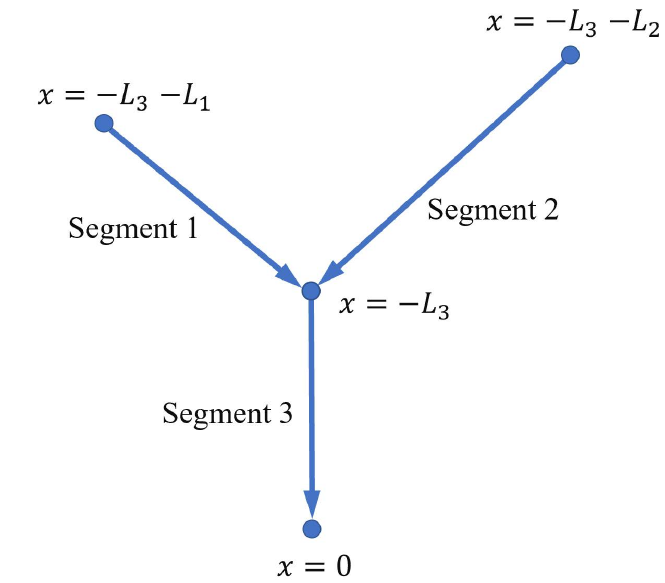}
	\caption{A Y-shaped river network \cite{45-Vasilyeva-2019}.} \label{fig3}
\end{figure}

To our best knowledge, for river networks with branches, there are few results on question (Q2) in the framework of RDA equations, and the results on question (Q2) for patch models mainly focus on the case $n=3$.
In fact, for appropriate matrices $(D_{kj})$ and $(Q_{kj})$, patch model \eqref{App1} can  be used to model
the interaction between two competing species in river networks with branches.
For example, if $n=3$, there are three types of the river networks, and for spatially heterogeneous environments (replace $r$ by $r_i$ in model \eqref{App1}), the authors in  \cite{32-Jiang-Lam-Lou-2020,46-Liu-Liu-Chen-2024} showed that: (i$_{ h}$) fixing $q_1=q_2=q$, there exists a critical value $\underline q$ such that the species with slower dispersal rate has competitive advantages for $q<\underline q$, and both network geometry and spatial heterogeneity have effect on the local and global dynamics for large drift rate; (ii$_{ h}$) fixing $d_1=d_2=d$, the species with slower drift rate has competitive advantages.

In this paper, we aim to study  the dynamics of model \eqref{App1} in a  Y-shaped river network with two branches, where the
number $n$ of patches is finite but arbitrary (see Figure \ref{fig1} for $n=8$).
For rivers without branches (see Figure \ref{fig2}), the authors in \cite{37-Chen-Liu-Wu-2022} showed that: (i) fixing $q_1=q_2=q$, the species with faster dispersal rate has competitive advantages, which is different from the above spatially heterogeneous case (see (i$_{ h}$)); (ii) fixing $d_1=d_2=d$, the species with slower drift rate has competitive advantages, which is similar to the above spatially heterogeneous case (see (ii$_{ h}$)). Our partial result in this paper
implies that (ii) also holds for a Y-shaped river network (Corollary \ref{4.7}).
It is still open whether (i) holds, and the authors in \cite{33-Jiang-Lam-Lou-2021} conjecture that (i) holds if the drift pattern $(Q_{kj})$ is not divergence free, i.e.,
there exists $1\le k\le n$ such that $\sum_{j\ne k}Q_{kj}\ne\sum_{j\ne k} Q_{jk}$. Our paper provides an initial step toward understand the dynamics of two-species competition model in river networks with branches.


Now we list some notations used throughout the paper. Denote $\mathbb N_0:=\{0,1,2,3,\cdots\}$. For $\gamma_1,\gamma_2\in \mathbb N_0$, the notation $\gamma_1\le k\le \gamma_2$ means that $$k\in M:=\{s\in \mathbb N_0:\gamma_1\le s\le \gamma_2\},$$ where
$M=\emptyset$ if $\gamma_1>\gamma_2$. For $\bm u=(u_1,\cdots,u_m)\in \mathbb{R}^m$, where $m$ is a positive integer, we write $\bm u\gg\bm0$ if $u_i>0$ for all $1\le i\le m$.

The rest of the paper is organized as follows. In Sect. 2, we rewrite model \eqref{App1} in another form and introduce some notions of a Y-shaped river
network for later use. In Sect. 3, we show that, under certain condition, there exist no positive equilibrium for model \eqref{App1} in the Y-shaped river
network. Then, we obtain the global dynamics in Sect. 4. Finally, we give some numerical simulations and conclusion remarks in Sect. 5.

\section{Model}\label{model2r}
In this section, we introduce some notations for a Y-shaped river
	network shown in Figure \ref{fig1} and rewrite model \eqref{App1} for later use. The notations are motivated by the RDA model in \cite{42-Jin-Peng-Shi-2019}.
\begin{figure}[htbp]
	\centering
	\includegraphics[width=9cm]{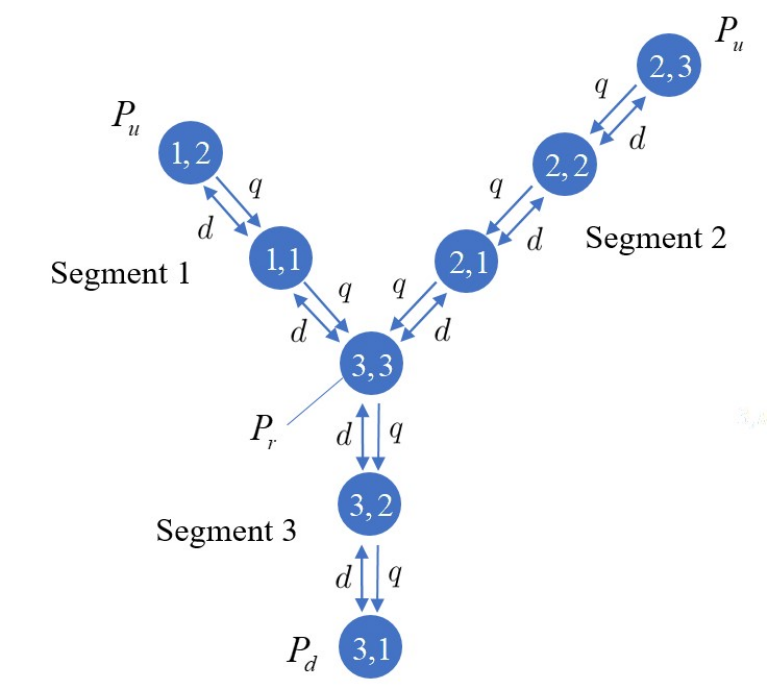}
	\caption{A Y-shaped river network. Here $m_1=2$, $m_2=3$, $m_3=3$, and $d$ and $q$ are the dispersal rate and the drift rate of one species, respectively.} \label{fig1}
\end{figure}
The Y-shaped river network in Figure \ref{fig1} consists of two branches (denoted by river segments 1 and 2) flowing into the main river (denoted by river segment 3). The set of all
patches is denoted by
\begin{equation}\label{pp}
P=\{(i,k):i=1,2,3,k\in N_i\}.
\end{equation}
Here
\begin{equation}
N_i=\{1,2,\cdots,m_i\}\quad\text{for}\quad i=1,2,3,
\end{equation}
where $1\le m_1\le m_2$ and $m_3\ge 2$. The first coordinate $i$ denotes that the patch $(i,k)$ is located in river segment $i$, while the second coordinate $k$ represents that it is the $k$-th patch in segment $i$.
The river network shown in Figure \ref{fig1}  can be viewed as the discrete form of that in Figure \ref{fig3}.

We divide the  patches in Figure \ref{fig1}  into four types:
\begin{equation}\label{type}
\begin{split}
&\text{(Upstream end patches)}\;\;\;\;P_u=\{(1,m_1),(2,m_2)\},\\
&\text{(Downstream end patch)} \;\;\;\;P_d=\{(3,1)\},\\
&\text{(Junction patch)}\;\;\;\;P_r=\{(3,m_3)\}, \\
&\text{(Interior patches)}\;\;\;\;P_o=P\setminus (P_d\cup P_u\cup P_r).
 \end{split}
\end{equation}
Denote
\begin{equation*}
\begin{split}
&\bm u(t)=\left(u^1_{1}(t),\cdots,u^1_{m_1}(t),u_{1}^2(t),\cdots,u^2_{m_2}(t),u^{3}_1(t),\cdots,u^3_{m_3}(t)\right),\\
&\bm v(t)=\left(v^1_{1}(t),\cdots,v^1_{m_1}(t),v_{1}^2(t),\cdots,v^2_{m_2}(t),v^{3}_1(t),\cdots,v^3_{m_3}(t)\right),
\end{split}
\end{equation*}
where $u^i_k(t)$ and $v^i_k(t)$ denote the numbers of two competing species $u$ and $v$ in patch $(i,k)$ at time $t$, respectively.
Then model \eqref{App1} can be rewritten as follows: (replace $w$ and $z$ by $u$ and $v$)
\begin{equation}\label{pat-r}
\begin{cases}
\ds\frac{{\rm d} u^i_k}{{\rm d}t}=d_1u^{i}_{k-1}-(2d_1+q_1)u^i_k+(d_1+q_1)u^{i}_{k+1}+u^i_k\left(r-u^i_k-v^i_k\right), &(i,k)\in P_o,\\
\ds\frac{{\rm d} v^i_k}{{\rm d}t}=d_2v^{i}_{k-1}-(2d_2+q_2)v^i_k+(d_2+q_2)v^{i}_{k+1}+v^i_k\left(r-u^i_k-v^i_k\right), &(i,k)\in P_o,\\
\ds\frac{{\rm d} u^i_k}{{\rm d}t}=-(d_1+q_1)u^i_k+d_1u^{i}_{k-1}+u^i_k\left(r-u^i_k-v^i_k\right), &(i,k)\in P_u,\\
\ds\frac{{\rm d} v^i_k}{{\rm d}t}=-(d_2+q_2)v^i_k+d_2v^{i}_{k-1}+v^i_k\left(r-u^i_k-v^i_k\right), &(i,k)\in P_u,\\
\ds\frac{{\rm d} u^i_k}{{\rm d}t}=-d_1u^i_k+(d_1+q_1)u^{i}_{k+1}+u^i_k\left(r-u^i_k-v^i_k\right), &(i,k)\in P_d,\\
\ds\frac{{\rm d} v^i_k}{{\rm d}t}=-d_2v^i_k+(d_2+q_2)v^{i}_{k+1}+v^i_k\left(r-u^i_k-v^i_k\right), &(i,k)\in P_d,\\
\ds\frac{{\rm d} u^i_k}{{\rm d}t}=d_1u^{i}_{k-1}-(3d_1+q_1)u^i_{k}+(d_1+q_1)(u^{1}_1+u^2_1)+u^i_k\left(r-u^i_k-v^i_k\right),&(i,k)\in P_r,\\
\ds\frac{{\rm d} v^i_k}{{\rm d}t}=d_2v^{i}_{k-1}-(3d_2+q_2)v^i_{k}+(d_2+q_2)(v^{1}_1+v^2_1)+v^i_k\left(r-u^i_k-v^i_k\right),&(i,k)\in P_r,\\
\bm u(0)=\bm u_0\ge(\not\equiv)\bm0,\;\bm v(0)=\bm v_0\ge(\not\equiv)\bm0,
\end{cases}
\end{equation}
where
$d_1$ and $d_2$ denote the dispersal rates of species $u$ and $v$, respectively; $q_1$ and $q_2$ denote the drift rates of species $u$ and $v$, respectively. The two species have the same intrinsic growth rate, denoted by $r>0$, which indicates that they have enough
resource.

\section{Nonexistence of positive equilibria}
In this section, we show the nonexistence of positive equilibria for model \eqref{pat-r}, a critical step in analyzing its global dynamics.
In Section \ref{sub31}, we present our main result as Theorem \ref{priori2}. Section \ref{sub22} provides some \textit{a priori} estimates on the positive equilibrium, which are crucial for the proof of Theorem \ref{priori2}. Finally, The proof of Theorem \ref{priori2} is detailed in Section \ref{sub33}.
\subsection{Main result}\label{sub31}
Define the set $\mathcal S$ as the union of two subsets $\mathcal S_1 $ and $\mathcal S_2$, as follows:
\begin{equation}\label{S}
	\mathcal S:=\mathcal S_1 \cup\mathcal S_2,
\end{equation}
where
\begin{equation}\label{S1S2}
	\begin{split}
		&\mathcal S_{1}:=\{(d,q):\; 0<d \le d_1,0<q\le \ds\f{q_1}{d_1}d,(d,q)\ne(d_1,q_1)\},\\
		&\mathcal S_2:=\{(d,q):\; d\ge d_1,q\ge  \frac{q_1}{d_1}d,(d,q)\ne(d_1,q_1)\}.\\
	\end{split}
\end{equation}
\begin{figure}[htbp]
	\centering\includegraphics[width=9cm]{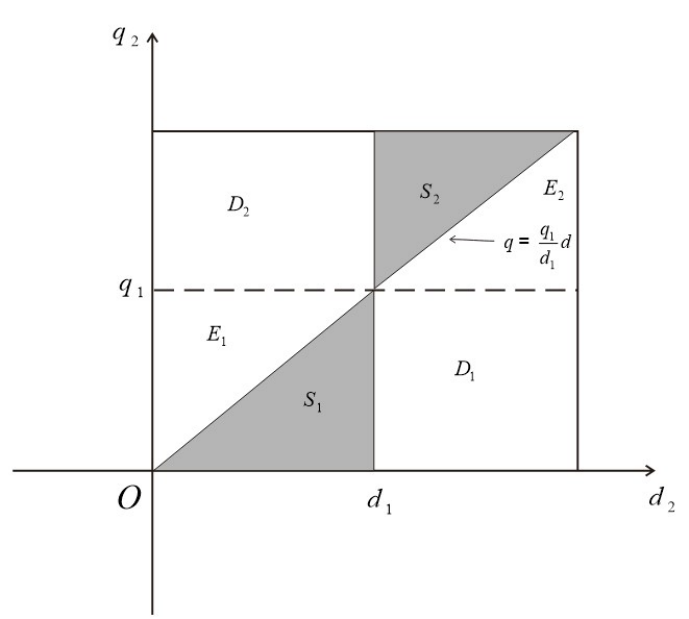}
	\caption{Illustration of $\mathcal S_{1}$ and $\mathcal S_{2}$.} \label{fig4}
\end{figure}
These subets are illustrated in Figure \ref{fig4}. Our main result for this section is presented below.
\begin{theorem}\label{priori2}
	Assume that $d_1,q_1>0$ and $(d_2,q_2)\in\mathcal S$. Then model
	\eqref{pat-r} admits no positive equilibria.
\end{theorem}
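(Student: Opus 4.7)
The plan is to argue by contradiction: assume a positive equilibrium $(\bm u, \bm v)$ of \eqref{pat-r} exists with $(d_2, q_2) \in \mathcal S$. First, I reduce to the case $(d_2, q_2) \in \mathcal S_1$ by a relabeling symmetry. Model \eqref{pat-r} is invariant under the joint swap $(u, v, d_1, q_1, d_2, q_2) \mapsto (v, u, d_2, q_2, d_1, q_1)$. If $(d_2, q_2) \in \mathcal S_2$, then $d_1 \le d_2$ and $q_1/d_1 \le q_2/d_2$, which is precisely the statement that after the swap the old pair $(d_1, q_1)$ lies in $\mathcal S_1$ relative to the new reference $(d_2, q_2)$. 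So it suffices to treat the case $(d_2, q_2) \in \mathcal S_1$.

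Next, I analyze the signs of the sequences $\{f^i_k\}_{(i,k)\in P^*}$ and $\{g^i_k\}_{(i,k)\in P^*}$. The boundary values are $f^3_1 = g^3_1 = 0$ and $f^i_{m_i+1} = g^i_{m_i+1} = 0$ for $i = 1, 2$. Lemma \ref{priori} couples the signs tightly: on segment $3$, $f^3_l \ge 0$ forces $g^3_l \ge 0$, while on segments $i = 1, 2$, $f^i_l \le 0$ forces $g^i_l \le 0$. Lemma \ref{canot}(i), applied segment-by-segment, further rules out the pattern where $(f, g)$ are both nonpositive at two flanking indices but both nonnegative strictly in between, so $(f, g)$ admits at most one sign transition on each segment. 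Combined with the boundary values and the junction relations \eqref{steady-c}--\eqref{steady-d}, the global sign pattern of $\{f^i_k\}_{(i,k)\in P^*}$ is forced into a short list of admissible configurations.

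It then remains to check that every such admissible configuration coincides with one of the four cases excluded by Lemma \ref{canot}(ii)--(iv) or one of the two cases excluded by Lemma \ref{complex}(i)--(ii), which yields the desired contradiction. The main obstacle is precisely this final case analysis: the Y-shape produces several qualitatively distinct combinations of single-segment sign patterns, and one must verify that the six enumerated cases indeed exhaust every possibility consistent with the boundary data and the flux balance at the junction. The identity \eqref{fgiden3} of Lemma \ref{impident} underlies the most delicate eliminations, namely those where all three segments contribute nontrivially through the weighted flux sum and the sign information must be combined globally rather than segment-by-segment.
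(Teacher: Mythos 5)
Your overall strategy matches the paper's: reduce to $(d_2,q_2)\in\mathcal S_1$ via the swap symmetry (the paper performs exactly this reduction, noting $(d_1,q_1)\in\tilde{\mathcal S}_1$ when $(d_2,q_2)\in\mathcal S_2$), then derive a contradiction from the sign structure of $\{f^i_k\}$ and $\{g^i_k\}$ using Lemmas \ref{priori}, \ref{canot} and \ref{complex}. However, there is a genuine gap: the entire substance of the proof lies in the case enumeration that you explicitly defer, and your description of what remains is not accurate. You assert that it suffices to check that every admissible sign configuration coincides with one of the four cases of Lemma \ref{canot}(ii)--(iv) or the two cases of Lemma \ref{complex}. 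In the paper's proof this is false: several configurations are \emph{not} excluded by those lemmas. Specifically, when $f^3_{m_3}>0$ (the paper's case (iii)), or when $f^3_{m_3}\le 0$ but some branch has $f^i_1,g^i_1>0$ (cases (B$_1$), (B$_2$)), the contradiction comes from a direct monotonicity argument at the junction: positivity of $f^i_1,g^i_1$ forces $u^i_1+v^i_1\ge r$ (otherwise $f^i_k,g^i_k>0$ propagates by induction to $k=m_i+1$, contradicting $f^i_{m_i+1}=g^i_{m_i+1}=0$), while the flux balance \eqref{steady-c} forces $u^3_{m_3}+v^3_{m_3}<r$ and hence $u^i_1+v^i_1<r$. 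These are the paper's Claims 4 and 5, and they are not corollaries of the six enumerated ``cannot occur'' cases.

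A second, related gap: your claim that Lemma \ref{canot}(i) yields ``at most one sign transition on each segment'' is not established in that generality (and note the lemma's flanking hypothesis is $\min\{f_k,g_k\}\le 0$, not that both are nonpositive). The paper must construct the auxiliary indices $j_i^*$, $j_i^{**}$, $\hat j_1^*$, $j_*$ (its Claims 1--3 and \eqref{upper2}, \eqref{3.13}, \eqref{lower}) to locate the possible sign changes precisely and to verify that the resulting indices satisfy the exact hypotheses of Lemma \ref{canot}(iv) and Lemma \ref{complex}, e.g.\ the orderings $1\le\tilde l_1^*\le l_1^*\le m_1$ and the boundary inequalities \eqref{lem26-d}. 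Without producing these indices and checking those hypotheses, the ``short list of admissible configurations'' is never actually generated, so the exhaustiveness of the case analysis --- which you yourself identify as the main obstacle --- remains unproved.
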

We will prove Theorem \ref{priori2} in Section \ref{sub33}.
\subsection{\textit{A priori} estimates}\label{sub22}
Suppose that model \eqref{pat-r} admits a positive equilibrium, denoted by $(\bm u,\bm v)$ with $\bm u,\bm v\gg\bm 0$,
where
\begin{equation}\label{uuuvv}
\begin{split}
&\bm u=(u^1_{1},\cdots,u^1_{m_1},u_{1}^2,\cdots,u^2_{m_2},u^{3}_1,\cdots,u^3_{m_3}),\\
&\bm v=(v^1_{1},\cdots,v^1_{m_1},v_{1}^2,\cdots,v^2_{m_2},v^{3}_1,\cdots,v^3_{m_3}).
\end{split}
\end{equation}
For simplicity of notations, we set
\begin{equation}\label{fff}
u^1_0=u^2_0:=u^3_{m_3}, \;\;v^1_0=v^2_0:=v^3_{m_3},
\end{equation}
and define two finite sequences $\left\{f^i_k\right\}_{(i,k)\in P^*}$ and $\left\{g^i_k\right\}_{(i,k)\in P^*}$ with
\begin{equation}\label{pstar}
P^*:=P\cup\{(i,k):(i,k-1)\in P_u\}=P\cup\{(1,m_1+1),(2,m_2+1)\}
\end{equation}
and
\begin{equation}\label{fg}
\begin{split}
&f^i_k=\begin{cases}
d_1u_{k-1}^i-(d_1+q_1)u_{k}^{i}, &(i,k)\in P_o\cup P_u\cup P_r,\\
0,&(i,k)\in P_d\cup\{(1,m_1+1),(2,m_2+1)\},\\
\end{cases}\\
&g^i_k=\begin{cases}
d_2v_{k-1}^i-(d_2+q_2)v_{k}^{i}, &(i,k)\in P_o\cup P_u\cup P_r,\\
0,&(i,k)\in P_d\cup\{(1,m_1+1),(2,m_2+1)\},\\
\end{cases}
\end{split}
\end{equation}
where $P$, $P_o$, $P_u$, $P_r$, and $P_d$ are defined in \eqref{pp} and \eqref{type}.
 By \eqref{fff} and \eqref{fg},
\begin{equation}
\begin{split}
&f_1^i=d_1u_{0}^i-(d_1+q_1)u^i_1=d_1u_{m_3}^3-(d_1+q_1)u^i_1,\;\;i=1,2,\\
&g_1^i=d_2v_{0}^i-(d_2+q_2)v^i_1=d_2v_{m_3}^3-(d_2+q_2)v^i_1,\;\;i=1,2.
\end{split}
\end{equation}
Then $(\bm u,\bm v)$ satisfies
\begin{subequations}\label{steady}
\begin{align}
&f^i_k-f^{i}_{k+1}=-u^i_k\left(r-u^i_k-v^i_k\right), \;\;\;\;\;\;\;\;(i,k)\in P_o\cup P_u\cup P_d,\label{steady-a}\\
&g^i_k-g^{i}_{k+1}=-v^i_k\left(r-u^i_k-v^i_k\right), \;\;\;\;\;\;\;\;(i,k)\in P_o\cup P_u\cup P_d,\label{steady-b}\\
&f^i_k-f_1^1-f^2_1=-u^i_k\left(r-u^i_k-v^i_k\right),\;\;\;(i,k)\in P_r,\label{steady-c}\\
&g^i_k-g_1^1-g^2_1=-v^i_k\left(r-u^i_k-v^i_k\right),\;\;\;\;(i,k)\in P_r.\label{steady-d}
\end{align}
\end{subequations}

It is worth noting that the two auxiliary sequences $\{f_k^i\}_{(i,k)\in P^*}$ and $\{g_k^i\}_{(i,k)\in P^*}$  are inspired by \cite{13-Zhou-2016}, where they were employed to demonstrate the nonexistence of positive steady states in PDE models for rivers without branches. Similar approaches can also be found in \cite{37-Chen-Liu-Wu-2022,46-Liu-Liu-Chen-2024} for patch models.
Specifically, by analyzing the signs (positive or negative) of these two sequences, one can derive contradictions, which implies the nonexistence of positive equilibria.

However, for a Y-shaped river network, each of these two auxiliary sequences comprises three subsequences, introducing additional technical challenges in analyzing their signs (positive or negative).
Below, we develop a method to estimate the signs of these auxiliary sequences for the Y-shaped river network illustrated in Figure \ref{fig1}.
The \textit{a priori} estimates of the two auxiliary sequences $\{f_k^i\}_{(i,k)\in P^*}$ and $\{g_k^i\}_{(i,k)\in P^*}$ are classified into two categories:
\begin{enumerate}
\item [(t$_1$)] estimates of $\{f_k^i\}_{(i,k)\in P^*}$ and $\{g_k^i\}_{(i,k)\in P^*}$ for a fixed branch (i.e., fixed $i$);
\item [(t$_2$)] estimates of $\{f_k^i\}_{(i,k)\in P^*}$ and $\{g_k^i\}_{(i,k)\in P^*}$  across the three branches.
\end{enumerate}
We first  present  two \textit{a priori} estimates of type (t$_1$).
The first estimate is derived using the method of upper and lower solutions.
\begin{lemma}\label{priori}
Assume that $d_1,q_1>0$, $(d_2,q_2)\in\mathcal S_1$, and let $\{f^i_k\}_{(i,k)\in P^\ast}$ and $\{g^i_k\}_{(i,k)\in P^\ast}$ be defined as in \eqref{fg}. Then the following two statements hold:
\begin{enumerate}
\item [\rm{(i)}] If there exists  $2\le l\le m_3$ such that $f^3_l\ge0$, then $g^3_l\ge0$ for $l=2$ and $g^3_l>0$ for $2<l\le m_3$;
\item [\rm{(ii)}]For each $i=1,2$, if there exists $1\le l\le m_i$ such that $f^i_l\le0$, then $g^i_l\le0$ for  $l=m_i$ and $g^i_l<0$ for $1\le l<m_i$.
\end{enumerate}
\end{lemma}
\begin{proof}
We only prove (i), and (ii) can be treated similarly. Note from \eqref{fg} that $f_{1}^3=g_1^3=0$.
If $l=2$, then we see from \eqref{steady-a}-\eqref{steady-b} that
${f_2^3}/{u_1^3}={g_2^3}/{v_1^3}$,
which implies $g^3_l\ge0$.

For $l>2$, suppose to the contrary that $g^3_l\le0$.
Now we consider the following auxiliary system:
\begin{equation}\label{auxi2-1}
\begin{cases}
\ds\frac{{\rm d} w_k}{{\rm d}t}=\ds\sum_{j=1}^{l-1}(d_1D_{kj}+q_1Q_{kj})w_j+w_k(r-w_k-z_k),&k=1,\cdots,l-1,\\
\ds\frac{{\rm d} z_k}{{\rm d}t}=\ds\sum_{j=1}^{l-1}(d_2D_{kj}+q_2Q_{kj})z_j+z_k(r-w_k-z_k),&k=1,\cdots,l-1,\\
\end{cases}
\end{equation}
where $(D_{kj})$ and $(Q_{kj})$ are $(l-1)\times (l-1)$ matrices with
\begin{equation}\label{3.2}
D_{kj}=\begin{cases}
1,	&k=j-1 \;\mbox{or}\; k=j+1,\\
-2, &k=j=2,\cdots,l-2,\\
-1, &k=j=1,l-1,\\
0,  &\mbox{otherwise},
\end{cases}
\qquad
Q_{kj}=\begin{cases}
	1,	&k=j-1,\\
	-1, &k=j=2,\cdots,l-1,\\
	0,  &\mbox{otherwise}.
\end{cases}
\end{equation}
The  equilibrium of \eqref{auxi2-1} satisfies
\begin{equation}\label{auxi2}
\begin{cases}
-\ds\sum_{j=1}^{l-1}(d_1D_{kj}+q_1Q_{kj})w_j-w_k(r-w_k-z_k)=0,&k=1,\cdots,l-1,\\
-\ds\sum_{j=1}^{l-1}(d_2D_{kj}+q_2Q_{kj})z_j-z_k(r-w_k-z_k)=0,&k=1,\cdots,l-1.\\
\end{cases}
\end{equation}
Define an order
\begin{equation}\label{order}
(\bm w^1, \bm z^1){\succeq} (\bm w^2, \bm z^2) \;\; \text{if}\;\;w^1_k\ge w^2_k\;\;\text{and}\;\;z^1_k\le z^2_k\;\;\text{for}\;\;k=1,\cdots,l-1.
\end{equation}
Then system \eqref{auxi2-1} generates a monotone dynamical system, which is order preserving.
Recall that $(\bm u,\bm v)$ is a positive equilibrium of \eqref{pat-r} with  $f^3_l\ge0$ and $g^3_l\le0$, where
\begin{equation*}
\begin{split}
&\bm u=(u^1_{1},\cdots,u^1_{m_1},u_{1}^2,\cdots,u^2_{m_2},u^{3}_1,\cdots,u^3_{m_3}),\\
&\bm v=(v^1_{1},\cdots,v^1_{m_1},v_{1}^2,\cdots,v^2_{m_2},v^{3}_1,\cdots,v^3_{m_3}).
\end{split}
\end{equation*}
It follows that
\begin{equation}\label{auxi2-lower}
\begin{cases}
-\ds\sum_{j=1}^{l-1}(d_1D_{kj}+q_1Q_{kj})u^3_j-u^3_k(r-u^3_k-v^3_k)=0,&k=1,\cdots,l-2,\\
-\ds\sum_{j=1}^{l-1}(d_1D_{kj}+q_1Q_{kj})u^3_j-u^3_k(r-u^3_k-v^3_k)=-f_{l}^3\le0,&k=l-1,\\
-\ds\sum_{j=1}^{l-1}(d_2D_{kj}+q_2Q_{kj})v^3_j-v^3_k(r-u^3_k-v^3_k)=0,&k=1,\cdots,l-2,\\
-\ds\sum_{j=1}^{l-1}(d_2D_{kj}+q_2Q_{kj})v^3_j-v^3_k(r-u^3_k-v^3_k)=-g_{l}^3\ge0,&k=l-1,
\end{cases}
\end{equation}
which implies that $(u^3_1,\cdots,u^3_{l-1},v^3_1,\cdots,v^3_{l-1})$ is a lower solution of \eqref{auxi2} (or sub-equilibrium of \eqref{auxi2-1}).

By Proposition \ref{lem-A1},
system \eqref{auxi2-1} has a semi-trivial equilibrium $(\bm{w}^\ast,\mathbf{0})$ with  $\bm{w}^\ast=(w^\ast_1,\cdots,w^\ast_{l-1})\gg\bm0$, which is unstable.
Denote by $\lambda_1$ the principal eigenvalue of the following eigenvalue problem:
\begin{equation}\label{3.1}
\sum_{j=1}^{l-1}(d_2D_{ij}+q_2Q_{ij})\phi_j+(r-w^\ast_i)\phi_i=\lambda\phi_i,\;i=1,\cdots,l-1,
\end{equation}
and denote the corresponding eigenvector by $\bm \psi^T$
with $\bm \psi=(\psi_1,\cdots,\psi_{l-1})\gg\bm0$. Since
$(\bm{w}^\ast,\mathbf{0})$ is unstable, it follows that $\lambda_1>0$.
We first choose $\varepsilon_1>0$ so that
\begin{equation}\label{compare1}
\la_1>\varepsilon_1\psi_k,\;v^3_k>\varepsilon_1\psi_k\;\;\text{for all} \;\;1\le k\le l-1.
\end{equation}
Then we show that, for $ 0<\varepsilon_2\ll1$,
$(\bm{w}^\ast+\varepsilon_2\bm{1},\varepsilon_1\bm\psi)$
is an upper solution of \eqref{auxi2} (or super-equilibrium of \eqref{auxi2-1}), where $\bm {1}=\left(1,\cdots,1 \right)$.
A direct computation yields, for $k=1,\cdots,l-1$,
\begin{equation*}
\begin{split} &-\ds\sum_{j=1}^{l-1}(d_1D_{kj}+q_1Q_{kj})(w_j^*+\varepsilon_2)-(w_k^*+\varepsilon_2)(r-w_k^*-\varepsilon_2-\varepsilon_1\psi_k)=\mathcal R_k^1(\varepsilon_1,\varepsilon_2),\\ &-\ds\sum_{j=1}^{l-1}(d_2D_{kj}+q_2Q_{kj})\varepsilon_1\psi_j-\varepsilon_1\psi_k(r-w_k^*-\varepsilon_2-\varepsilon_1\psi_k)= \varepsilon_1\psi_k\mathcal R_k^2(\varepsilon_1,\varepsilon_2),
\end{split}
\end{equation*}
where
\begin{equation*}
\begin{split}
&\mathcal R_k^1(\varepsilon_1,\varepsilon_2):=\varepsilon_2\left[-q_1\left(\sum_{j=1}^{l-1}Q_{kj}\right)-r+2w_k^*+\varepsilon_2+\varepsilon_1\psi_k\right]+\varepsilon_1\psi_kw_k^*,\\
&\mathcal R_k^2(\varepsilon_1,\varepsilon_2):=-\la_1+\varepsilon_2+\varepsilon_1\psi_k.
\end{split}
\end{equation*}
Since
\begin{equation*}
\lim_{\varepsilon_2\to0}\mathcal R_k^1(\varepsilon_1,\varepsilon_2)>0,\;\;\lim_{\varepsilon_2\to0}\mathcal R_k^2(\varepsilon_1,\varepsilon_2)<0,\;\;k=1,\cdots,l-1,
\end{equation*}
it follows that
$(\bm{w}^\ast+\varepsilon_2\bm{1},\varepsilon_1\bm\psi)$
is an upper solution of \eqref{auxi2} for $0<\varepsilon_2\ll1$.
 By \eqref{auxi2-lower}, we see that $(u^3_1,\cdots,u^3_{l-1})$ is a lower solution of
$$
 -\ds\sum_{j=1}^{l-1}(d_1D_{kj}+q_1Q_{kj})u^3_j-u^3_k(r-u^3_k)=0,\;\;k=1,\cdots,l-1,
 $$
 which implies $u^3_k\le w_k^*$, and consequently, $u^3_k<w^\ast_k+\varepsilon_2$ for $k=1,\cdots,l-1$. This combined with
 \eqref{compare1}
 implies that
 $$(\bm{w}^\ast+\varepsilon_2\bm{1},\varepsilon_1\bm\psi)\succeq (\not =)(u^3_1,\cdots,u^3_{l-1},v^3_1,\cdots,v^3_{l-1}),$$ where the order \lq\lq$\succeq$'' is defined in \eqref{order}.
 Then, by \cite[Lemma 1.1]{43-Hess-1992}, there exists a positive equilibrium for system \eqref{auxi2-1}, which contradicts
Proposition \ref{lem-A2} (see Appendix). This completes the proof.
\end{proof}

The second estimate of type (t$_1$) is based on the characteristics of equation \eqref{steady}.
\begin{lemma}\label{canot}
Assume that $d_1,q_1>0$ and $(d_2,q_2)\in \mathcal S_1$, and let $\{f^i_k\}_{(i,k)\in P^\ast}$ and $\{g^i_k\}_{(i,k)\in P^\ast}$ be defined as in \eqref{fg}. Then, for fixed $i=1,2$ (resp. $i=3$), the following case cannot occur:
there exist $l_*$ and $l^*$ with $1\le l_*< l^*\le m_i$ (resp. $1\le l_*< l^*\le m_i-1$) such that
\begin{subequations}\label{lem21}
\begin{align}
&f_k^i,\;g_k^i\ge0\;\;\text{for}\;\; l_*+1\le k\le l^*;\label{lem21-a}\\
&\min\{f^i_{k},\;g^i_{k}\}\le0\;\;\text{for}\;\;k=l_*,\;l^*+1.\label{lem21-b}
\end{align}
\end{subequations}
\end{lemma}
\begin{proof}
Suppose to the contrary that this case occurs. Since $(i,k)\in P_o\cup P_u\cup P_d$ for $l_*\le k\le l^*$, it follows from \eqref{steady} that, for $l_*\le k\le l^*$,
\begin{equation}\label{pro1}
\begin{split}
&f^i_k-f^{i}_{k+1}=-u^i_k\left(r-u^i_k-v^i_k\right),\\
&g^i_k-g^{i}_{k+1}=-v^i_k\left(r-u^i_k-v^i_k\right).\\
\end{split}
\end{equation}
By \eqref{lem21} and \eqref{pro1} with $k=l_*,l^*$,
\begin{equation}\label{last}
u^i_{l_*}+v^i_{l_*}\le r\;\;\text{and}\;\;u^i_{l^*}+v^i_{l^*}\geq r.
\end{equation}
In addition, we see from \eqref{lem21-a} that
$$u^i_{l_*}>\cdots>u^i_{l^*}\;\;\text{and}\;\;v^i_{l_*}>\cdots>v^i_{l^*},$$
which contradicts \eqref{last}. This completes the proof.
\end{proof}
To establish \textit{a priori} estimates of type (t$_2$), we first derive an identity for $\left\{f^i_k\right\}_{(i,k)\in P^*}$ and $\left\{g^i_k\right\}_{(i,k)\in P^*}$.
\begin{lemma}\label{impident}
For $1\le {l_3}\le m_3$ and $0\le {l_i}\le m_i$ $(i=1,2)$, the following identity holds:
\begin{equation}\label{fgiden3}
\begin{split}
&\left(f_{{l_3}}^3v_{l_3}^3-g_{{l_3}}^3u_{{l_3}}^3\right)\left(\f{d_1+q_1}{d_1}\right)^{{l_3}-m_3}-
\sum_{i=1}^2\left(f^i_{{l_i}+1}v^i_{{l_i}}-g^i_{{l_i}+1}u^i_{{l_i}}\right)\left(\f{d_1+q_1}{d_1}\right)^{{l_i}}\\
=&\frac{1}{d_1}\sum_{i=1}^2\sum_{k=1}^{{l_i}}h_k^if^i_k\left(\f{d_1+q_1}{d_1}\right)^{k-1}+\frac{1}{d_1}\sum_{k={l_3}+1}^{m_3}h_k^3f^3_k\left(\f{d_1+q_1}{d_1}\right)^{k-m_3-1},
\end{split}
\end{equation}
where $\left\{f^i_k\right\}_{(i,k)\in P^*}$ and $\left\{g^i_k\right\}_{(i,k)\in P^*}$ are defined in \eqref{fg}, and
\begin{equation}\label{3.3-2}
			h^i_k:=\begin{cases}
				(d_1-d_2)(v_{k-1}^i-v_{k}^{i})-(q_1-q_2)v_{k}^{i}, &(i,k)\in P_o\cup P_u\cup P_r,\\
				0,&(i,k)\in P_d\cup\{(1,m_1+1),(2,m_2+1)\}.\\
			\end{cases}
	\end{equation}
\end{lemma}
\begin{proof}
Let
	\begin{equation}\label{3.3}
			\bar{g}^i_k:=\begin{cases}
				d_1v_{k-1}^i-(d_1+q_1)v_{k}^{i}, &(i,k)\in P_o\cup P_u\cup P_r,\\
				0,&(i,k)\in P_d\cup\{(1,m_1+1),(2,m_2+1)\}.\\
			\end{cases}\\
\end{equation}
It follows from
\eqref{steady-a} and \eqref{steady-c}  that
\begin{subequations}\label{2.29}
	\begin{align}
		&f^3_k-f^{3}_{k+1}=-u^3_k\left(r-u^3_k-v^3_k\right), \;\;\; l_3\leq k\leq m_3-1,\label{2.29-a}\\
		&f^3_{m_3}-f_1^1-f^2_1=-u^3_{m_3}\left(r-u^3_{m_3}-v^3_{m_3}\right),\label{2.29-b}\\
        &f^1_k-f^{1}_{k+1}=-u^1_k\left(r-u^1_k-v^1_k\right), \;\;\; 1\leq k\leq l_1,\label{2.29-c}\\
        &f^2_k-f^{2}_{k+1}=-u^2_k\left(r-u^2_k-v^2_k\right), \;\;\; 1\leq k\leq l_2.\label{2.29-d}
	\end{align}
\end{subequations}
Multiplying \eqref{2.29-a} by $\left(\dfrac{d_1+q_1}{d_1} \right)^kv^3_k$ and summing them from $k=l_3$ to $k=m_3-1$, we have
	\begin{equation}\label{3.4}
		\begin{split}
			-\sum_{k=l_3}^{m_3-1}u^3_k v^3_k\left(r-u^3_k-v^3_k \right) \left(\dfrac{d_1+q_1}{d_1} \right)^k=&\sum_{k=l_3}^{m_3-1}\left(f^3_k-f^3_{k+1} \right) v^3_k
			\left(\dfrac{d_1+q_1}{d_1} \right)^k\\
			=&f^3_{l_3} v^3_{l_3} \left(\dfrac{d_1+q_1}{d_1} \right)^{l_3}
			-\sum_{k=l_3+1}^{m_3-1}f^3_k \bar{g}^3_k \frac{(d_1+q_1)^{k-1}}{d_1^k}\\
			&-f^3_{m_3} v^3_{m_3-1} \left(\dfrac{d_1+q_1}{d_1} \right)^{m_3-1}
			.
		\end{split}
	\end{equation}
	Multiplying \eqref{2.29-b} by $\left(\dfrac{d_1+q_1}{d_1} \right)^{m_3}v^3_{m_3}$, we obtain that
	\begin{equation}\label{3.5}
		-u^3_{m_3} v^3_{m_3}\left(r-u^3_{m_3}-v^3_{m_3}\right) \left(\dfrac{d_1+q_1}{d_1} \right)^{m_3}
		=\left(f^3_{m_3}-f^1_{1}-f^2_1\right) v^3_{m_3}
		\left(\dfrac{d_1+q_1}{d_1} \right)^{m_3}.
	\end{equation}
	Multiplying \eqref{2.29-c} by $\left(\dfrac{d_1+q_1}{d_1} \right)^{m_3+k}v^1_k$ and summing them from $k=1$ to $k=l_1$, we obtain that
	\begin{equation}\label{3.6}
		\begin{split}
	-&\sum_{k=1}^{l_1}u^1_k v^1_k\left(r-u^1_k-v^1_k \right) \left(\dfrac{d_1+q_1}{d_1} \right)^{m_3+k}\\ =&\sum_{k=1}^{l_1}\left(f^1_k-f^1_{k+1} \right) v^1_k
			\left(\dfrac{d_1+q_1}{d_1} \right)^{m_3+k}\\
			=&f^1_{1} v^1_{1} \left(\dfrac{d_1+q_1}{d_1} \right)^{m_3+1}
			-f^1_{l_1+1} v^1_{l_1} \left(\dfrac{d_1+q_1}{d_1} \right)^{m_3+l_1}\\
			&-\sum_{k=2}^{l_1}f^1_k \bar{g}^1_k \frac{(d_1+q_1)^{m_3+k-1}}{d_1^{m_3+k}}.
		\end{split}
	\end{equation}
Similarly, we have
	\begin{equation}\label{2.31}
	\begin{split}
-&\sum_{k=1}^{l_2}u^2_k v^2_k\left(r-u^2_k-v^2_k \right) \left(\dfrac{d_1+q_1}{d_1} \right)^{m_3+k}\\
=&f^2_{1} v^2_{1} \left(\dfrac{d_1+q_1}{d_1} \right)^{m_3+1}
		-f^2_{l_2+1} v^2_{l_2} \left(\dfrac{d_1+q_1}{d_1} \right)^{m_3+l_2}\\
		&-\sum_{k=2}^{l_2}f^2_k \bar{g}^2_k \frac{(d_1+q_1)^{m_3+k-1}}{d_1^{m_3+k}}.
	\end{split}
\end{equation}
	Summing \eqref{3.4}-\eqref{2.31} yields
	\begin{equation}\label{3.7}
		\begin{split}
			-&\sum_{k=l_3}^{m_3}u^3_k v^3_k\left(r-u^3_k-v^3_k \right) \left(\dfrac{d_1+q_1}{d_1} \right)^k
			-\sum_{i=1}^2\sum_{k=1}^{l_i}u^i_k v^i_k\left(r-u^i_k-v^i_k \right) \left(\dfrac{d_1+q_1}{d_1} \right)^{m_3+k}\\
			=&f^3_{l_3} v^3_{l_3} \left(\dfrac{d_1+q_1}{d_1} \right)^{l_3}
			-\sum_{k=l_3+1}^{m_3}f^3_k \bar{g}^3_k \frac{(d_1+q_1)^{k-1}}{d_1^k}\\
			&-\sum_{i=1}^2f^i_{l_i+1} v^i_{l_i} \left(\dfrac{d_1+q_1}{d_1} \right)^{m_3+l_i}
			-\sum_{i=1}^2\sum_{k=1}^{l_i}f^i_k \bar{g}^i_k \frac{(d_1+q_1)^{m_3+k-1}}{d_1^{m_3+k}}.
		\end{split}
	\end{equation}
Similar to \eqref{2.29}, we see from \eqref{steady-b} and \eqref{steady-d} that
\begin{equation}\label{2.32}
	\begin{split}
		&g^3_k-g^{3}_{k+1}=-v^3_k\left(r-u^3_k-v^3_k\right), \;\;\; l_3\leq k\leq m_3-1,\\
		&g^3_{m_3}-g_1^1-g^2_1=-v^3_{m_3}\left(r-u^3_{m_3}-v^3_{m_3}\right),\\
		&g^1_k-g^{1}_{k+1}=-v^1_k\left(r-u^1_k-v^1_k\right), \;\;\; 1\leq k\leq l_1,\\
		&g^2_k-g^{2}_{k+1}=-v^2_k\left(r-u^2_k-v^2_k\right), \;\;\; 1\leq k\leq l_2.
	\end{split}
\end{equation}
Then, using similar arguments as in the proof of \eqref{3.7},  we deduce from \eqref{2.32} that
	\begin{equation}\label{3.8}
		\begin{split}
			-&\sum_{k=l_3}^{m_3}u^3_k v^3_k\left(r-u^3_k-v^3_k \right) \left(\dfrac{d_1+q_1}{d_1} \right)^k-\sum_{i=1}^2\sum_{k=1}^{l_i}u^i_k v^i_k\left(r-u^i_k-v^i_k \right) \left(\dfrac{d_1+q_1}{d_1} \right)^{m_3+k}\\
			=&g^3_{l_3} u^3_{l_3} \left(\dfrac{d_1+q_1}{d_1} \right)^{l_3}
			-\sum_{k=l_3+1}^{m_3}g^3_k f^3_k \frac{(d_1+q_1)^{k-1}}{d_1^k}\\
			&-\sum_{i=1}^2g^i_{l_i+1} u^i_{l_i} \left(\dfrac{d_1+q_1}{d_1} \right)^{m_3+l_i}
			-\sum_{i=1}^2\sum_{k=1}^{l_i}g^i_k f^i_k \frac{(d_1+q_1)^{m_3+k-1}}{d_1^{m_3+k}}.\\
		\end{split}
	\end{equation}
	It is easy to check that $g^i_k=\bar{g}^i_k-h^i_k$ for $(i,k)\in P^\ast$. Then, we obtain (\ref{fgiden3}) by taking the difference of (\ref{3.7}) and (\ref{3.8}).
\end{proof}
\begin{remark}
For any $k_1,k_2\in \mathbb N_0$ and any sequence $\{s_k\}_{k=0}^\infty\subset \mathbb{R}$, we
adopt the convention
\begin{equation}\label{abuse}
\ds\sum_{k=k_1}^{k_2} s_k=0\;\;\text{when}\;\; k_1>k_2.
\end{equation}
Using this notation,
we note that \eqref{fgiden3} holds if $l_1=0$ or $l_2=0$ or $l_3=m_3$. 
\end{remark}

To derive \textit{a priori} estimates of type (t$_2$),
it is also necessary to analyze  $\{f_k^i\}_{(i,k)\in P^*}$ and $\{g_k^i\}_{(i,k)\in P^*}$  across two upstream branches (i.e., $i=1,2$).

\begin{lemma}\label{canot2}
Assume that $d_1,q_1>0$ and $(d_2,q_2)\in \mathcal S_1$ with $\mathcal S_1$ defined in \eqref{S1S2}, and let $\{f^i_k\}_{(i,k)\in P^\ast}$ and $\{g^i_k\}_{(i,k)\in P^\ast}$ be defined as in \eqref{fg}. Then the following four cases cannot occur:
\begin{enumerate}
\item [\rm{(i)}]There exist constants $l_1,l_2$ with $1\le l_2\le l_1\le m_1$  such that
\begin{subequations}\label{lem23}
\begin{align}
&f_k^1,\;g_k^1\ge 0\;\;\text{for}\;\; 1\le k\le l_1;\;\;f_k^2,\;g_k^2\le0\;\text{for}\; 1\le k\le l_2;\label{lem23-a}\\
&\min\{f_{l_1+1}^1,g_{l_1+1}^1\}\le0\;\;\text{and}\;\;\max\{f_{l_2+1}^2,g_{l_2+1}^2\}\ge0.\label{lem23-b}
\end{align}
\end{subequations}

\item [\rm{(ii)}]There exist constants $l_1,l_2$ with $1\le l_1\le l_2\le m_2$  such that
\begin{subequations}\label{lem28}
\begin{align}
	&f_k^1,\;g_k^1\le 0\;\;\text{for}\;\; 1\le k\le l_1;\;\;f_k^2,\;g_k^2\ge0\;\text{for}\; 1\le k\le l_2;\label{lem28-a}\\
	&\max\{f_{l_1+1}^1,g_{l_1+1}^1\}\ge0\;\;\text{and}\;\;\min\{f_{l_2+1}^2,g_{l_2+1}^2\}\le0.\label{lem28-b}
\end{align}	
\end{subequations}

\end{enumerate}
\end{lemma}
\begin{proof}
(i) Suppose to the contrary that (i) occurs. Note that \eqref{pro1} also holds for $k=l_i\;(i=1,2)$.
Then we see from \eqref{lem23} that
\begin{equation}\label{cc2}
u^1_{l_1}+v^1_{l_1}\ge r\ge u^2_{l_2}+v^2_{l_2},
\end{equation}
By \eqref{lem23-a} again,
\begin{equation*}
\begin{split}
&u^1_{l_2}\le \left(\frac{d_1}{d_1+q_1}\right)^{l_2}u_{m_3}^3\le u^2_{l_2},\\
&v^1_{l_2}\le\left(\frac{d_2}{d_2+q_2}\right)^{l_2} v_{m_3}^3\le v^2_{l_2},\\
&u^1_{l_1}< \cdots<u^1_{1}<u_{m_3}^3,\;\;v^1_{l_1}< \cdots<v^1_{1}<v_{m_3}^3.
\end{split}
\end{equation*}
This combined with $l_2\le l_1$ yields
\begin{equation}\label{21cos}
u^1_{l_1}+v^1_{l_1}\le u^1_{l_2}+v^1_{l_2}\le u^2_{l_2}+v^2_{l_2}.
\end{equation}
Note from Lemma \ref{priori} (ii) that at least one element of sequence
 $\{g^2_k\}_{k=1}^{l_2}$ is negative if
 $l_2<m_2$ or $l_2=m_2$ with $m_2>1$.
Thus, one of the inequalities of \eqref{21cos} is strict if
 $l_2<m_2$ or $l_2=m_2$ with $m_2>1$ or $l_2<l_1$,
which contradicts \eqref{cc2}.

Then, in view of $1\le m_1\le m_2$, we only need to consider the case  $l_1=l_2=m_2=m_1=1$.
It follows from \eqref{cc2} and \eqref{21cos} that
\begin{equation}\label{21sin1-2}
u^2_{1}+v^2_{1}=r=u^1_{1}+v^1_{1}.
\end{equation}
Plugging \eqref{21sin1-2} into \eqref{pro1} and noticing that $f_2^2=g_2^2=f_2^1=g_2^1=0$,
we have
$f^1_1=g^1_1=f_1^2=g_1^2=0$. This implies that $(u_{1}^3,\cdots,u_{m_3}^3,v_{1}^3,\cdots,v_{m_3}^3)$ is a positive equilibrium of \eqref{App1} with $n=m_3$, which contradicts Proposition \ref{lem-A2} (see Appendix).
Therefore, (i) cannot occur.

(ii) Suppose to the contrary that (ii) occurs.
We only need to obtain a contradiction for the case that $1=l_1=l_2=m_1<m_2$, and other cases can be treated using similar arguments as in the proof of (ii).
For this case, \eqref{21sin1-2} also holds, and
plugging it into \eqref{pro1} with $(i,k)=(2,1)$,
we have
$f^2_2=f^2_1$ and $g^2_2=g^2_1$.
This combined with \eqref{lem28-a} implies that $f^2_2=f^2_1\ge0$ and $g^2_2=g^2_1\ge0$, and consequently,
$u^2_1>u^2_2$ and $v^2_1>v^2_2$, which deduce (by \eqref{21sin1-2}) that
$u^2_2+v^2_2<r$.
Then, by \eqref{pro1} and induction,
\begin{equation*}
f^2_k>0,\;\;g^2_k>0\;\;\text{for}\;\;k=3,\cdots,m_2+1,
\end{equation*}
which contradicts $f^2_{m_2+1}=g^2_{m_2+1}=0$. Therefore, (ii) cannot occur.
\end{proof}

We now establish \textit{a priori} estimates of type (t$_2$) by applying Lemmas \ref{impident} and \ref{canot2}.
\begin{lemma}\label{complex}
Assume that $d_1,q_1>0$ and $(d_2,q_2)\in \mathcal S_1$ with $\mathcal S_1$ defined in \eqref{S1S2}, and let $\{f^i_k\}_{(i,k)\in P^\ast}$ and $\{g^i_k\}_{(i,k)\in P^\ast}$ be defined as in \eqref{fg}. Then the following three cases cannot occur:
\begin{enumerate}
\item [\rm{(i)}]There exist constants $l_1, l_2, l_3$ with $0\le l_i\le m_i\;(i=1,2)$ and $1\le l_3\le m_3$ such that
\begin{subequations}\label{lem25}
\begin{align}
&f_k^i,\;g_k^i\le0\;\;\text{for}\;\;i=1,2\;\;\text{and}\;\; 1\le k\le l_i;\label{lem25-a}\\
&f_k^3,\;g_k^3\le0\;\;\text{for}\;\; l_3+1\le k\le m_3;\label{lem25-b}\\
&f_{l_3}^3,\;g_{l_1+1}^1,\;g_{l_2+1}^2\le 0\;\;\text{and}\;\;g_{l_3}^3,\;f_{l_1+1}^1,\;f_{l_2+1}^2\ge0.\label{lem25-c}
\end{align}
\end{subequations}
\item [\rm{(ii)}] There exist constants $ \tilde{l}_1, l_1, l_2, l_3$ with $1\le \tilde{l}_1\le l_1\le m_1$, $1\le l_2\le m_2$ and $1\le l_3\le m_3$ such that
\begin{subequations}\label{lem26}
\begin{align}
&f_k^1,\;g_k^1\ge0\;\;\text{for}\;\;1\le k\le \tilde{l}_1;\;\;f_k^1,\;g_k^1\le0\;\;\text{for}\;\;  \tilde{l}_1<k\le l_1;\label{lem26-a}\\
&f_k^2,\;g_k^2\le0\;\;\text{for}\;\; 1\le k\le l_2;\label{lem26-b}\\
&f_k^3,\;g_k^3\le0\;\;\text{for}\;\; l_3+1\le k\le m_3;\label{lem26-c}\\
&f_{l_3}^3,\;g_{l_1+1}^1,\;g_{l_2+1}^2\le 0\;\;\text{and}\;\;g_{l_3}^3,\;f_{l_1+1}^1,\;f_{l_2+1}^2\ge0.\label{lem26-d}
\end{align}
\end{subequations}
\item [\rm{(iii)}] There exist constants $l_1, \tilde{l}_2, l_2, l_3$ with $1\le \le m_1$, $1\le  \tilde{l}_2\le l_2\le m_2$ and $1\le l_3\le m_3$ such that
\begin{subequations}\label{lem27}
\begin{align}
&f_k^1,\;g_k^1\le0\;\;\text{for}\;\; 1\le k\le l_1;\label{lem27-a}\\
&f_k^2,\;g_k^2\ge0\;\;\text{for}\;\;1\le k\le \tilde{l}_2;\;\;f_k^2,\;g_k^2\le0\;\;\text{for}\;\;  \tilde{l}_2<k\le l_2;\label{lem27-b}\\
&f_k^3,\;g_k^3\le0\;\;\text{for}\;\; l_3+1\le k\le m_3;\label{lem27-c}\\
&f_{l_3}^3,\;g_{l_1+1}^1,\;g_{l_2+1}^2\le 0\;\;\text{and}\;\;g_{l_3}^3,\;f_{l_1+1}^1,\;f_{l_2+1}^2\ge0.\label{lem27-d}
\end{align}
\end{subequations}
\end{enumerate}
\end{lemma}
\begin{proof}
(i) Suppose to the contrary that (i) holds. By Lemma \ref{impident},
\begin{equation}\label{mathcalR1}
	\begin{split}
		\mathcal R:=&\frac{1}{d_1}\sum_{i=1}^2\sum_{k=1}^{l_i}h_k^if^i_k\left(\f{d_1+q_1}{d_1}\right)^{k-1}+\frac{1}{d_1}\sum_{k=l_3+1}^{m_3}h_k^if^3_k\left(\f{d_1+q_1}{d_1}\right)^{k-m_3-1}\\
=&\left(f_{l_3}^3v_{l_3}^3-g_{l_3}^3u_{l_3}^3\right)\left(\f{d_1+q_1}{d_1}\right)^{l_3-m_3}-
\sum_{i=1}^2\left(f^i_{l_i+1}v^i_{l_i}-g^i_{l_i+1}u^i_{l_i}\right)\left(\f{d_1+q_1}{d_1}\right)^{l_i}\\\leq&0,
	\end{split}
\end{equation}
where we have used \eqref{lem25-c} in the last step, and $\left\{h^i_k\right\}_{(i,k)\in P^*}$ is defined in \eqref{3.3-2}.

If $(d_2,q_2)\in\mathcal S_1$ with $d_2=d_1$,
then
$$h_k^i<0\;\;\text{for}\;\;1\leq k\leq l_i\;\;\text{with}\;\; i=1,2\;\;\text{and}\;\;l_3+1\leq k\leq m_3\;\;\text{with}\;\; i=3.$$ This combined with
\eqref{lem25-a}-\eqref{lem25-b} yields $\mathcal R\geq0$, and consequently, $\mathcal R=0$.

If $(d_2,q_2)\in\mathcal S_1$ with $d_2\ne d_1$, then $d_2<d_1$ and by \cite[Lemma 2.4]{13-Zhou-2016},
\begin{equation}\label{whos}
	\ds\f{q_1-q_2}{d_1-d_2}\ge\ds\f{q_2}{d_2},
\end{equation}
which yields
\begin{equation}\label{le2.3-s1}
h^i_k=(d_1-d_2)(v^i_{k-1}-v^i_k)-(q_1-q_2)v^i_k\le \frac{d_1-d_2}{d_2}g_k^i.
\end{equation}
This combined with \eqref{lem25-a}-\eqref{lem25-b} implies that $\mathcal R\geq0$, and consequently, $\mathcal R=0$.

For each of the above cases, we have
\begin{subequations}\label{le2.3-s2}
\begin{align}
&f_{l_3}^3=g_{l_3}^3=f_{l_1+1}^1=g_{l_1+1}^1=f_{l_2+1}^2=g_{l_2+1}^2=0,\\
&f^i_kg^i_k=0\;\;\text{for}\;\; 1\leq k\leq l_i\;\;\text{with}\;\; i=1,2\;\;\text{and}\;\; l_3+1\leq k\leq m_3\;\;\text{with}\;\;i=3.
\end{align}
\end{subequations}
This combined with \eqref{steady} implies that
$f_{k}^i=g_{k}^i=0$ for  $1\leq k\leq l_i+1$ with $i=1,2$ and $l_3\leq k\leq m_3$ with $i=3$. Therefore, $f_1^1=g_1^1=f_1^2=g_1^2=0$, and consequently, $(u_{1}^3,\cdots,u_{m_3}^3,v_{1}^3,\cdots,v_{m_3}^3)$ is a positive equilibrium of \eqref{App1} with $n=m_3$, which contradicts Proposition \ref{lem-A2} (see Appendix). Therefore, (i) cannot occur.

(ii) Now we prove (ii), while (iii) can be studied in a similar manner.
Suppose to the contrary that (ii) occurs.
If $\tilde{l}_1 \ge l_2$,
then
\eqref{lem23} holds with $l_1=\tilde{l}_1$, which contradicts Lemma \ref{canot2} (i).
Then we consider the case $\tilde{l}_1<l_2$.

\emph{Claim 1:} $f_k^1+f_k^2< 0$ for each $k=1,\cdots,\tilde{l}_1$.

\noindent\emph{Proof of Claim:}
Since $(1,\tilde{l}_1)\in P_o\cup P_u$, it follows from \eqref{steady-a} and \eqref{lem26-a} that
\begin{equation*}
u^1_{\tilde{l}_1}+v^1_{\tilde{l}_1}\geq r,
\end{equation*}
and
\begin{equation*}
u^3_{m_3}>u^1_1>\cdots>u^1_{\tilde{l}_1}\;\; \;\text{and}\;\;\; v^3_{m_3}>v^1_1>\cdots>v^1_{\tilde{l}_1},
\end{equation*}
which yields
\begin{equation}\label{3.9}
	u^3_{m_3}+v^3_{m_3}>r \;\;\; \text{and}\;\;\;u^1_{k}+v^1_{k}>r\;\;\;
	\text{for}\;\;\;k=1,\cdots,\tilde{l}_1-1.
\end{equation}
Since $\tilde{l}_1<l_2\le m_2$, we see from Lemma \ref{priori} (ii) that
\begin{equation}\label{g2k}
g^2_k<0 \;\;\text{for}\;\; k=1,\cdots,\tilde{l}_1.
\end{equation}
In addition, by (\ref{lem26-a}) and (\ref{lem26-b}), we have
\begin{equation}\label{3.11}
	\begin{split}
		&u^1_{k}\leq \left(\frac{d_1}{d_1+q_1}\right)^{k}u_{m_3}^3\le u^2_{k}\;\;\text{for}\;\; k=1,\cdots,\tilde{l}_1, \\
		&v^1_{k}\leq \left(\frac{d_1}{d_1+q_1}\right)^{k}v_{m_3}^3<v^2_{k}\;
		\;\text{for}\;\; k=1,\cdots,\tilde{l}_1,
	\end{split}
\end{equation}
which yields
\begin{equation}\label{2.33}
u^1_{k}+v^1_{k}< u^2_{k}+v^2_{k}\;\;\text{for}\;\; k=1,\cdots,\tilde{l}_1.
\end{equation}

Suppose to the contrary that $f_1^1+f_1^2\geq0$. Then, by \eqref{steady-c} and \eqref{3.9}, we have $f^3_{m_3}>0$, which contradicts (\ref{lem26-c}). Thus, $f_1^1+f_1^2<0$. By induction, it suffices to show that if $f_k^1+f_k^2<0$ for $k=1,\cdots,k_0$ with $1\leq k_0\le \tilde{l}_1$, then $f_{k_0+1}^1+f_{k_0+1}^2<0$.
By \eqref{steady-a},
\begin{equation*}
	(f_{k_0}^1+f_{k_0}^2)-(f_{k_0+1}^1+f_{k_0+1}^2)=
	-u^1_{k_0}(r-u^1_{k_0}-v^1_{k_0})-u^2_{k_0}(r-u^2_{k_0}-v^2_{k_0}).
\end{equation*}
Suppose to the contrary that $f_{k_0+1}^1+f_{k_0+1}^2\geq0$. Noticing that $f_{k_0}^1+f_{k_0}^2<0$, we see that at least
one of the two inequalities $u^1_{k_0}+v^1_{k_0}<r$ and $u^2_{k_0}+v^2_{k_0}<r$ holds. This combined with \eqref{2.33} yields $u^1_{k_0}+v^1_{k_0}<r$,
which contradicts \eqref{3.9}. This proves the claim.

\emph{Claim 2:} Define
$$\mathcal F_k:=h^1_kf^1_k+h^2_kf^2_k\;\;\text{for}\;\;k=1,\cdots,\tilde{l}_1,$$
where $\{h^i_k\}$ is defined in \eqref{3.3-2}. Then $\mathcal F_k>0$ for each $k=1,\cdots,\tilde{l}_1$.

\noindent\emph{Proof of Claim:}
We first show that
\begin{equation}\label{hkin}
h^1_k> h^2_k \;\;\text{for}\;\;k=1,\cdots,\tilde{l}_1,
\end{equation}
where $\{h_k^i\}$ is defined in \eqref{3.3-2}.
If $d_1=d_2$, we see from \eqref{3.11} that \eqref{hkin} holds.
Now we consider the case $d_1\ne d_2$.
By \eqref{3.11} again,
\begin{equation*}
	h^1_1
	=(d_1-d_2)(v^3_{m_3}-v^1_1)-(q_1-q_2)v^1_1
	>(d_1-d_2)(v^3_{m_3}-v^2_1)-(q_1-q_2)v^2_1
	= h^2_1.
\end{equation*}
For $k=2,\cdots,\tilde{l}_1$, noticing that $g^1_{k}\ge0,\; g^2_{k}<0$, we have
\begin{equation*}
	v^1_{k-1}-v^2_{k-1}>\left(1+\dfrac{q_2}{d_2} \right)(v^1_{k}-v^2_{k})\geq\left(1+\dfrac{q_1-q_2}{d_1-d_2} \right)(v^1_{k}-v^2_{k}) ,
\end{equation*}
where we have used \eqref{whos} and \eqref{3.11} in the last step.
This implies that
$h^1_k> h_k^2$ for $k=2,\cdots, \tilde{l}_1$. Thus, \eqref{hkin} holds.

If $(d_2,q_2)\in\mathcal S_1$ with $d_2=d_1$,  we have $h^2_k<0$ for $k=1,\cdots,  \tilde{l}_1$. If $(d_2,q_2)\in\mathcal S_1$ with $d_2\ne d_1$, we see from \eqref{le2.3-s1} that
\begin{equation}\label{h2s}
h^2_k\leq \frac{d_1-d_2}{d_2}g^2_k<0\;\;\text{for}\;\;k=1,\cdots, \tilde{l}_1,
\end{equation}
where we have used \eqref{g2k} in the last step.
Then it follows from \eqref{lem26-a} and \eqref{hkin} that
\begin{equation}\label{f1s}
	\mathcal F_k=h^1_kf^1_k+h^2_kf^2_k\ge h^2_k(f^1_k+f^2_k)>0\;\;\text{for}\;\;k=1,\cdots, \tilde{l}_1,
\end{equation}
where we have used Claim 1 and \eqref{h2s} in the second step. This proves the claim.

Using similar arguments as in the proof for the sign of \eqref{h2s}, we have
\begin{equation}\label{h13s}
h_k^i\begin{cases}
<0,&d_1=d_2\\
\le \frac{d_1-d_2}{d_2}g^i_k\leq0,&d_1\ne d_2
\end{cases}
\end{equation}
for $k=\tilde{l}_1+1,\cdots,l_i$ with $i=1,2$ and $k=l_3+1\cdots,m_3$ with $i=3$.
Then it follows from Lemma \ref{impident} that
\begin{equation}\label{3.12}
	\begin{split}
		0\ge &\left(f_{l_3}^3v_{l_3}^3-g_{l_3}^3u_{l_3}^3\right)\left(\f{d_1+q_1}{d_1}\right)^{l_3-m_3}-
		\sum_{i=1}^2\left(f^i_{l_i+1}v^i_{l_i}-g^i_{l_i+1}u^i_{l_i}\right)\left(\f{d_1+q_1}{d_1}\right)^{l_i}\\
		=&\frac{1}{d_1}\sum_{k=1}^{\tilde{l}_1}\mathcal{F}_{k}\left(\f{d_1+q_1}{d_1}\right)^{k-1}+\frac{1}{d_1}\sum_{i=1}^2\sum_{k=\tilde{l}_1+1}^{l_i}h^i_kf^i_k\left(\f{d_1+q_1}{d_1}\right)^{k-1}\\
		&+\frac{1}{d_1}\sum_{k=l_3+1}^{m_3}h^3_kf^3_k\left(\f{d_1+q_1}{d_1}\right)^{k-m_3-1}>0,
		\end{split}
\end{equation}
where we have used \eqref{lem26-d} in the first step and \eqref{lem26-a}-\eqref{lem26-c}, \eqref{h13s} and Claim 2 in the last step. This leads to a contradiction.
Therefore, (ii) cannot occur.
\end{proof}

\subsection{Proof of Theorem \ref{priori2}}\label{sub33}
This section aims to prove Theorem \ref{priori2}, namely, the nonexistence of positive equilibria in model \eqref{pat-r} for
$(d_2,q_2)\in\mathcal S$.
It suffices to prove the the nonexistence of positive equilibria for
$(d_2,q_2)\in\mathcal S_1$. Indeed, if $(d_2,q_2)\in \mathcal S_2$, then
$(d_1,q_1)\in\tilde{\mathcal S}_1$, where
\begin{equation*}
\tilde{\mathcal S}_{1}:=\left\{(d,q):\; 0<d \le d_2,0<q\le \ds\f{q_2}{d_2}d,(d,q)\ne(d_2,q_2)\right\}.
\end{equation*}
Note that the nonlinear terms of model \eqref{pat-r} are symmetric. If the nonexistence of positive equilibria for
$(d_2,q_2)\in\mathcal S_1$ is proven, the nonexistence of positive equilibria for $(d_2,q_2)\in\mathcal S_2$ can be derived
by interchanging the equations satisfied by $u_k^i$ and $v_k^i$ in model \eqref{pat-r}. 

We now focus on the case where $(d_2,q_2)\in\mathcal S_1$.
Suppose to the contrary that model \eqref{pat-r} admits a positive equilibrium $(\bm u,\bm v)$. We will derive a contradiction for each of the following three cases:
\begin{equation*}
\begin{split}
&{\rm(i)}\;\;f^3_{m_3}\le0,\;f^2_1\le0;\;\;{\rm(ii)}\;\;f^3_{m_3}\le0,\;f^2_1>0;\;\;{\rm(iii)}\;\;f^3_{m_3}>0.
\end{split}
\end{equation*}
To proceed, we first introduce some preliminary claims that will be utilized later.

\emph{Claim 1:} If $f^3_{m_3}\le0$, then $f^3_k\le0$ for $k=1,\cdots,m_3-1$.

\noindent\emph{Proof of Claim:} Note from \eqref{fg} that $f^3_1=0$. If the claim is not true, then
there exists two constants $k_*,k^*$ with $1\le k_*<k^*<m_3$ such that
$$f_k^3>0 \;\;\text{for}\;\;k=k_*+1,\cdots,k^*.$$
This combined with Lemma \ref{priori} (i) implies that $g_k^3\ge0$ for $k=k_*+1,\cdots,k^*$.
Then \eqref{lem21} holds with $i=3$, $l_*=k_*$ and $l^*=k^*$, which contradicts Lemma \ref{canot}. This proves the claim.

\emph{Claim 2:} If $f^3_{m_3}>0$, then $g^3_{m_3}>0$ and
\begin{equation}\label{2.36}
u^3_{m_3}+v^3_{m_3}<r.
\end{equation}

\noindent\emph{Proof of Claim:}
By Lemma \ref{priori} (i), we have $g^3_{m_3}>0$ if $m_3>2$. If $m_3=2$, noticing that $f^3_1=g^3_1=0$, we see from \eqref{steady-a} and \eqref{steady-b} that
${f_2^3}/{u^3_1}={g_2^3}/{v^3_1}$, which also implies that $g^3_{m_3}>0$.
Since $f_1^3=g_1^3=0$, it follows that
\begin{equation*}
	k_3:=\max\{1\le k\le m_3:\min\{f^3_k,g^3_k\}\leq 0\}
\end{equation*}
is well-defined with $1\leq k_3<m_3$, and  consequently, $(3,k_3)\in P_o\cup P_d$. Then, by  \eqref{steady-a} and \eqref{steady-b} again, we have $u^3_{k_3}+v^3_{k_3}\leq r$. Furthermore, by the definition of $k_3$,
\begin{equation*}
	u^3_{k_3}>u^3_{k_3+1}>\cdots>u^3_{m_3}\;\;\text{and}\;\;
	v^3_{k_3}>v^3_{k_3+1}>\cdots>v^3_{m_3},
\end{equation*}
which yields $u^3_{m_3}+v^3_{m_3}<r$. Therefore, the claim is true.

\emph{Claim 3:} For each $i=1,2$, if $f^i_1\le0$, then $g_{k}^i\le0$ for $k=1,\cdots,j_i+1$,
where
\begin{equation}\label{upper2}
j_i:=\begin{cases}
\min I_i-1, &\text{if}\;\;I_i\not=\emptyset\\
m_i,&\text{if}\;\;I_i=\emptyset
\end{cases}\;\;\text{with}\;\;I_i:=\{1\le k\le m_i+1:f^i_k>0\},
\end{equation}
and $1\le j_i\le m_i$.

\noindent\emph{Proof of Claim:}
If $I_i=\emptyset$, then $j_i=m_i$ and $f_k^i\le0$ for $k=1,\cdots,m_i+1$. This combined with
 Lemma \ref{priori} (ii) implies that the claim holds with $j_i=m_i$.
If $I_i\not=\emptyset$, by Lemma \ref{priori} (ii) again, we have
$g_{k}^i<0$ for $k=1,\cdots,j_i$, and it suffices to show that
 $g_{j_i+1}^i\le0$. Suppose to the contrary that $g_{j_i+1}^i>0$. Recall that $f^i_{m_i+1}=g^i_{m_i+1}=0$.
 Then $$ k_i:=\min\left\{k>j_i+1:\min\{f_k^i,g_k^i\}\le0\right\}$$ is well-defined with $j_i+1<k_i\le m_i+1$.
Then \eqref{lem21} holds with $l_*=j_i$ and $l^*=k_i-1$, which contradicts Lemma \ref{canot}. Therefore, the claim holds.

\emph{Claim 4:} For each $i=1,2$, if $f^i_1,g^i_1>0$, then the sequence $\{f^i_k\}_{k=1}^{j_i^{\ast}}$ changes signs at most once, where
\begin{equation}\label{3.13}
j_i^{\ast}:=\min\left\{1\le k\le m_i+1:f^i_k\ge0,g^i_k\le0\right\}-1,
\end{equation}
and $1\le j_i^{\ast}\le m_i$.

\noindent\emph{Proof of Claim:}
Since $f^i_{m_i+1}=g^i_{m_i+1}=0$, it follows that $j_i^{\ast}$ is well-defined.
Clearly, this claim holds if $j_i^{\ast}<3$. Next, we consider the case where $j_i^{\ast}\ge3$. If the claim does not hold in this case, then
\begin{equation*}
	s_1:=\min\{1\le k\le j_i^{\ast}:f_k^i<0\}\;\;\text{and}\;\;s_2:=\min\{s_1<k\le j_i^{\ast}:f_k^i>0\}
\end{equation*}
are well-defined with $1<s_1<s_2\le j_i^{\ast}$.
Define
\begin{equation*}
	s_3:=\begin{cases}
\min I_i^{\ast}-1, &\text{if}\;\;I_i^{\ast}\not=\emptyset\\
j_i^{\ast},&\text{if}\;\;I_i^{\ast}=\emptyset
\end{cases}\;\;\text{with}\;\;I_i^{\ast}:=\{s_2< k\le j_i^{\ast}:f_k^i<0\}.
\end{equation*}
By the definitions of $s_2$ and $s_3$,
we observe that
\eqref{lem21} holds with $l_*=s_2-1$ and $l^*=s_3$. This contradicts Lemma \ref{canot}, thereby proving the claim.

\emph{Claim 5:} For each $i=1,2$, if  $f_1^i,g_1^i>0$, then $ u^i_1+ v^i_1\ge r$.

\noindent\emph{Proof of Claim:}
Suppose to the contrary that
$r>u^i_1+v^i_1$. Note that $(i,k)\in P_o\cup P_u$ for $k=1,\cdots,m_i$ and $i=1,2$. Then, by
	\eqref{steady-a}-\eqref{steady-b} and induction,
	\begin{equation}\label{scont1}
		 f^i_{k},\;g^i_{k}>0\;\;\text{for}\;\;k=2,\cdots,m_i+1,
	\end{equation}
which contradicts $f^i_{m_i+1}=g^i_{m_i+1}=0$ (see \eqref{fg}).
This proves the claim.

Now we derive a contradiction for each of the cases (i)-(iii).

(i) For this case, $f^3_{m_3}\le0$ and $f^2_1\le0$.
It follows from Claim 1 that
\begin{equation}\label{i1-s1}
f_k^3\le0\;\;\text{for}\;\;1\le k\le m_3.
\end{equation}
Define
\begin{equation}\label{lower}
j_3=\begin{cases}
\max I_3, &\text{if}\;\;I_3\not=\emptyset\\
1,&\text{if}\;\;I_3=\emptyset
\end{cases}\;\;\text{with}\;\;I_3:=\{1\le k\le m_3:g^3_k>0\}.
\end{equation}
Then the following discussion is divided into three cases:
\begin{equation*}
{\rm (A_1)}\;\; f_1^1\le0;\;\; {\rm (A_2)}\;\; f_1^1>0,\;g_1^1\le0;\;\;{\rm (A_3)}\;\; f_1^1>0,\;g_1^1>0.
\end{equation*}
For case (A$_1$), it follows from Claim 3 that $j_i$ is well-defined in \eqref{upper2} with each $1\le j_i\le m_i$ for $i=1,2$. Furthermore,
\eqref{lem25} holds with $l_3=j_3$ and $l_i=j_i$ $(i=1,2)$, which contradicts Lemma \ref{complex} (i).

For case (A$_2$), by Claim 3 again, $j_2$ is well-defined in \eqref{upper2} with $1\le j_2\le m_2$, and \eqref{lem25} holds with $l_3=j_3,l_1=0,l_2=j_2$, which contradicts Lemma \ref{complex} (i) again.

For case (A$_3$), it follows from Claims 3-4 that $j_2$ is well-defined in \eqref{upper2} with $1\le j_2\le m_2$, $j_1^\ast$ is well-defined in \eqref{3.13} with $1\leq j_1^\ast\leq m_1$, and $\{f_k^1\}_{k=1}^{j_1^\ast}$ changes signs at most once.

If $\{f_k^1\}_{k=1}^{j_1^\ast}$ does not change sign, then
\begin{equation*}
	f^1_k,\;g^1_k\ge0 \;\;\text{for}\;\; 1\le k\le j_1^\ast,
\end{equation*}
and \eqref{lem26} holds with $\tilde{l}_1=l_1=j_1^\ast$, $l_2=j_2$, and $l_3=j_3$, which contradicts  Lemma \ref{complex} (ii).

If $\{f_k^1\}_{k=1}^{j_1^\ast}$ changes sign, then
$$\hat{j}_1:=\max\{1\le k\le j_1^\ast:f^1_k\ge0\}$$ is well-defined with $1\le \hat{j}_1< j_1^\ast$. Then, by Lemma \ref{priori} (ii) and the definition of $j_1^\ast$,
\begin{equation*}
	f_k^1,\;g^1_k\ge0\;\; \text{for}\;\; 1\leq k\leq \hat{j}_1 \;\; \text{and}\;\;
	f_k^1,\;g^1_k\leq0\;\; \text{for}\;\; \hat{j}_1< k\leq j_1^\ast,
\end{equation*}
and \eqref{lem26} holds with $\tilde{l}_1=\hat{j}_1$, $l_1=j_1^\ast$, $l_2=j_2$, and $l_3=j_3$, which also contradicts  Lemma \ref{complex} (ii).

(ii) For this case, $f^3_{m_3}\leq0$ and $f^2_1>0$. Clearly, \eqref{i1-s1} also holds, and
let  $j_3$ be defined in \eqref{lower}.
Then the following discussion is divided into three cases:
\begin{equation*}
	{\rm (B_1)}\;\; f_1^1>0,\;g^1_1>0;\;\; {\rm (B_2)}\;\;f_1^1>0,\;g_1^1\le0;\;\;{\rm (B_3)}\;\; f_1^1\le0.
\end{equation*}
For case (B$_1$), noticing that $f_1^1,f_1^2>0$ and $f^3_{m_3}\leq0$, it follows from \eqref{steady-c} that $u^3_{m_3}+v^3_{m_3}<r$.
Additionally, since $f^1_1,g^1_1>0$, we observe that $u^3_{m_3}>u^1_1$ and $v^3_{m_3}>v^1_1$, which yields $u^1_{1}+v^1_{1}<r$. This contradicts Claim 5.

For case (B$_2$), if $g_1^2\le0$, then \eqref{lem25} holds with $l_3=j_3$, $l_1=0$ and $l_2=0$, which contradicts Lemma \ref{complex} (i). Next, we consider the case $g_1^2>0$. Using arguments similar to those in the proof of case (B$_1$), we see that
 $u^3_{m_3}+v^3_{m_3}<r$. Furthermore, since $f^2_1,g^2_1>0$, we have $u^3_{m_3}>u^2_1$ and $v^3_{m_3}>v^2_1$, which implies that $u^2_{1}+v^2_{1}<r$. This also contradicts Claim 5.

For case (B$_3$), by Claim 3 again, $j_1$ is well-defined in \eqref{upper2} and $1\le j_1\le m_1$.
If $g_1^2\le0$, then
\eqref{lem25} holds with $l_3=j_3$, $l_1=j_1$ and $l_2=0$, which contradicts Lemma \ref{complex} (i).
If $g_1^2>0$, it follows from $f_1^2>0$ and Claim 4 that
$\{f_k^2\}_{k=1}^{j_2^{\ast}}$ changes signs at most once, where  $1\le j_2^{\ast}\le m_2$ is defined in \eqref{3.13}.
Then using arguments similar to those in the proof of case (A$_3$), we can obtain a contraction by Lemma \ref{complex} (iii).

(iii) For this case, $f^3_{m_3}>0$. It follows from Claim 4 that $g_{m_3}^3>0$ and
\eqref{2.36} holds.
Then, by \eqref{steady-c} and \eqref{steady-d}, we have
\begin{equation}\label{2.37}
	f^1_1+f^2_1>0\;\;\text{and}\;\; g^1_1+g^2_1>0.
\end{equation}
This combined with Lemma \ref{priori} (ii) implies that
$f^1_1,g_1^1>0$ or $f^2_1,g^2_1>0$. Without loss of generality, we assume that $f^1_1,g_1^1>0$. Then
$u^3_{m_3}>u^1_1$ and $v^3_{m_3}>v^1_1$. This combined with \eqref{2.36} yields
 $u^1_1+v^1_1<r$, which contradicts Claim 5. This completes the proof.

\section{Global dynamics}
Consider the following
single species model
\begin{equation}\label{pat-single}
	\begin{cases}
		\ds\frac{{\rm d} u^i_k}{{\rm d}t}=d_1u^{i}_{k-1}-(2d_1+q_1)u^i_k+(d_1+q_1)u^{i}_{k+1}+u^i_k\left(r-u^i_k\right), &(i,k)\in P_o,\\
		\ds\frac{{\rm d} u^i_k}{{\rm d}t}=-(d_1+q_1)u^i_k+d_1u^{i}_{k-1}+u^i_k\left(r-u^i_k\right), &(i,k)\in P_u,\\
		\ds\frac{{\rm d} u^i_k}{{\rm d}t}=-d_1u^i_k+(d_1+q_1)u^{i}_{k+1}+u^i_k\left(r-u^i_k\right), &(i,k)\in P_d,\\
		\ds\frac{{\rm d} u^i_k}{{\rm d}t}=d_1u^{i}_{k-1}-(3d_1+q_1)u^i_{k}+(d_1+q_1)(u^{1}_1+u^2_1)+u^i_k\left(r-u^i_k\right),&(i,k)\in P_r,\\
		\bm u(0)=\bm u_0\ge(\not\equiv)\bm0.
	\end{cases}
\end{equation}
It follows from \cite{54-Cosner-1996,55-Li-Shuai-2010,56-Lu-Takeuchi-1993} that if the
trivial equilibrium $\bm 0$ of \eqref{pat-single} is unstable, then  model \eqref{pat-single}  admits a unique positive equilibrium, which is globally asymptotically stable.
Using similar arguments as in the proof of  \cite[Lemma 2]{37-Chen-Liu-Wu-2022}, we observe that the trivial equilibrium $\bm 0$ of \eqref{pat-single} is unstable. Consequently, model \eqref{pat-single} admits a unique positive equilibrium $\bm {\tilde u}\gg\bm 0$. As a result, model
\eqref{pat-r} has two semi-trivial equilibria $(\bm {\tilde u},\bm 0)$ and $(\bm 0,\bm {\tilde v})$, where
\begin{equation*}
\bm {\tilde u}=(\tilde u^1_{1},\cdots,\tilde u^1_{m_1},\tilde u_{1}^2,\cdots,\tilde u^2_{m_2},\tilde u^{3}_1,\cdots,\tilde u^3_{m_3})\gg\bm 0,
\end{equation*}
and
\begin{equation*}
\bm {\tilde v}=(\tilde v^1_{1},\cdots,\tilde v^1_{m_1},\tilde v_{1}^2,\cdots,\tilde v^2_{m_2},\tilde v^{3}_1,\cdots,\tilde v^3_{m_3})\gg\bm 0.
\end{equation*}

In this section, we investigate the global dynamics of model \eqref{pat-r}. Section \ref{sub21} is devoted to some properties of
semi-trivial equilibria, and their stability is studied in Section \ref{sub2.2}. In Section \ref{sub23}, we show that
competition exclusion occurs under certain condition, i.e., one of the semi-trivial equilibria is globally asymptotically stable.

\subsection{Properties of semi-trivial equilibria}\label{sub21}

Similar to \eqref{fff}, we define
\begin{equation}\label{4set}
\tilde u^1_0=\tilde u^2_0:=\tilde u^3_{m_3}, 
\end{equation}
and denote $\{\tilde f^i_k\}_{(i,k)\in P^*}$ as follows:
\begin{equation}\label{tildef}
\begin{split}
&\tilde f^i_k=\begin{cases}
d_1\tilde u_{k-1}^i-(d_1+q_1)\tilde u_{k}^{i}, &(i,k)\in P_o\cup P_u\cup P_r,\\
0,&(i,k)\in P_d\cup\{(1,m_1+1),(2,m_2+1)\},
\end{cases}
\end{split}
\end{equation}
where $P^*$ is defined in \eqref{pstar}.
Then
\begin{subequations}\label{uf}
\begin{align}
&\tilde f^i_k-\tilde f^{i}_{k+1}+\tilde u^i_k\left(r-\tilde u^i_k\right)=0,\;\;\;\;\;\;\;\;(i,k)\in P_o\cup P_u\cup P_d,\label{uf-a}\\
&\tilde f^i_{k}-\tilde f_1^1-\tilde f^2_1+\tilde u^i_{k}\left(r-\tilde u^i_{k}\right)=0,\;\;\;(i,k)\in P_r.\label{uf-b}
\end{align}
\end{subequations}
The following two results describe the properties of $\bm {\tilde u}$ and $\{\tilde f^i_k\}_{(i,k)\in P^\ast}$.
\begin{lemma}\label{fb}
Suppose that $d_1,q_1>0$, and let $\{\tilde f^i_k\}_{(i,k)\in P^*}$ be defined in \eqref{tildef}. Then the following two statements hold:
\begin{enumerate}
\item [{\rm (i)}] For each $i=1,2$, if there exists $1\le l\le m_i$ such that  $\tilde f_l^i\ge 0$, then
$\tilde u^i_l\ge r$;
\item [{\rm (ii)}]  For each $i=1,2,3$, if there exists $1\le l< m_i$ such that $\tilde{f}^i_{l}\le 0$, then $\tilde{f}^i_{k}<0$ for $k=l+1,\cdots,m_i$.
\end{enumerate}
\end{lemma}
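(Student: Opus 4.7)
The plan is to argue both parts by contradiction, exploiting the recursion
\begin{equation*}
\tilde f^i_{k+1}=\tilde f^i_k+\tilde u^i_k(r-\tilde u^i_k)
\end{equation*}
which follows immediately from \eqref{uf-a} on $P_o\cup P_u\cup P_d$, together with the upstream boundary data $\tilde f^i_{m_i+1}=0$ for $i=1,2$ and the flux-balance \eqref{uf-b} at the junction patch $(3,m_3)\in P_r$.

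For part (i), fix $i\in\{1,2\}$, assume $\tilde f^i_l\ge 0$, and toward a contradiction suppose $\tilde u^i_l<r$. The recursion then forces $\tilde f^i_{l+1}>\tilde f^i_l\ge 0$, and the identity $\tilde f^i_{l+1}=d_1\tilde u^i_l-(d_1+q_1)\tilde u^i_{l+1}$ gives $\tilde u^i_{l+1}<\frac{d_1}{d_1+q_1}\tilde u^i_l<\tilde u^i_l<r$. A straightforward induction on $k=l,\dots,m_i-1$ propagates both inequalities, yielding $\tilde f^i_{m_i}>0$ and $\tilde u^i_{m_i}<r$. But \eqref{uf-a} at $(i,m_i)\in P_u$ together with $\tilde f^i_{m_i+1}=0$ reduces to $\tilde f^i_{m_i}=\tilde u^i_{m_i}(\tilde u^i_{m_i}-r)<0$, a contradiction.

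For part (ii), assume $\tilde f^i_l\le 0$ with $1\le l<m_i$ and, for contradiction, let $k_0$ be the smallest index in $\{l+1,\dots,m_i\}$ with $\tilde f^i_{k_0}\ge 0$. A short case analysis---distinguishing $k_0>l+1$ (where minimality forces $\tilde f^i_{k_0-1}<0$ strictly) from $k_0=l+1$ (where one further treats $\tilde u^i_l<r$ and the tied case $\tilde u^i_l=r$)---shows that in every case $\tilde u^i_{k_0}<r$: the recursion at $k_0-1$ gives $\tilde u^i_{k_0-1}(r-\tilde u^i_{k_0-1})\ge 0$, so $\tilde u^i_{k_0-1}\le r$, and then $\tilde f^i_{k_0}\ge 0$ combined with the definition of $\tilde f^i_{k_0}$ forces $\tilde u^i_{k_0}\le\frac{d_1}{d_1+q_1}\tilde u^i_{k_0-1}<r$.

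For $i\in\{1,2\}$ this already contradicts part (i) applied at $k_0$, which would require $\tilde u^i_{k_0}\ge r$. For $i=3$ there is no direct on-segment analog of (i), so I would instead propagate the sign information forward: the recursion on $P_o$ inductively yields $\tilde f^3_k>0$ and $\tilde u^3_k<r$ for $k=k_0+1,\dots,m_3$. Plugging $k=m_3$ into \eqref{uf-b} then gives $\tilde f^1_1+\tilde f^2_1=\tilde f^3_{m_3}+\tilde u^3_{m_3}(r-\tilde u^3_{m_3})>0$, so without loss of generality $\tilde f^1_1>0$. Part (i) for $i=1$ yields $\tilde u^1_1\ge r$, and substituting $\tilde u^1_0=\tilde u^3_{m_3}$ into $\tilde f^1_1=d_1\tilde u^3_{m_3}-(d_1+q_1)\tilde u^1_1$ forces $\tilde u^3_{m_3}>\frac{d_1+q_1}{d_1}r>r$, contradicting $\tilde u^3_{m_3}<r$. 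The main hurdle is precisely this $i=3$ case of (ii): without an on-segment analog of (i) on the main river, one must push the sign information all the way to the junction patch $(3,m_3)$ and then exploit the flux-balance \eqref{uf-b} together with part (i) applied on one of the two tributaries to close the argument.
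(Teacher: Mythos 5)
Your proof is correct and follows essentially the same route as the paper: both parts rest on the recursion $\tilde f^i_{k+1}=\tilde f^i_k+\tilde u^i_k(r-\tilde u^i_k)$ from \eqref{uf-a}, propagated by induction against the upstream boundary data $\tilde f^i_{m_i+1}=0$, and for segment $3$ both arguments push the positivity to the junction, invoke the flux balance \eqref{uf-b} to find a tributary with $\tilde f^i_1>0$, and contradict part (i) there. The only cosmetic differences are that you stop the induction in (i) at $k=m_i$ rather than $m_i+1$, and in (ii) for $i=1,2$ you cite part (i) at $k_0$ instead of rerunning the same induction.
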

\begin{proof}
(i) Suppose to the contrary that $r>\tilde u^i_l$. Note that $(i,k)\in P_o\cup P_u$ for $k=l,\cdots,m_i$ and $i=1,2$. Then by
 \eqref{uf-a} and induction,
\begin{equation}\label{scont}
\tilde f^i_{k}>0\;\;\text{for}\;\;k=l+1,\cdots,m_i+1,
\end{equation}
which contradicts  $\tilde f^i_{m_i+1}=0$ (see \eqref{tildef}). Therefore, (i) holds.

(ii) By contradiction,
$$k^*:=\min\{l+1\le k\le m_i: \tilde f^i_k\ge0\}$$
is well-defined with $l+1\le k^*\le m_i$.
Then we obtain a contradiction for each of $i=1,2,3$.

If $i=1,2$, then $(i,k)\in P_o\cup P_u$ for $l\le k\le m_i$.
By \eqref{uf-a} and induction,  we have
\begin{subequations}\label{fies}
\begin{align}
&r>\tilde u^i_k\;\;\text{for}\;\;k=k^*,\cdots,m_i,\label{fies-a}\\
&\tilde f^i_{k}>0\;\;\text{for}\;\;k=k^*+1,\cdots,m_i+1,\label{fies-b}
\end{align}
\end{subequations}
which also contradicts  $\tilde f^i_{m_i+1}=0$.

If $i=3$, then $(i,k)\in P_o$ for $l\le k<m_i$.  By \eqref{uf-a} and induction again, we see that \eqref{fies} holds except for $k=m_i+1$, which yields
\begin{equation}\label{fu}
\tilde f^3_{m_3}>0\;\;\text{and}\;\; r>\tilde u_{m_3}^3.
\end{equation}
Since $(3,m_3)\in P_r$, it follows from
\eqref{uf-b} and \eqref{fu} that at least one of $\tilde f_1^1$ and $\tilde f^2_1$ is positive.
Without loss of generality, we assume that $\tilde f_1^1>0$. Then, by the definition of $\tilde f_1^1$ and \eqref{fu}, we see that $r>\tilde{u}^{1}_1$, which contradicts (i).
This completes the proof.
\end{proof}
\begin{lemma}\label{u}
Suppose that $d_1,q_1>0$, and let $\{\tilde f^i_k\}_{(i,k)\in P^*}$ be defined in \eqref{tildef}. Then
the following two statements hold:
\begin{enumerate}
\item [{\rm (i)}] $\tilde f^i_k<0$ for $(i,k)\in M:= \{(i,k):(i,k)\in P_o\cup P_r\cup P_u,\;i=2,3\}$.
That is,
\begin{subequations}\label{pp1}
\begin{align}
&\tilde f^3_k<0 \;\;\text{for}\;\; k=2,\cdots,m_3,\label{pp1-a}\\
&\tilde f^2_k<0 \;\;\text{for}\;\; k=1,\cdots,m_2.\label{pp1-b}
\end{align}
\end{subequations}
\item [{\rm (ii)}]
$\tilde f^1_k+\tilde f^2_k<0$  and $\tilde f^1_k\tilde u^1_k+\tilde f^2_k\tilde u^2_k<0$ for $k=1,\cdots,m_1$.
\end{enumerate}
\end{lemma}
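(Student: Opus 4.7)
For part (i), the plan is to reduce, via Lemma~\ref{fb}(ii), to showing the two base cases $\tilde f^3_2<0$ and $\tilde f^2_1<0$; each then propagates upstream via Lemma~\ref{fb}(ii). For $\tilde f^3_2<0$: \eqref{uf-a} at $(3,1)\in P_d$ with $\tilde f^3_1=0$ gives $\tilde f^3_2=\tilde u^3_1(r-\tilde u^3_1)$, so the statement reduces to $\tilde u^3_1>r$. Arguing by contradiction, suppose $\tilde u^3_1\le r$. Iterating the two recursions $\tilde f^3_{k+1}=\tilde f^3_k+\tilde u^3_k(r-\tilde u^3_k)$ from \eqref{uf-a} and $\tilde u^3_{k+1}=(d_1\tilde u^3_k-\tilde f^3_{k+1})/(d_1+q_1)$ from the definition of $\tilde f^3_k$, one inductively obtains $\tilde u^3_k<r$ and $\tilde f^3_k\ge 0$ throughout segment~3. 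The junction equation \eqref{uf-b} at $(3,m_3)\in P_r$ then forces $\tilde f^1_1+\tilde f^2_1>0$, so (WLOG) $\tilde f^1_1>0$. By Lemma~\ref{fb}(i), $\tilde u^1_1\ge r$, and the definition of $\tilde f^1_1$ yields $\tilde u^3_{m_3}\ge \frac{d_1+q_1}{d_1}\tilde u^1_1>r$, contradicting the derived $\tilde u^3_{m_3}<r$.

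For $\tilde f^2_1<0$, I again argue by contradiction, assuming $\tilde f^2_1\ge 0$. Lemma~\ref{fb}(i) gives $\tilde u^2_1\ge r$, and the definition of $\tilde f^2_1$ forces $\tilde u^3_{m_3}\ge \frac{d_1+q_1}{d_1}\tilde u^2_1>r$. Combined with the already-established $\tilde f^3_{m_3}<0$, the junction equation \eqref{uf-b} yields $\tilde f^1_1+\tilde f^2_1 = \tilde f^3_{m_3}+\tilde u^3_{m_3}(r-\tilde u^3_{m_3})<0$, hence $\tilde f^1_1<0$, and the definition of $\tilde f^1_1$ then gives $\tilde u^1_1>\frac{d_1}{d_1+q_1}\tilde u^3_{m_3}\ge r$. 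Closing the contradiction requires careful propagation along segment~2: iterating \eqref{uf-a} with Lemma~\ref{fb}(i)--(ii) and the upstream boundary $\tilde f^2_{m_2+1}=0$, combined with the local max principle at the interior $P_o$ patches (where $\tilde u\le r$ at any local maximum) and at the upstream $P_u$ patch $(2,m_2)$ (where the residue of the equilibrium equation at $\tilde u^2_{m_2}=r$ is $-q_1r\neq 0$), one deduces that $\tilde u^2_1\geq r$ forces $\tilde u^2\equiv r$ in segment~2, violating the $P_u$ boundary equation. This closing step is the most delicate part of the proof. Once $\tilde f^2_1<0$ is established, Lemma~\ref{fb}(ii) extends it to $\tilde f^2_k<0$ for $k=2,\ldots,m_2$.

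For part (ii), the identical contradiction argument used for $\tilde f^2_1<0$, with the roles of segments~1 and~2 swapped, also yields $\tilde f^1_1<0$; Lemma~\ref{fb}(ii) then gives $\tilde f^1_k<0$ for $k=2,\ldots,m_1$. With $\tilde f^i_k<0$ for $i=1,2$ and $k=1,\ldots,m_i$ in hand, both inequalities of part~(ii) are immediate: $\tilde f^1_k+\tilde f^2_k$ is a sum of two strictly negative numbers, and $\tilde u^1_k\tilde f^1_k+\tilde u^2_k\tilde f^2_k$ is a sum of two strictly negative products since each $\tilde u^i_k>0$.
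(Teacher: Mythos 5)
Your treatment of segment 3 is fine (though the detour through $\tilde f^3_2$ is unnecessary: since $\tilde f^3_1=0$, Lemma \ref{fb}(ii) with $l=1$ already yields \eqref{pp1-a} in one line). The first genuine gap is exactly where you flag "the most delicate part": the claim that $\tilde f^2_1\ge 0$ (hence $\tilde u^2_1\ge r$) forces $\tilde u^2\equiv r$ on segment 2 is not true, and no amount of iterating \eqref{uf-a} inside segment 2 will close the contradiction. The configuration $\tilde f^2_k\ge 0$ for all $1\le k\le m_2+1$ with $\tilde u^2_k\ge r$ strictly decreasing is internally consistent with the segment-2 equations and the upstream boundary (at $(2,m_2)\in P_u$ one gets $\tilde f^2_{m_2}=\tilde u^2_{m_2}(\tilde u^2_{m_2}-r)\ge 0$, which is no contradiction); already for $m_2=1$ the single equation $\tilde f^2_1=\tilde u^2_1(\tilde u^2_1-r)$ admits $\tilde u^2_1>r$ with $\tilde f^2_1>0$. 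The obstruction is not local to segment 2: the paper rules out $\tilde f^2_1\ge 0$ by a cross-branch comparison. Having shown $\tilde f^1_1<0$, the signs $\tilde f^1_1<0\le \tilde f^2_1$ make $(\tilde u^1_1,\dots,\tilde u^1_{m_1})$ a lower solution and $(\tilde u^2_1,\dots,\tilde u^2_{m_2})$ an upper solution of the auxiliary single-branch problem \eqref{App6}; Lemma \ref{auxilemma} (which uses $m_1\le m_2$) then gives $\tilde u^1_1\le \tilde u^2_1$, contradicting $\tilde u^2_1\le\frac{d_1}{d_1+q_1}\tilde u^3_{m_3}<\tilde u^1_1$. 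This comparison ingredient is absent from your argument and cannot be replaced by the maximum-principle heuristic you describe.

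The second, more serious gap is part (ii). Your plan is to prove $\tilde f^1_k<0$ for all $k$ by "swapping the roles of segments 1 and 2" and then conclude both inequalities trivially. But the two branches are not interchangeable: the hypothesis $m_1\le m_2$ breaks the symmetry, and Lemma \ref{auxilemma} only gives $w^1_k\le w^2_k$ with the shorter chain on the small side, so the swapped configuration $\tilde f^1_1\ge 0>\tilde f^2_1$ cannot be excluded the same way. Indeed the paper never asserts $\tilde f^1_k<0$; its proof of (ii) explicitly entertains $\tilde f^1_k>0$ (case (c$_2$)), establishes $\tilde u^1_k\le\left(\frac{d_1}{d_1+q_1}\right)^k\tilde u^3_{m_3}<\tilde u^2_k$ in that regime, and runs an induction on $k$ to prove the two combined inequalities $\tilde f^1_k+\tilde f^2_k<0$ and $\tilde f^1_k\tilde u^1_k+\tilde f^2_k\tilde u^2_k<0$ directly, using $\tilde f^1_k\tilde u^1_k+\tilde f^2_k\tilde u^2_k\le(\tilde f^1_k+\tilde f^2_k)\tilde u^2_k$ when $\tilde f^1_k\ge 0$. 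Part (ii) as stated is a strictly weaker assertion than the one you try to prove, and the stronger one is not available.
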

\begin{proof}
(i) Since $(3,1)\in P_d$, it follows from \eqref{tildef} that $\tilde f^3_1=0$. This combined with Lemma \ref{fb} (ii) implies that
\eqref{pp1-a} holds.
Then we show that
\eqref{pp1-b} holds. By Lemma \ref{fb} (ii) again, it suffices to
prove $\tilde f_1^2<0$. Suppose to the contrary that
$\tilde f_1^2\ge0$. Then we claim that $\tilde f_1^1<0$. If the claim is not true, we see from \eqref{uf-b} and \eqref{pp1-a} that
$ r>\tilde{u}^3_{m_3}>\tilde{u}^1_1,\tilde{u}^2_1$, which contradicts Lemma \ref{fb} (i). Thus, $\tilde f_1^1<0$.
Since $\tilde f_1^2\ge0$ and $\tilde f_1^1<0$, it follows that $(\tilde{u}^1_1,\cdots,\tilde{u}^1_{m_1})^T$ is a  lower solution of \eqref{App6} (see Appendix) with $n=m_1$, and $(\tilde{u}^2_1,\cdots,\tilde{u}^2_{m_2})^T$ is an upper solution of \eqref{App6} with $n=m_2$. This combined with Lemma \ref{auxilemma} implies that
\begin{equation}\label{prolast}
(\tilde{u}^1_1,\cdots,\tilde{u}^1_{m_1})\le(\tilde{u}^2_1,\cdots,\tilde{u}^2_{m_1}).
\end{equation}
By $\tilde f_1^2\ge0$, $\tilde f_1^1<0$ and the definitions of $\tilde f_1^1$ and $\tilde f_1^2$, we see that $\tilde{u}^2_1<\tilde{u}_1^1$, which contradicts \eqref{prolast}.
Therefore, $\tilde f_1^2<0$. This completes the proof for (i).

(ii) We first show that, for $k=1$,
\begin{equation}\label{deriv}
\tilde f^1_k+\tilde f^2_k<0\;\;\text{and}\;\;\tilde f^1_k\tilde u^1_k+\tilde f^2_k\tilde u^2_k<0.
\end{equation}
Suppose to the contrary that $\tilde f^1_1+\tilde f^2_1\geq0$. It follows from \eqref{uf-b} and \eqref{pp1} that
$\tilde f^1_1>0$, $\tilde f^2_1<0$ and $r>\tilde {u}_{m_3}^3>\tilde{u}^1_1$, which contradicts Lemma \ref{fb} (i). Therefore, $\tilde f^1_1+\tilde f^2_1<0$.
Noting that $\tilde f^2_1<0$, we see that $\tilde f^1_1\tilde u^1_1+\tilde f^2_1\tilde u^2_1<0$ if
$\tilde f^1_1<0$.
	If $\tilde f^1_1\geq0$, by the definitions of $\tilde f^1_1$ and $\tilde f^2_1$, we have $\tilde u^1_1<\tilde u^2_1$, which also yields
	$$\tilde f^1_1\tilde u^1_1+\tilde f^2_1\tilde u^2_1\leq\left( \tilde f^1_1+\tilde f^2_1\right)\tilde u^2_1<0.$$
Therefore, \eqref{deriv} holds for $k=1$.
	
By induction, it suffices to show that if \eqref{deriv} holds for $k=1,\cdots,k_0$ with $1\le  k_0<m_1$, then \eqref{deriv} holds for $k=k_0+1$. The following proof is divided into two cases:
\begin{enumerate}
\item [ (c$_1$)] there exists $1\le l\le  k_0$ such that $\tilde{f}^1_l\le 0$;
\item [ (c$_2$)] $\tilde{f}^1_k>0$ for all $1\le k\le k_0$.
\end{enumerate}
For	case (c$_1$), it follows from Lemma \ref{fb} (ii) that
$\tilde{f}^1_k<0$ for $l+1\le k\le m_1$. This combined with \eqref{pp1-b} implies that
\eqref{deriv} holds for $k=k_0+1$.
	
For case (c$_2$), noticing that $\tilde{f}^2_k<0$ for $1\leq k\leq k_0$, we see from the definition of $\{\tilde f^i_k\}_{k=1}^{m_i}\; (i=1,2)$  that
\begin{equation}\label{u1u2}
\tilde{u}^1_k\leq \left(\frac{d_1}{d_1+q_1} \right)^k\tilde{u}^3_{m_3}<\tilde{u}^2_k\;\;\text{for}\;\;k=1,\cdots,k_0.
\end{equation}
By \eqref{uf-a},
\begin{equation*}
\left(\tilde{f}^1_{k_0}+\tilde{f}^2_{k_0}\right)-\left(\tilde{f}^1_{k_0+1}+\tilde{f}^2_{k_0+1}\right)=-\tilde{u}^1_{k_0}(r-\tilde{u}^1_{k_0})-\tilde{u}^2_{k_0}(r-\tilde{u}^2_{k_0}).
\end{equation*}
Suppose to the contrary that $\tilde f^1_{k_0+1}+\tilde f^2_{k_0+1}\geq0$. Then at least one of the two inequalities $\tilde{u}^1_{k_0}<r$ and $\tilde{u}^2_{k_0}<r$ holds. This combined with \eqref{u1u2} yields $\tilde{u}^1_{k_0}<r$. By \eqref{pp1-b}, $\tilde f^2_{k_0+1}<0$. This combined with $\tilde f^1_{k_0+1}+\tilde f^2_{k_0+1}\geq0$ yields $\tilde f^1_{k_0+1}>0$, and consequently, $r>\tilde{u}^1_{k_0}>\tilde{u}^1_{k_0+1}$, which contradicts Lemma \ref{fb} (i). Thus, 	$\tilde f^1_{k_0+1}+\tilde f^2_{k_0+1}<0$.

Finally, we show that $\tilde f^1_{k_0+1}\tilde u^1_{k_0+1}+\tilde f^2_{k_0+1}\tilde u^2_{k_0+1}<0$.
Clearly, this holds if $\tilde{f}^1_{k_0+1}<0$.
If $\tilde f^1_{k_0+1}\geq0$, then using similar arguments as in the proof of \eqref{u1u2}, we have $\tilde{u}^1_{k_0+1}<\tilde{u}^2_{k_0+1}$, which yields
$$\tilde f^1_{k_0+1}\tilde u^1_{k_0+1}+\tilde f^2_{k_0+1}\tilde u^2_{k_0+1}\leq\left( \tilde f^1_{k_0+1}+\tilde f^2_{k_0+1}\right)\tilde u^2_{k_0+1}<0.$$
This completes the proof of (ii).
\end{proof}

\subsection{Stability of semi-trivial equilibria}\label{sub2.2}
The stability of the semi-trivial equilibrium $(\bm {\tilde u},\bm 0)$ is determined by the sign of $\lambda_1(d_2,q_2)$:
$(\bm {\tilde u},\bm 0)$ is locally asymptotically stable if $\lambda_1(d_2,q_2)<0$ and unstable if $\lambda_1(d_2,q_2)>0$,
 where
$\lambda_1(d,q)$ is the principal eigenvalue of the following eigenvalue problem:
\begin{equation}\label{2.1}
\begin{cases}
	d\phi^{i}_{k-1}-(2d+q)\phi^i_k+(d+q)\phi^{i}_{k+1}+\left(r-\tilde{u}^i_k\right)\phi^i_k=\lambda \phi^i_k, &(i,k)\in P_o,\\
    -(d+q)\phi^i_k+d\phi^{i}_{k-1}+\left(r-\tilde{u}^i_k\right)\phi^i_k=\lambda \phi^i_k, &(i,k)\in P_u,\\
	-d\phi^i_k+(d+q)\phi^{i}_{k+1}+\left(r-\tilde{u}^i_k\right)\phi^i_k=\lambda \phi^i_k, &(i,k)\in P_d,\\
	d\phi^{i}_{k-1}-(3d+q)\phi^i_{k}+(d+q)(\phi^{1}_1+\phi^2_1)+\left(r-\tilde{u}^i_k\right)\phi^i_k=\lambda \phi^i_k,&(i,k)\in P_r.
	\end{cases}
\end{equation}
We first briefly discuss the existence and uniqueness of the principal eigenvalue $\lambda_1(d,q)$.
Let $A=(a_{ij})$ be a real-valued $m\times m$
square matrix, where $m$ is a positive integer, and let $\sigma(A)$ denote the set of all eigenvalues of $A$. The spectral bound
$s(A)$ of $A$ is defined as
\begin{equation*}
	s(A)=\max\{\text{Re}(\lambda):\lambda\in\sigma(A)\}.
\end{equation*}
It follows from Perron-Frobenius Theorem \cite{60-Li-Schneider-2002} that,
if $A$ is an irreducible essentially nonnegative matrix, then $s(A)$ is an eigenvalue
of $A$. Moreover,  $s(A)$ the unique eigenvalue associated with a positive eigenvector, known as the principal eigenvalue.

Let
$$\bm \phi=(\phi^1_{1},\cdots,\phi^1_{m_1},\phi_{1}^2,\cdots,\phi^2_{m_2},\phi^{3}_1,\cdots,\phi^3_{m_3}),$$
and
$$\bm R=\text{diag}(r-\tilde{u}^1_{1},\cdots,r-\tilde{u}^1_{m_1},r-\tilde{u}_{1}^2,\cdots,r-\tilde{u}^2_{m_2},r-\tilde{u}^{3}_1,\cdots,r-\tilde{u}^3_{m_3}).$$
Then \eqref{2.1} can be can be rewritten as
\begin{equation*}
	(dD+qQ+R)\bm \phi^T=\lambda \bm \phi^T,
\end{equation*}
where $D,Q$ are $m\times m$ square matrix with $m=m_1+m_2+m_3$.
For instance, taking $m_1=1$, $m_2=2$ and $m_3=2$, we have
\begin{equation*}
	D=\left( \begin{matrix}
		-1&0&0&0&1\\
		0&-2&1&0&1\\
		0&1&-1&0&0\\
		0&0&0&-1&1\\
		1&1&0&1&-3
	\end{matrix}\right) ,
	\;
	Q=\left( \begin{matrix}
		-1&0&0&0&0\\
		0&-1&0&0&0\\
		0&0&-1&0&0\\
		0&0&0&0&1\\
		1&1&0&0&-1
	\end{matrix}\right).
\end{equation*}
It is straightforward to verify that $dD+qQ+R$ is  irreducible and essentially nonnegative. This implies the
existence and uniqueness of the principal eigenvalue $\lambda_1(d,q)$.

We now compute the derivative of $\lambda_1(d,q)$ with respect to $q$.
\begin{lemma}\label{lm2.3}
Suppose that $d_1,q_1>0$. Then
\begin{equation}\label{2.2}
\frac{\partial \lambda_1}{\partial q}\left(d_1,q_1 \right)<0.
\end{equation}
\end{lemma}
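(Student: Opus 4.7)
The plan is to exploit the fact that $\bm{\tilde u}$ -- the positive equilibrium of \eqref{pat-single} -- also satisfies \eqref{2.1} at $(d,q)=(d_1,q_1)$ with $\lambda=0$; that is, $L(d_1,q_1)\bm{\tilde u}=\bm 0$, where $L(d,q)$ denotes the matrix underlying \eqref{2.1}. This matrix is quasi-monotone and irreducible (the Y-network patches are all connected), so a Perron-Frobenius argument gives that $\lambda_1(d_1,q_1)=0$ is a simple principal eigenvalue with positive right eigenvector $\bm{\tilde u}$ and a positive left eigenvector $\bm\psi\gg\bm 0$. The standard perturbation identity then reduces \eqref{2.2} to showing
\[
\mathcal N:=\bigl\langle \partial_q L(d_1,q_1)\,\bm{\tilde u},\,\bm\psi\bigr\rangle<0,
\]
since $\langle \bm{\tilde u},\bm\psi\rangle>0$. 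Here $\partial_q L$ is the Y-network advection matrix, read off as the coefficients of $q$ in \eqref{2.1}, and $\langle\cdot,\cdot\rangle$ is the standard Euclidean inner product indexed by the patches.

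The heart of the argument is identifying $\bm\psi$ explicitly. Setting $c:=(d_1+q_1)/d_1>0$, my proposal is
\[
\psi^3_k:=c^{\,k}\,\tilde u^3_k\ \ (1\le k\le m_3),\qquad \psi^i_k:=c^{\,m_3+k}\,\tilde u^i_k\ \ (i=1,2,\ 1\le k\le m_i),
\]
the discrete analog of the exponential weight $e^{qx/d}$ that symmetrizes the adjoint of $d\,\partial_{xx}-q\,\partial_x$. Verifying $L(d_1,q_1)^{T}\bm\psi=\bm 0$ reduces, patch by patch, to the two algebraic identities $d_1c=d_1+q_1$ and $(d_1+q_1)/c=d_1$, combined with the fact that $\bm{\tilde u}$ solves \eqref{pat-single}. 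The only delicate case is the junction $(3,m_3)$: the weight $c^{m_3}$ is chosen precisely so that the adjoint junction equation -- which couples $\psi^3_{m_3}$ to $\psi^3_{m_3-1}$, $\psi^1_1$ and $\psi^2_1$ -- collapses to $c^{m_3}$ times the original junction equation for $\bm{\tilde u}$.

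With $\bm\psi$ in hand, I would compute $\mathcal N$ segment by segment using Abel summation and the key identity
\[
\tilde u^i_{k-1}-c\,\tilde u^i_k=\tilde f^i_k/d_1,
\]
which is immediate from \eqref{tildef}. Segment $3$ produces, besides its interior sum, a single near-junction cross term $+c^{m_3}\tilde u^3_{m_3}(\tilde u^1_1+\tilde u^2_1)$ coming from the junction piece of $\partial_qL$; segments $1$ and $2$ each produce an opposite boundary cross term $-c^{m_3}\tilde u^3_{m_3}\,\tilde u^i_1$ when one expresses their near-junction contribution via the convention $\tilde u^i_0=\tilde u^3_{m_3}$. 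These cross terms cancel exactly, leaving the compact identity
\[
d_1\,\mathcal N=\sum_{k=2}^{m_3}c^{\,k-1}\,\tilde u^3_k\,\tilde f^3_k+\sum_{i=1}^{2}\sum_{k=1}^{m_i}c^{\,m_3+k-1}\,\tilde u^i_k\,\tilde f^i_k,
\]
where $\tilde f^3_1=0$ has been used to start the first sum at $k=2$.

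To conclude, I apply Lemma~\ref{u}. Part~(i) gives $\tilde f^3_k<0$ for $2\le k\le m_3$ and $\tilde f^2_k<0$ for $1\le k\le m_2$, so the segment-$3$ sum is strictly negative and the portion of segment $2$ on $m_1<k\le m_2$ contributes negatively. Grouping segments $i=1$ and $i=2$ on the common range $1\le k\le m_1$, part~(ii) gives $\tilde u^1_k\tilde f^1_k+\tilde u^2_k\tilde f^2_k<0$, making this block strictly negative as well. Combining these, $\mathcal N<0$, which gives \eqref{2.2}. The main obstacle is recognizing the explicit form of $\bm\psi$ and checking that it handles the junction correctly; once this identification is in place, the Abel summation, the cancellation of junction cross terms, and the sign extraction via Lemma~\ref{u} are essentially mechanical.
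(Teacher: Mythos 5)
Your proposal is correct and is essentially the paper's own argument in adjoint language: the paper differentiates the eigenvalue problem \eqref{2.1} in $q$, cross-multiplies with the eigenfunction, and sums against the weights $\alpha^i_k=((d_1+q_1)/d_1)^{k-1}$ (segment $3$) and $((d_1+q_1)/d_1)^{m_3+k-1}$ (segments $1,2$), which is exactly your left eigenvector $\bm\psi$ divided componentwise by $\bm{\tilde u}$, and it arrives at the same identity \eqref{2.6} (up to a positive constant factor) before invoking Lemma \ref{u} in the same three-block way. The junction cross-term cancellation and the sign extraction you describe match the paper step for step.
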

\begin{proof}
Denote the  eigenvector corresponding to $\lambda_1(d,q)$ by $\bm \phi^T$ with
$$\bm \phi=(\phi^1_{1},\cdots,\phi^1_{m_1},\phi_{1}^2,\cdots,\phi^2_{m_2},\phi^{3}_1,\cdots,\phi^3_{m_3})\gg\bm 0.$$ Substituting $\lambda=\lambda_1(d,q)$ into \eqref{2.1} and taking the derivative with respect to $q$ yield
\begin{equation}\label{2.3}
	\begin{split}
		&\ds\frac{\partial \lambda_1}{\partial q} \phi^i_k+ \lambda_1 \frac{\partial\phi^i_k}{\partial q}\\	
	&=	\begin{cases}
			d\ds\frac{\partial\phi^i_{k-1}}{\partial q}-(2d+q)\ds\frac{\partial\phi^i_k}{\partial q}+(d+q)\frac{\partial\phi^i_{k+1}}{\partial q}-\phi^i_k
			+\phi^i_{k+1}+\left(r-\tilde{u}^i_k\right)\frac{\partial\phi^i_k}{\partial q}, &(i,k)\in P_o,\\
			-(d+q)\ds\frac{\partial\phi^i_k}{\partial q}+d\ds\frac{\partial\phi^i_{k-1}}{\partial q}
			-\phi^i_k+\left(r-\tilde{u}^i_k\right)\frac{\partial\phi^i_k}{\partial q}, &(i,k)\in P_u,\\
			-d\ds\frac{\partial\phi^i_k}{\partial q}+(d+q)\frac{\partial\phi^i_{k+1}}{\partial q}	
			+\phi^{i}_{k+1}+\left(r-\tilde{u}^i_k\right)\frac{\partial\phi^i_k}{\partial q}, &(i,k)\in P_d,\\
			d\ds\frac{\partial\phi^{i}_{k-1}}{\partial q}-(3d+q)\frac{\partial\phi^i_k}{\partial q}+(d+q)\left(\frac{\partial\phi^{1}_1}{\partial q}+\frac{\partial\phi^2_1}{\partial q}\right)\\
			-\phi^i_{k}+(\phi^{1}_1+\phi^2_1)
			+\left(r-\tilde{u}^i_k\right)\ds\frac{\partial\phi^i_k}{\partial q},&(i,k)\in P_r.
		\end{cases}
	\end{split}
\end{equation}	
Multiplying \eqref{2.1} with $\la=\la_1(d,q)$ and \eqref{2.3} by $\dfrac{\partial \phi^i_k}{\partial q}$ and $\phi^i_k$, respectively, and taking the difference, we have
\begin{equation}\label{2.4}
\frac{\partial \lambda_1}{\partial q} \left( \phi^i_k\right) ^2	\\
		=\begin{cases}
		-d\ds\frac{\partial \phi^i_k}{\partial q}\phi^{i}_{k-1}
		-(d+q)\frac{\partial \phi^i_k}{\partial q}\phi^{i}_{k+1}		
		+d\ds\frac{\partial\phi^i_{k-1}}{\partial q}\phi^i_k\\
		+(d+q)\ds\frac{\partial\phi^i_{k+1}}{\partial q}\phi^i_k
		-\left( \phi^i_k\right)^2 +\phi^i_k\phi^i_{k+1}, &(i,k)\in P_o,\\
		-d\ds\frac{\partial \phi^i_k}{\partial q}\phi^{i}_{k-1}
		+d\frac{\partial\phi^i_{k-1}}{\partial q}\phi^i_k-\left( \phi^i_k\right)^2
		, &(i,k)\in P_u,\\
		-(d+q)\ds\frac{\partial \phi^i_k}{\partial q}\phi^{i}_{k+1}
		+(d+q)\frac{\partial\phi^i_{k+1}}{\partial q}\phi^i_k+\phi^i_k\phi^i_{k+1}
		, &(i,k)\in P_d,\\
		-d\ds\frac{\partial \phi^i_k}{\partial q}\phi^{i}_{k-1}
		-(d+q)\frac{\partial \phi^i_k}{\partial q}(\phi^{1}_1+\phi^2_1)		
		+d\frac{\partial\phi^{i}_{k-1}}{\partial q}\phi^i_k\\
		+(d+q)\left(\ds\frac{\partial\phi^{1}_1}{\partial q}+\frac{\partial\phi^2_1}{\partial q}\right)\phi^i_k
		-\left(\phi^i_k \right)^2
		+(\phi^{1}_1+\phi^2_1)\phi^i_k,&(i,k)\in P_r.
	\end{cases}
\end{equation}	
Denote
\begin{equation*}
\alpha_k^i:=\begin{cases}
\left(\ds \frac{d+q}{d}\right)^{m_3+k-1},&i=1,2,\;k=1,\cdots,m_i,\\
\left(\ds \frac{d+q}{d}\right)^{k-1},&i=3,\;k=1,\cdots,m_i.
\end{cases}
\end{equation*}
Multiplying (\ref{2.4}) by $\alpha^i_k$ and summing them over all $(i,k)\in P$, we deduce that
 \begin{equation}\label{2.5}
 	\begin{split}
 	\frac{\partial \lambda_1}{\partial q}\sum_{i=1}^3\sum_{k=1}^{m_i} \left( \phi^i_k\right) ^2=&\sum_{k=2}^{m_3}\left(d\phi^3_{k-1}-(d+q)\phi^3_k \right)\phi^3_{k}\frac{\alpha_{k-1}^3}{d}\\
 	&+\sum_{i=1}^2\left(d\phi^3_{m_3}-(d+q)\phi^i_1 \right)\phi^i_{1}\frac{\alpha_{m_3}^3}{d}\\
 	&+\sum_{i=1}^2\sum_{k=2}^{m_i}\left(d\phi^i_{k-1}-(d+q)\phi^i_k \right)\phi^i_{k}\frac{\alpha_{k-1}^i}{d}.\\
 	\end{split}
 \end{equation}
Since $\lambda_1(d_1,q_1)=0$ and $\bm {\tilde{u}}$ is the positive eigenvector corresponding to $\lambda_1(d_1,q_1)$, we see from \eqref{2.5} that
\begin{equation}\label{2.6}
	\begin{split}
		&\frac{\partial \lambda}{\partial q}(d_1,q_1)\sum_{i=1}^3\sum_{k=1}^{m_i} \left( \tilde{u}^i_k\right) ^2=\sum_{k=2}^{m_3}\tilde{f}^3_k\tilde{u}^3_{k}\frac{(d_1+q_1)^{k-2}}{d_1^{k-1}}+\sum_{i=1}^2\sum_{k=1}^{m_i}\tilde{f}^i_k\tilde{u}^i_{k}\frac{(d_1+q_1)^{m_3+k-2}}{d_1^{m_3+k-1}}.
	\end{split}
\end{equation}
In view of $m_1\le m_2$, the desired result follows from Lemma \ref{u}.
\end{proof}
Then we show that $\lambda_1\left(d_2,q_2\right)\neq0$ for $(d_2,q_2)\in\mathcal S$.
\begin{lemma}\label{lem2}
	Assume that $d_1,q_1>0$ and $(d_2,q_2)\in\mathcal S$ with $\mathcal S$ defined in \eqref{S}. Then
\begin{equation}\label{3.17}
	\lambda_1\left(d_2,q_2\right)\neq0.
\end{equation}	
\end{lemma}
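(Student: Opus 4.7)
The plan is to argue by contradiction, using Theorem~\ref{priori2} indirectly via an identity parallel to Lemma~\ref{impident} but for the pair $(\bm{\tilde u},\bm\psi)$, where $\bm\psi$ is the positive eigenvector at $\lambda_1=0$. Suppose $\lambda_1(d_2,q_2)=0$; by the $\mathcal{S}_1\leftrightarrow\mathcal{S}_2$ symmetry used at the beginning of the proof of Theorem~\ref{priori2}, I may assume $(d_2,q_2)\in\mathcal{S}_1$. Let $\bm\psi\gg\bm 0$ denote the principal eigenvector of \eqref{2.1} at $\lambda=0$, extended at the junction by $\psi^1_0=\psi^2_0:=\psi^3_{m_3}$, and introduce
\begin{equation*}
\tilde g^i_k:=\begin{cases}d_2\psi^i_{k-1}-(d_2+q_2)\psi^i_k,&(i,k)\in P_o\cup P_u\cup P_r,\\ 0,&(i,k)\in P_d\cup\{(1,m_1+1),(2,m_2+1)\},\end{cases}
\end{equation*}
in parallel with \eqref{tildef}. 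Since $\lambda_1=0$, the eigenvector equation rearranges into the structural parallels $\tilde g^i_k-\tilde g^i_{k+1}=-\psi^i_k(r-\tilde u^i_k)$ on $P_o\cup P_u\cup P_d$ and $\tilde g^i_k-\tilde g^1_1-\tilde g^2_1=-\psi^i_k(r-\tilde u^i_k)$ on $P_r$, mirroring \eqref{uf} for $\bm{\tilde u}$.

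I would then run the summation-by-parts derivation of Lemma~\ref{impident} on $(\bm{\tilde u},\bm\psi)$ in place of $(\bm u,\bm v)$: that derivation only exploits the equality of the nonlinear factor $r-\tilde u^i_k$ appearing in the two defining equations, so the cross terms $\tilde u^i_k\psi^i_k(r-\tilde u^i_k)$ cancel exactly, and the full identity \eqref{fgiden3} carries over with $(f^i_k,g^i_k)\mapsto(\tilde f^i_k,\tilde g^i_k)$ and $h^i_k:=(d_1-d_2)(\psi^i_{k-1}-\psi^i_k)-(q_1-q_2)\psi^i_k$. Specializing to $l_*=1$ and $l_i^*=m_i$ for $i=1,2$ annihilates every boundary term on the left-hand side (downstream: $\tilde f^3_1=\tilde g^3_1=0$; upstream: $\tilde f^i_{m_i+1}=\tilde g^i_{m_i+1}=0$), reducing the identity to
\begin{equation*}
\sum_{i=1}^2\sum_{k=1}^{m_i}h^i_k\tilde f^i_k\left(\tfrac{d_1+q_1}{d_1}\right)^{k-1}+\sum_{k=2}^{m_3}h^3_k\tilde f^3_k\left(\tfrac{d_1+q_1}{d_1}\right)^{k-m_3-1}=0.
\end{equation*}

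I would close by showing that this sum is strictly of one sign. The signs of $\tilde f$ come from Lemma~\ref{u}: strict negativity of $\tilde f^3_k$ for $2\le k\le m_3$ and of $\tilde f^2_k$ for $1\le k\le m_2$, together with the paired estimates $\tilde f^1_k+\tilde f^2_k<0$ and $\tilde u^1_k\tilde f^1_k+\tilde u^2_k\tilde f^2_k<0$ for $1\le k\le m_1$, which are needed because $\tilde f^1_k$ itself has no a priori sign. The signs of $h^i_k$ split exactly as in the proof of Lemma~\ref{canot}(iv): when $d_2=d_1$ one has $h^i_k=-(q_1-q_2)\psi^i_k<0$ directly, and when $d_2<d_1$ Zhou's inequality \eqref{whos} delivers $h^i_k\le\frac{d_1-d_2}{d_2}\tilde g^i_k$.

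The main obstacle is the $d_2<d_1$ subcase, where the bound for $h^i_k$ defers the sign question to $\tilde g^i_k$, for which Lemma~\ref{u} is not directly available. I expect to overcome this by first establishing a Lemma~\ref{u}-style sign pattern for $\{\tilde g^i_k\}$ using the same super/sub-solution and monotone-system tools employed in Lemma~\ref{priori}, adapted to the linear eigenvalue equation for $\bm\psi$ on the Y-shaped network, and then combining this with Lemma~\ref{u}(ii) to control the paired $i=1$/$i=2$ contributions at $1\le k\le m_1$ in the double sum.
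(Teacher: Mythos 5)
Your setup coincides with the paper's own first half: argue by contradiction, reduce to $(d_2,q_2)\in\mathcal S_1$, introduce the principal eigenvector $\bm\psi$ and the sequence $\{\tilde g^i_k\}$, observe that $(\bm{\tilde u},\bm\psi)$ satisfies the same structural equations as a positive equilibrium with the common factor $r-\tilde u^i_k$, and transfer the identity of Lemma \ref{impident} to this pair (this is \eqref{2.9}). The divergence --- and the gap --- is in how you close. You fix $l_*=1$ and $l_i^*=m_i$ once and for all, annihilate every boundary term, and then need the single remaining sum $\sum\tilde h^i_k\tilde f^i_k\,\alpha^i_k$ to have one strict sign. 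On the main stem this requires sign control of $\tilde h^3_k$ for \emph{every} $2\le k\le m_3$, which in the subcase $d_2<d_1$ you obtain only through $\tilde h^3_k\le\frac{d_1-d_2}{d_2}\tilde g^3_k$, hence requires $\tilde g^3_k\le 0$ for all $k$. That is not available: the analogue of Lemma \ref{priori}(i) transfers \emph{nonnegativity} of $\tilde f^3_l$ to $\tilde g^3_l$, and since Lemma \ref{u} gives $\tilde f^3_k<0$ that lemma is vacuous here; neither the paper nor your proposed sub/super-solution adaptation (whose direction of comparison is dictated by $(d_2,q_2)\in\mathcal S_1$) converts $\tilde f^3_k<0$ into $\tilde g^3_k\le0$. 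If $\tilde g^3_k>0$ at some interior patch --- a possibility the paper explicitly accommodates via the index $j_*$ in \eqref{lower} --- then $\psi^3_{k-1}>\psi^3_k$, so $\tilde h^3_k=(d_1-d_2)(\psi^3_{k-1}-\psi^3_k)-(q_1-q_2)\psi^3_k$ can be positive, giving $\tilde h^3_k\tilde f^3_k<0$ and destroying the one-signedness.

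The paper's remedy is precisely the adaptive choice you discard: it re-runs the entire case analysis of Theorem \ref{priori2} with $(f,g,u,v,u+v)$ replaced by $(\tilde f,\tilde g,\tilde u,\psi,\tilde u)$, taking $l_*=j_*$ (the largest index with $\tilde g^3_{j_*}>0$) so that the patches where $\tilde g^3_k$ may be positive are excised from the interior sum and reappear only in the boundary term $\tilde f^3_{j_*}\psi^3_{j_*}-\tilde g^3_{j_*}\tilde u^3_{j_*}$, whose sign is controlled because $\tilde f^3_{j_*}\le0$ and $\tilde g^3_{j_*}\ge0$; the branch indices $l_i^*$ are likewise chosen from the sign changes of $\tilde f^i_k,\tilde g^i_k$ through the analogues of Lemmas \ref{priori}--\ref{complex}. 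A secondary, repairable omission: even in the subcase $d_1=d_2$, your branch-one pairing needs $\tilde f^1_k\psi^1_k+\tilde f^2_k\psi^2_k<0$, which does not follow from Lemma \ref{u}(ii) (that lemma pairs with weights $1$ and $\tilde u^i_k$, not $\psi^i_k$); one must first transfer $\tilde f^1_k\ge0$ to $\tilde g^1_k\ge0$ and run the comparison \eqref{3.11} for $\psi$, i.e.\ reconstruct Claim 2 of Lemma \ref{complex} --- which again is the paper's full machinery rather than a shortcut through Lemma \ref{u}.
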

\begin{proof}
It suffices to prove $\lambda_1\left(d_2,q_2\right)\neq0$ for
$(d_2,q_2)\in\mathcal S_1$. Indeed, if $(d_2,q_2)\in \mathcal S_2$, we can derive
$\lambda_1\left(d_2,q_2\right)\neq0$
by interchanging the equations satisfied by $u_k^i$ and $v_k^i$ in model \eqref{pat-r} (see Theorem \ref{priori2} for a detailed explanation).
We now focus on the case where $(d_2,q_2)\in\mathcal S_1$.
Suppose to the contrary that
	\begin{equation}
		\lambda_1\left(d_2,q_2\right)=0
	\end{equation}
and denote the corresponding eigenvector by $\bm \psi^T$ with $$\bm\psi=(\psi^1_{1},\cdots,\psi^1_{m_1},\psi_{1}^2,\cdots,\psi^2_{m_2},\psi^{3}_1,\cdots,\psi^3_{m_3})\gg\bm0.$$ Similar to \eqref{fff}, we also set
\begin{equation}\label{2.311}
	\psi^1_{0}=\psi_{0}^2:=\psi^3_{m_3},
\end{equation}	
and	define $\{\tilde{g}^i_k\}_{(i,k)\in P^\ast}$ as follows:
	\begin{equation}\label{2.7}
		\begin{split}
			&\tilde g^i_k=\begin{cases}
				d_2\psi_{k-1}^i-(d_2+q_2)\psi_{k}^{i}, &(i,k)\in P_o\cup P_u\cup P_r,\\
				0,&(i,k)\in P_d\cup\{(1,m_1+1),(2,m_2+1)\}.
			\end{cases}
		\end{split}
	\end{equation}
	Then $(\tilde{\bm u},\bm \psi) $ is a positive solution of the following system
\begin{subequations}\label{2.8}
	\begin{align}
	&\tilde f^i_k-\tilde f^{i}_{k+1}+\tilde u^i_k\left(r-\tilde u^i_k\right)=0,\;\;\; \;\;\;\;\;(i,k)\in P_o\cup P_u\cup P_d,\\
	&\tilde f^i_k-\tilde f_1^1-\tilde f^2_1+\tilde u_k\left(r-\tilde u^i_k\right)=0,\;\;\;(i,k)\in P_r,\\
&\tilde g^i_k-\tilde g^{i}_{k+1}+\psi^i_k\left(r-\tilde u^i_k\right)=0,\;\;\; \;\;\;\;\;(i,k)\in P_o\cup P_u\cup P_d,\\
	&\tilde g^i_k-\tilde g_1^1-\tilde g^2_1+\psi^i_k\left(r-\tilde u^i_k\right)=0,\;\;\;(i,k)\in P_r.
\end{align}
\end{subequations}	
By replacing $\{f_k^i,g_k^i\}_{(i,k)\in P^\ast}$ in Lemmas \ref{priori}, \ref{canot} and \ref{complex}
with $\{\tilde f_k^i,\tilde g_k^i\}_{(i,k)\in P^\ast}$, we can derive analogous results for the sequences $\{\tilde f_k^i,\tilde g_k^i\}_{(i,k)\in P^\ast}$.

It follows from Lemma \ref{u} (i) that $\tilde f_{m_3}^1<0$ and $\tilde f_{1}^2<0$.
By replacing $\{f_k^i,g_k^i\}_{(i,k)\in P^\ast}$ in the proof of Theorem \ref{priori2} (case (i) $f^3_{m_3}\le0,\;f^2_1\le0$)
with $\{\tilde f_k^i,\tilde g_k^i\}_{(i,k)\in P^\ast}$, we can similarly derive a contradiction. This completes the proof.
\end{proof}

Combining Lemmas \ref{lm2.3} and \ref{lem2}, we can obtain the main result of this subsection.
\begin{theorem}\label{tm2}
	Assume that $d_1,q_1>0$, and let $\mathcal S_1,\mathcal S_2$ be defined in \eqref{S1S2}. Then the following two statements for model \eqref{pat-r} hold:
	\begin{enumerate}
		\item [\rm{(i)}]
		If $(d_2,q_2)\in \mathcal S_1$, then the semi-trivial equilibrium $(\bm 0,\bm {\tilde v})$ is locally asymptotically stable and $(\bm {\tilde u},\bm 0)$ is unstable;
		\item [\rm{(ii)}]
		If $(d_2,q_2)\in \mathcal S_2$, then the semi-trivial equilibrium $(\bm {\tilde u},\bm 0)$ is locally asymptotically stable and
		$(\bm 0,\bm {\tilde v})$ is unstable.
	\end{enumerate}
\end{theorem}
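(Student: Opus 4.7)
The stability of $(\bm{\tilde u},\bm 0)$ is governed by the sign of $\lambda_1(d_2,q_2)$ (positive means unstable, negative means locally asymptotically stable), and the analogous principal eigenvalue $\mu_1(d_1,q_1)$ obtained by linearizing at $(\bm 0,\bm{\tilde v})$ in the $u$-direction governs the stability of $(\bm 0,\bm{\tilde v})$. The plan is to pin down the signs of $\lambda_1$ on $\mathcal{S}_1$ and $\mathcal{S}_2$ by combining three ingredients: the identity $\lambda_1(d_1,q_1)=0$ (because $\bm{\tilde u}$ is the positive principal eigenfunction itself), the one-sided derivative information from Lemma~\ref{lm2.3}, and the nonvanishing statement from Lemma~\ref{lem2}.

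The key observation is that both $\mathcal{S}_1$ and $\mathcal{S}_2$ are path-connected open subsets of the first quadrant: each is the wedge $\{q/d \le q_1/d_1\}$ (respectively $\{q/d \ge q_1/d_1\}$) intersected with $\{d\le d_1\}$ (respectively $\{d\ge d_1\}$), with only the single corner $(d_1,q_1)$ removed, and removing one point from such a two-dimensional wedge leaves it path-connected. Since $\lambda_1$ depends continuously on $(d,q)$ and is nonzero on $\mathcal{S}_1\cup\mathcal{S}_2$ by Lemma~\ref{lem2}, it has constant sign on each of the two connected components. To determine the sign, I would probe along the vertical ray $d=d_1$: since $\lambda_1(d_1,q_1)=0$ and $\partial_q\lambda_1(d_1,q_1)<0$ by Lemma~\ref{lm2.3}, we get $\lambda_1(d_1,q_1-\varepsilon)>0$ and $\lambda_1(d_1,q_1+\varepsilon)<0$ for all sufficiently small $\varepsilon>0$. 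The former point lies in $\mathcal{S}_1$ and the latter in $\mathcal{S}_2$, so $\lambda_1>0$ on $\mathcal{S}_1$ and $\lambda_1<0$ on $\mathcal{S}_2$. This gives the stability claims for $(\bm{\tilde u},\bm 0)$ in both parts (i) and (ii).

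For the stability of $(\bm 0,\bm{\tilde v})$, I would invoke a symmetry/swap argument. The eigenvalue problem determining the stability of $(\bm 0,\bm{\tilde v})$ is structurally identical to \eqref{2.1} with $\bm{\tilde u}$ replaced by $\bm{\tilde v}$ and $(d,q)$ replaced by $(d_1,q_1)$, and all the preceding lemmas (Lemmas \ref{fb}, \ref{u}, \ref{lm2.3}, \ref{lem2}) carry over verbatim after relabeling $(d_1,q_1)\leftrightarrow(d_2,q_2)$. Define $\tilde{\mathcal{S}}_1,\tilde{\mathcal{S}}_2$ by swapping roles in \eqref{S1S2}. A direct check of the definitions shows that $(d_2,q_2)\in\mathcal{S}_1$ if and only if $(d_1,q_1)\in\tilde{\mathcal{S}}_2$, and similarly $(d_2,q_2)\in\mathcal{S}_2$ iff $(d_1,q_1)\in\tilde{\mathcal{S}}_1$. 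Applying the result of the previous paragraph to $\mu_1$ therefore yields $\mu_1(d_1,q_1)<0$ when $(d_2,q_2)\in\mathcal{S}_1$ and $\mu_1(d_1,q_1)>0$ when $(d_2,q_2)\in\mathcal{S}_2$, completing both statements of the theorem.

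The only delicate points are verifying path-connectedness of $\mathcal{S}_1,\mathcal{S}_2$ (a quick planar argument: any two points can be joined through a path that detours around $(d_1,q_1)$ staying in the wedge) and verifying the swap correspondence between $\mathcal{S}_i$ and $\tilde{\mathcal{S}}_{3-i}$ (an elementary inequality calculation showing that $q_2/d_2\le q_1/d_1$ and $d_2\le d_1$ is equivalent to $q_1/d_1\ge q_2/d_2$ and $d_1\ge d_2$). The substantive content has already been done in Lemmas~\ref{lm2.3} and \ref{lem2}, so this final step is primarily a topological/bookkeeping argument, and I do not anticipate any genuine obstacle here.
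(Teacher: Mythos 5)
Your proposal is correct and follows essentially the same route as the paper's (very terse) proof: combine $\lambda_1(d_1,q_1)=0$, the derivative sign from Lemma~\ref{lm2.3}, and the nonvanishing from Lemma~\ref{lem2} with continuity and path-connectedness of $\mathcal S_1,\mathcal S_2$ to fix the sign of $\lambda_1$ on each region, then transfer to $(\bm 0,\bm{\tilde v})$ by the symmetry swap $(d_1,q_1)\leftrightarrow(d_2,q_2)$, noting $(d_2,q_2)\in\mathcal S_i$ iff $(d_1,q_1)\in\tilde{\mathcal S}_{3-i}$. The only cosmetic slip is calling $\mathcal S_1,\mathcal S_2$ open (they contain boundary points), but your argument uses only path-connectedness and continuity, so nothing is affected.
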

\begin{proof}
	By Lemmas \ref{lm2.3}-\ref{lem2}, we see that $(\bm {\tilde u},\bm 0)$ is locally asymptotically stable for $(d_2,q_2)\in \mathcal S_2$ and unstable for $(d_2,q_2)\in \mathcal S_1$. Since the nonlinear terms of model \eqref{pat-r} are symmetric,
	$(\bm 0,\bm {\tilde v})$ is locally asymptotically stable for $(d_2,q_2)\in \mathcal S_1$ and unstable for $(d_2,q_2)\in \mathcal S_2$. This completes the proof.
\end{proof}

\subsection{Global stability}\label{sub23}
In this subsection, we obtain the global dynamics of model \eqref{pat-r}.
\begin{theorem}\label{tm1}
	Assume that $d_1,q_1>0$, and let $\mathcal S_1,\mathcal S_2$ be defined in \eqref{S1S2}. Then the following two statements for model  \eqref{pat-r} hold:
	\begin{enumerate}
	\item [\rm{(i)}]
	If $(d_2,q_2)\in \mathcal S_1$, then the semi-trivial equilibrium $(\bm 0,\bm {\tilde v})$ is globally asymptotically stable.
	\item [\rm{(ii)}]
	If $(d_2,q_2)\in \mathcal S_2$, then the semi-trivial equilibrium $(\bm {\tilde u},\bm 0)$ is globally asymptotically stable.
	\end{enumerate}
\end{theorem}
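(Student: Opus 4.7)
The plan is to deduce Theorem \ref{tm1} from the already-established ingredients (Theorem \ref{priori2} on the nonexistence of a positive equilibrium, and Theorem \ref{tm2} on the linearized stability of the semi-trivial equilibria) via the theory of strongly monotone dynamical systems for two-species competition. First I would recast \eqref{pat-r} as a semiflow on the positive cone of $\mathbb{R}^{2N}$ with $N=m_1+m_2+m_3$ and check the three structural hypotheses: (a) forward invariance and uniform boundedness of the nonnegative orthant, which follow from the logistic structure $u_k^i(r-u_k^i-v_k^i)$ and $v_k^i(r-u_k^i-v_k^i)$ together with a comparison argument against the single-species system \eqref{pat-single}; (b) the cooperativity of the vector field after the flip $\bm v\mapsto -\bm v$, ordering $\mathbb{R}^{2N}$ by the competitive cone $K=\mathbb{R}_+^N\times(-\mathbb{R}_+^N)$, which is immediate since all off-diagonal entries of the coupling matrices $d_iD_{kj}+q_iQ_{kj}$ are nonnegative; (c) irreducibility of the Jacobian at every interior point, which holds because the undirected patch graph underlying the Y-network is connected (each pair of adjacent patches is linked through the diffusion term $d_i$, and the junction patch $(3,m_3)$ is linked to both $(1,1)$ and $(2,1)$ through $(d_i+q_i)$). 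Together these give a strongly order-preserving, eventually compact semiflow on a bounded invariant rectangle.

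Next I would invoke the standard trichotomy for strongly monotone competitive two-species systems (for instance the Hsu--Smith--Waltman theorem, or Theorem 7.4.2 in Smith's \emph{Monotone Dynamical Systems}; see also \cite{43-Hess-1992}). Under $(d_2,q_2)\in\mathcal{S}_1$, Theorem \ref{tm2}(i) gives that $(\bm{\tilde u},\bm 0)$ is unstable while $(\bm 0,\bm{\tilde v})$ is linearly stable, and Theorem \ref{priori2} excludes any positive coexistence equilibrium. The trichotomy then rules out the only alternative scenarios (coexistence or bistability) and forces every solution starting from initial data with $\bm u_0,\bm v_0\ge(\not\equiv)\bm 0$ to converge to $(\bm 0,\bm{\tilde v})$; combined with the linear stability from Theorem \ref{tm2}, this yields global asymptotic stability and proves (i).

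Finally, part (ii) follows by the built-in symmetry of \eqref{pat-r} under the interchange $(\bm u,d_1,q_1)\leftrightarrow(\bm v,d_2,q_2)$: as noted in the opening of the proof of Theorem \ref{priori2}, $(d_2,q_2)\in\mathcal{S}_2$ is equivalent to $(d_1,q_1)\in\tilde{\mathcal{S}}_1$ with the roles of the two species swapped, so (i) applied to the swapped system gives global asymptotic stability of $(\bm{\tilde u},\bm 0)$. The main obstacle I anticipate is not any single estimate but the careful verification that the semiflow actually falls in the scope of the monotone-systems trichotomy on this non-smooth network geometry; in particular one must check strong monotonicity at the junction $(3,m_3)$, where the drift coupling is one-way from $(1,1),(2,1)$ into $(3,m_3)$, so irreducibility must be argued through the bidirectional diffusion links rather than the drift. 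Once this structural check is in place, no further delicate analysis is needed beyond citing the trichotomy and invoking Theorems \ref{priori2}--\ref{tm2}.
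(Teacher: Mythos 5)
Your proposal is correct and follows essentially the same route as the paper: the paper's proof likewise combines Theorem \ref{priori2} (no positive equilibrium) with Theorem \ref{tm2} (stability/instability of the semi-trivial equilibria) and then invokes the standard trichotomy for strongly monotone two-species competitive systems, citing \cite{43-Hess-1992,57-Hsu-Smith-Waltman-1996,58-Lam-Munther-2010,59-Smith-1995}, with part (ii) handled by the same symmetry. The only difference is that you spell out the structural verification (invariance, competitive-cone monotonicity, irreducibility at the junction) that the paper leaves implicit, which is a welcome but non-essential addition.
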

\begin{proof}
We only prove (i), and (ii) can be treated similarly.
By Theorems \ref{priori2} and \ref{tm2}, if $(d_2,q_2)\in \mathcal S_1$, then $(\bm 0,\bm {\tilde v})$ is locally asymptotically stable, $(\bm {\tilde u},\bm 0)$ is unstable, and \eqref{pat-r} has no positive equilibrium. Therefore, by the monotone dynamical system theory for competitive systems (\cite[Theorem 1.3]{58-Lam-Munther-2010}, see also \cite{43-Hess-1992,57-Hsu-Smith-Waltman-1996,59-Smith-1995}), $(\bm 0,\bm {\tilde v})$ is globally asymptotically stable. This completes the proof.
\end{proof}

The following result follows from Theorem \ref{tm1} and states that the species with a smaller drift rate will drive the other species to extinction, which suggests that smaller drift rates are favored.
\begin{corollary}\label{4.7}
Suppose that $d_2=d_1>0$. Then the semi-trivial equilibrium $(\bm {\tilde u},\bm 0)$ (resp. $(\bm 0,\bm {\tilde v})$) of model \eqref{pat-r} is
globally asymptotically stable for $q_2>q_1>0$ (resp. $0<q_2<q_1$).
\end{corollary}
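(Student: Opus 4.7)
The plan is to derive this corollary as a direct specialization of Theorem \ref{tm1}, so the main task reduces to verifying that the hypothesis $d_2=d_1$ together with the appropriate strict inequality on the drift rates places $(d_2,q_2)$ in $\mathcal S_2$ or $\mathcal S_1$, respectively, and then invoking the global stability already established.

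First, I would recall the definitions in \eqref{S1S2}. With $d=d_2=d_1$, the line $q=\tfrac{q_1}{d_1}d$ passes through the single point $q=q_1$, so the set $\mathcal S_2$ restricted to $\{d=d_1\}$ becomes exactly $\{(d_1,q):q>q_1\}$ and $\mathcal S_1$ restricted to $\{d=d_1\}$ becomes exactly $\{(d_1,q):0<q<q_1\}$; in both cases the exclusion $(d,q)\ne(d_1,q_1)$ forces the strict inequality. This is the only non-trivial bookkeeping and it is immediate.

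Second, I would apply Theorem \ref{tm1}. If $q_2>q_1>0$, then $(d_2,q_2)=(d_1,q_2)\in\mathcal S_2$, so Theorem \ref{tm1}(ii) gives that $(\bm{\tilde u},\bm 0)$ is globally asymptotically stable. If $0<q_2<q_1$, then $(d_2,q_2)=(d_1,q_2)\in\mathcal S_1$, so Theorem \ref{tm1}(i) gives that $(\bm 0,\bm{\tilde v})$ is globally asymptotically stable.

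Since all the real work (nonexistence of positive equilibrium in Theorem \ref{priori2}, instability/stability of the semi-trivial equilibria in Theorem \ref{tm2}, and the monotone dynamical system argument in Theorem \ref{tm1}) is already done, there is no genuine obstacle here; the only care needed is to notice that the boundary line $q=\tfrac{q_1}{d_1}d$ intersects $\{d=d_1\}$ only at $(d_1,q_1)$, which is precisely the point excluded from both $\mathcal S_1$ and $\mathcal S_2$. Thus the two regimes $0<q_2<q_1$ and $q_2>q_1$ cover all admissible values of $q_2$ and the corollary follows.
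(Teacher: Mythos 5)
Your proposal is correct and matches the paper exactly: the paper also obtains Corollary \ref{4.7} as an immediate specialization of Theorem \ref{tm1}, noting that on the line $d=d_1$ the sets $\mathcal S_1$ and $\mathcal S_2$ reduce to $0<q_2<q_1$ and $q_2>q_1$ respectively. Your bookkeeping of the excluded point $(d_1,q_1)$ is accurate and nothing further is needed.
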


\section{Simulations and discussions}\label{sec5}
Our results in Theorem \ref{tm1} are consistent with the results obtained in \cite{37-Chen-Liu-Wu-2022,33-Jiang-Lam-Lou-2021,20-Lou-2006} for model \eqref{App1} in a straight river (i.e., $(D_{kj})$ and $(Q_{kj})$ are defined in \eqref{App2}). For both the straight river and the Y-shaped river network,
as illustrated in Figures \ref{fig2} and \ref{fig1}, respectively,
competition occurs for $(d_2,q_2)\in \mathcal S_1\cup\mathcal S_2$. In particular, when the random dispersal rates of the two species are equal, the species with a smaller drift rate will drive the other species to extinction, which suggests that smaller drift rates are favored.
This result is biologically reasonable: the directed drift washes individuals downstream, which leads to
overcrowding and resource overexploitation at the boundary. Therefore, the species with a larger drift rate faces a higher risk of extinction.

We have shown above that the Y-shaped river network
 illustrated in Figure \ref{fig2} does not influence the global dynamics for two-species competition model.
However, the structure of river networks can influence the distribution of the equilibria. Consider the semi-trivial equilibria $(\bm {\tilde u},\bm 0)$ (resp. $(\bm{w}^\ast,\bm{0})$ for a straight river) as an example.
In the case of a straight river (where $(D_{kj})$ and $(Q_{kj})$ are defined in \eqref{App2}),
Lemma \ref{lem-A4}(ii) (see Appendix) indicates that $\tilde{f}_k:=d_1w^*_{k-1}-(d_1+q_1)w^*_k<0$ for $k=2,\cdots,n$. In contrast, for the Y-shaped river network
shown in Figure \ref{fig2}, Lemma \ref{u} indicates that $\tilde{f}^2_k<0$ for $k=1,\cdots,m_2$ and $\tilde{f}^3_k<0$ for $k=2,\cdots,m_3$.  Interestingly, numerical simulations suggest that some elements in sequence $\{f_k^1\}_{k=1}^{m_1}$ can change sign as the number of patches in river segment $2$ increases, as shown in Figure \ref{fig8}.
Furthermore, in the case of a straight river (where $(D_{kj})$ and $(Q_{kj})$ are defined in \eqref{App2}), Lemma \ref{lem-A4}(i) indicates that
 $w_n^*<w_{n-1}^*<\cdots<w_1^*$, i.e., the distribution of the semi-trivial equilibrium increases as individuals move  downstream. In contrast, for the Y-shaped river network
shown in Figure \ref{fig2}, the distribution of the semi-trivial equilibrium may be non-monotone for both river segments $1,3$ and river segments $2,3$, and
the distribution of the semi-trivial equilibrium at the junction patch may become the second largest, as illustrated in Figure \ref{fig9}.
\begin{figure}[htbp]
	\centering
	\begin{subfigure}[b]{0.48\linewidth}
			\includegraphics[width=\linewidth]{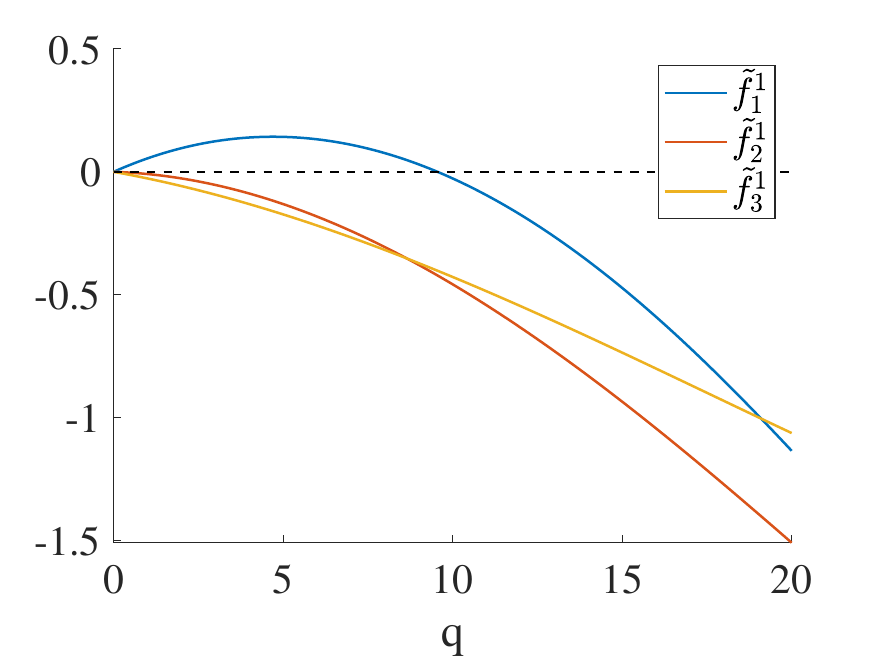}
		\caption{$m_1=3,m_2=10,m_3=4$.}
		\label{fig8-a}
	\end{subfigure}
	\hfill
	\begin{subfigure}[b]{0.48\linewidth}
			\includegraphics[width=\linewidth]{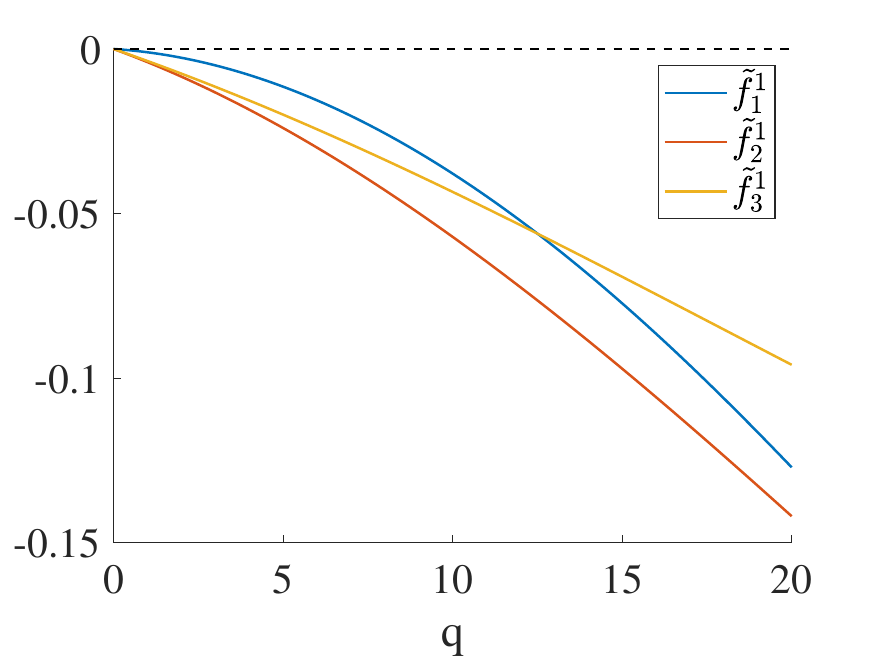}
		\caption{$m_1=3,m_2=7,m_3=4$.}
		\label{fig8-b}
	\end{subfigure}
	\caption{The graphs of $\tilde{f}^1_1$, $\tilde{f}^1_2$ and $\tilde{f}^1_3$ with respect to $q$. Here $d=200$ and $r=3$.}
		\label{fig8}
		\end{figure}
\begin{figure}[htbp]
	\centering
	\begin{subfigure}[b]{0.48\linewidth}
		\includegraphics[width=\linewidth]{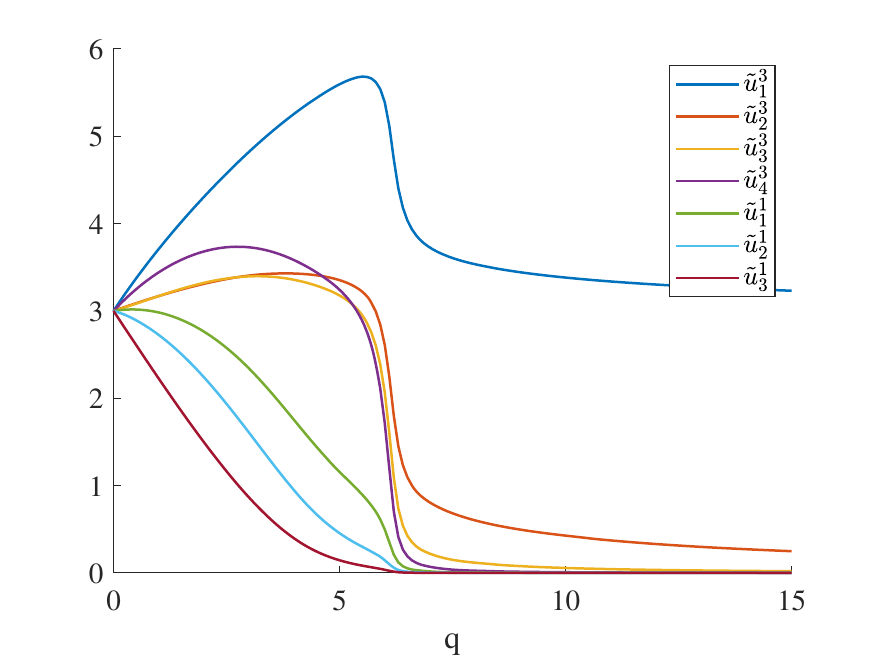}
		\caption{river segments 1,3.}
		\label{fig9-a}
	\end{subfigure}
	\hfill
	\begin{subfigure}[b]{0.48\linewidth}
		\includegraphics[width=\linewidth]{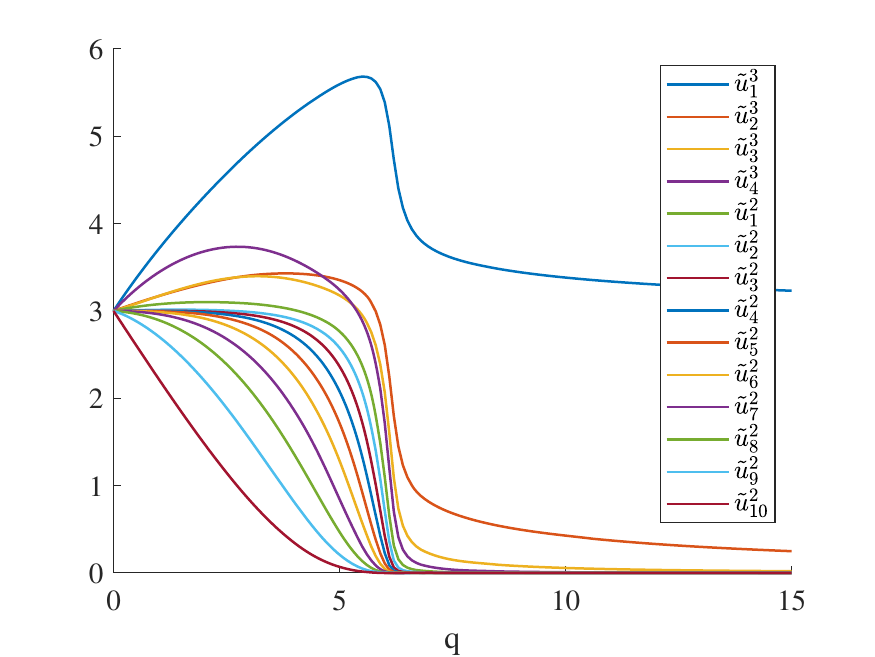}
			\caption{river segments 2,3.}
		\label{fig9-b}
	\end{subfigure}
	\caption{The
		graphs of $\tilde{u}^i_k$ with respect to $q$, where (a) denotes
		the graphs of $\tilde{u}^i_k$ in river segments $1,3$, and (b) denotes the
		graphs of $\tilde{u}^i_k$ in river segments $2,3$. Here $m_1=3$, $m_2=10$, $m_3=4$, $d=1$, and $r=3$. }
	\label{fig9}
\end{figure}

Our results in Theorem \ref{tm1} imply that competition exclusion occurs in model \eqref{pat-r} when $(d_2,q_2)\in\mathcal S_1 \cup\mathcal S_2$, as illustrated in Figure \ref{fig4}. However, and it is
still challenging to explore the dynamics of model \eqref{pat-r} in the blank
regions. For instance, by fixing $q_1=q_2=q>0$, the authors in \cite{32-Jiang-Lam-Lou-2020} conjectured that the semi-trivial equilibrium $(\bm {\tilde u},\bm 0)$ is globally asymptotically stable for $d_1>d_2>0$, while  the semi-trivial equilibrium $(\bm 0,\bm {\tilde v})$ is globally asymptotically stable for $d_2>d_1>0$. To further explore the dynamics of model \eqref{pat-r},  we provide some numerical simulations to illustrate the possible dynamical phenomena that may arise in these blank parameter regions. In Figure \ref{fig5}, we numerically show that
if $(d_2,q_2)\in D_1$, the solution converges to the semi-trivial equilibrium $(\bm 0,\bm {\tilde v})$ as $t\to\infty$;
and if $(d_2,q_2)\in D_2$, and the solution converges to the semi-trivial equilibrium $(\bm {\tilde u},\bm 0)$ as $t\to\infty$. 
Additionally, our numerical simulations suggest that the two species appear to coexist in the regions $(d_2,q_2)\in E_1\cup E_2$, as illustrated in Figure \ref{fig6} for $(d_2,q_2)\in E_1$.

Moreover, it seems both natural and interesting to explore the global dynamics of two-species competition patch models in a spatially heterogeneous environment or under different types of boundary conditions at the downstream.
However, such investigations are clearly non-trivial and present significant challenges. Recently, Vasilyeva et al. \cite{61-Vasilyeva-2024}
studied the Y-shaped river network under a free-flow boundary condition (where there is population loss at downstream end) in the framework of reaction-diffusion models. They showed that the shape of the positive steady
state in a single population model depends on the geometry of the network. For the two-species competition model, they showed that the geometry of the network determines whether higher or intermediate dispersal is favored. Exploring the global dynamics for such reaction-diffusion models remains a challenging and open problem.

\begin{figure}[htbp]
	\centering\includegraphics[width=15cm]{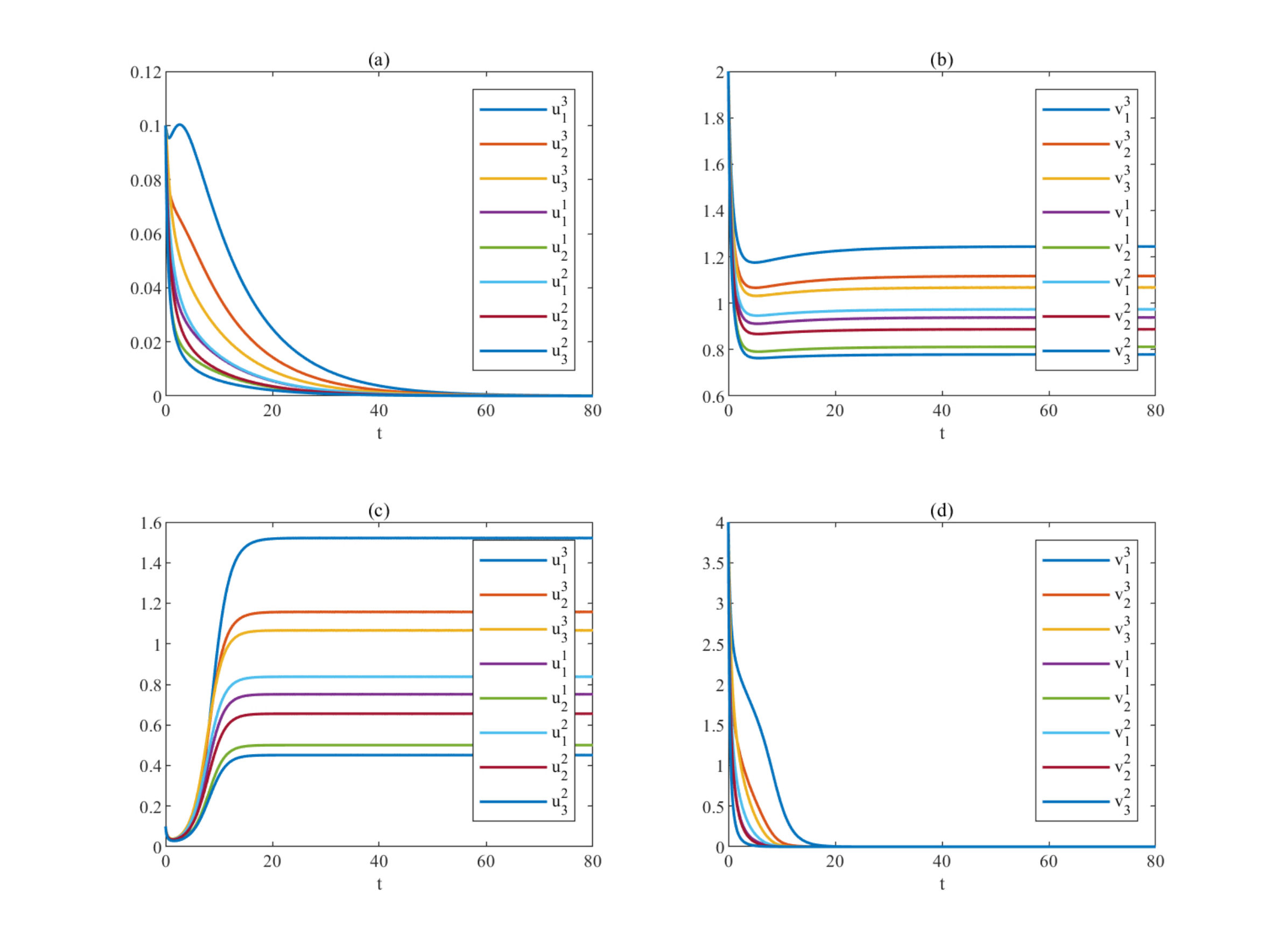}
	\caption{Solutions of  model \eqref{pat-r} with $m_1=2$, $m_2=3$, $m_3=3$, $r=1$, $d_1=1$, and $q_1=1$. For (a)-(b), $(d_2,q_2)\in D_1$ with $d_2=2$ and $q_2=0.5$, and the solution converges to the semi-trivial equilibrium $(\bm 0,\bm {\tilde v})$ as $t\to\infty$. For (c)-(d), $(d_2,q_2)\in D_2$ with $d_2=0.1$ and $q_2=2$, and the solution converges to the semi-trivial equilibrium $(\bm {\tilde u},\bm 0)$ as $t\to\infty$. } \label{fig5}
\end{figure}
\begin{figure}[htbp]
	\centering\includegraphics[width=16cm]{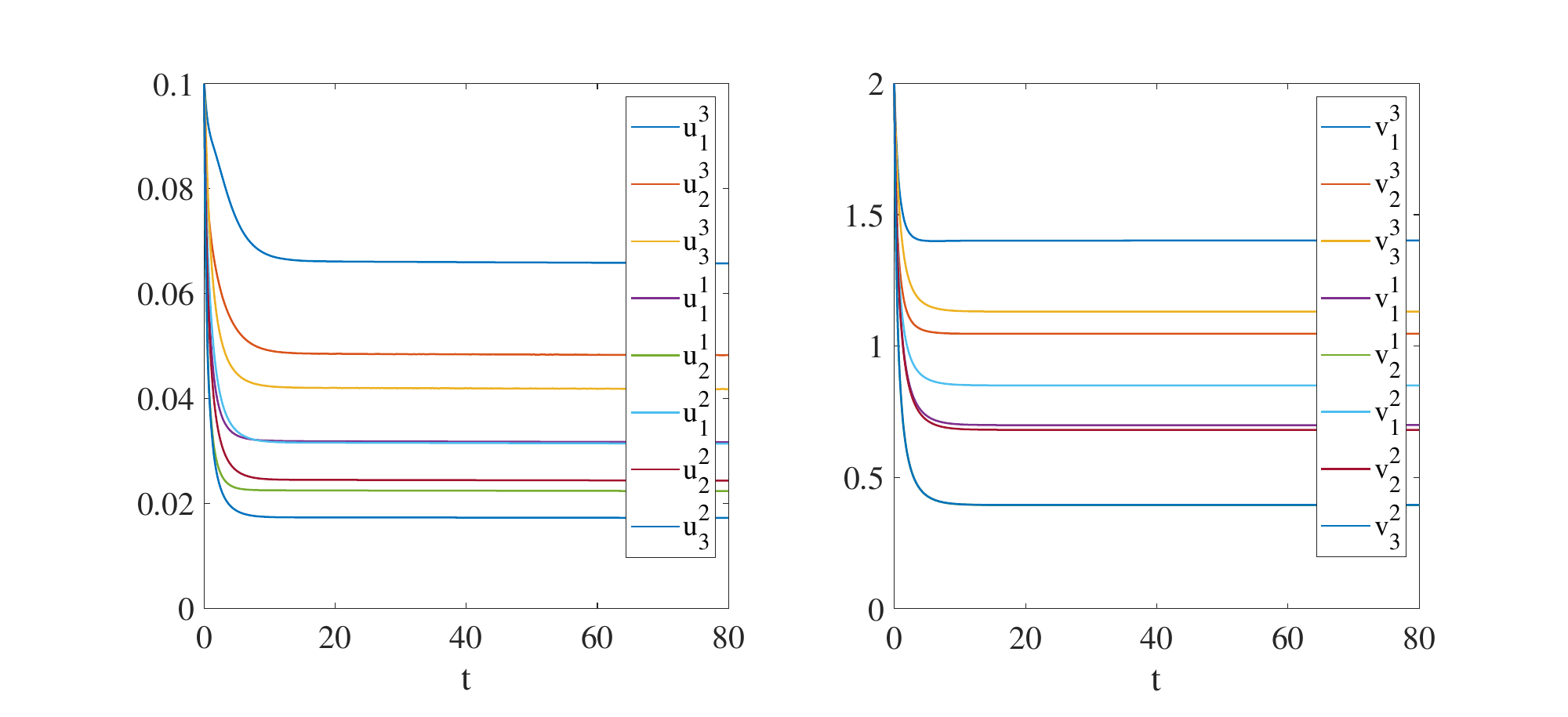}
	\caption{Solutions of  model \eqref{pat-r} with $m_1=2$, $m_2=3$, $m_3=3$, $r=1$, $d_1=1$, $d_2=0.1$, $q_1=1$, and $q_2=0.66$, Here $(d_2,q_2)\in E_1$, and the two species seem to coexist.} \label{fig6}
\end{figure}

\vspace{0.1in}
\begin{appendix}
\setcounter{equation}{0} \setcounter{theorem}{0}
\renewcommand{\thesection}{\Alph{section}}
\renewcommand{\theequation}{\arabic{equation}}
\setcounter{section}{1}
\noindent\textbf{Appendix}\vspace{10pt}

In this part, we revisit some results obtained in \cite{37-Chen-Liu-Wu-2022} for model \eqref{App1} with $(D_{kj})$ and $(Q_{kj})$ defined in \eqref{App2}.
By \cite[Lemma 2]{37-Chen-Liu-Wu-2022}, model \eqref{App1} with $(D_{kj})$ and $(Q_{kj})$ defined in \eqref{App2} admits only two semi-trivial equilibria $(\bm{w}^\ast,\bm{0})$ and $(\bm{0},\bm{z^}\ast)$ with $\bm{w}^\ast,\bm{z^}\ast\gg\bm0$. Here $\bm{w}^\ast$ is the unique positive solution of the following equations:
\begin{equation}\label{App6}
	\ds\sum_{j=1}^{n}(d_1D_{kj}+q_1Q_{kj})w_j+w_k(r-w_k)=0,\;\;k=1,\cdots,n,\tag{A.1}
\end{equation}
where $(D_{kj})$ and $(Q_{kj})$ are defined in \eqref{App2}.

Now we cite the following result from \cite[Lemma 6]{37-Chen-Liu-Wu-2022}.
\begin{lemma}\label{lem-A4}
	Assume that $d_1,q_1>0$, and let  $\bm{w^\ast}=(w_1^\ast,\cdots,w_n^\ast)\gg\bm{0}$ be the unique positive solution of \eqref{App6}, where $(D_{kj})$ and $(Q_{kj})$ are defined in \eqref{App2}. Then
	\begin{enumerate}
		\item [\rm{(i)}]$w_n^*<w_{n-1}^*<\cdots<w_1^*$;
		\item [\rm{(ii)}]
		$d_1w_{k-1}^\ast<(d_1+q_1)w_k^\ast$ for $k=2,\cdots,n$.
	\end{enumerate}
\end{lemma}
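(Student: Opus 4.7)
The plan is to introduce the discrete flux $f_k := d_1 w_{k-1}^\ast - (d_1+q_1) w_k^\ast$ and show $f_k < 0$ for every $2 \le k \le n$ by a boundary-to-interior cascading argument, very much in the spirit of the auxiliary sequences $\{f^i_k\}$ used in Section~3 of this paper.

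First I would rewrite system \eqref{App6}. Unpacking the entries of $(D_{kj})$ and $(Q_{kj})$ from \eqref{App2}, the boundary equation at $k=1$ reads $-d_1 w_1^\ast + (d_1+q_1) w_2^\ast + w_1^\ast(r - w_1^\ast) = 0$, and the boundary equation at $k=n$ reads $d_1 w_{n-1}^\ast - (d_1+q_1) w_n^\ast + w_n^\ast(r - w_n^\ast) = 0$. Setting $f_1 := 0$ and $f_{n+1} := 0$ by convention, I verify that all $n$ equations collapse into the single flux identity
\begin{equation*}
f_{k+1} - f_k = w_k^\ast(r - w_k^\ast), \qquad 1 \le k \le n.
\end{equation*}
The conclusion to be proved is exactly $f_k < 0$ for $2 \le k \le n$.

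Next I would argue by contradiction. Suppose $f_{k_0} \ge 0$ for some $2 \le k_0 \le n$, and take $k_0$ maximal. I first show $w_{k_0}^\ast \ge r$, with strict inequality whenever $k_0 < n$: if $k_0 < n$, maximality forces $f_{k_0+1} < 0$, so $w_{k_0}^\ast(r - w_{k_0}^\ast) = f_{k_0+1} - f_{k_0} < 0$ and hence $w_{k_0}^\ast > r$; if $k_0 = n$, the flux identity at $k=n$ combined with $f_{n+1} = 0$ gives $-f_n = w_n^\ast(r - w_n^\ast)$, so $f_n \ge 0$ yields $w_n^\ast \ge r$, and in the borderline case $w_n^\ast = r$ one has $f_n = 0$ and $w_{n-1}^\ast = \frac{d_1+q_1}{d_1}\, r > r$, so the induction below simply begins at index $n-1$.

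Then I would perform backward induction down to $k=2$: assuming $f_k > 0$ and $w_k^\ast > r$, the inequality $d_1 w_{k-1}^\ast > (d_1+q_1) w_k^\ast > (d_1+q_1) r$ forces $w_{k-1}^\ast > \frac{d_1+q_1}{d_1}\, r > r$, and the flux identity gives $f_{k-1} = f_k - w_{k-1}^\ast(r - w_{k-1}^\ast) > 0$. Cascading down to $k=2$ produces $f_2 > 0$ together with $w_1^\ast > r$; but the flux identity at $k=1$ (using $f_1 = 0$) reads $f_2 = w_1^\ast(r - w_1^\ast) < 0$, a contradiction. The main obstacle is the minor bookkeeping in the case $k_0 = n$, where the starting inequality $w_n^\ast \ge r$ is not strict; the factor $(d_1+q_1)/d_1 > 1$ restores strict positivity at the very next step and allows the induction to proceed cleanly.
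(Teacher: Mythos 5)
Your argument is correct. Note first that the paper itself does not prove Lemma \ref{lem-A4}: it is quoted verbatim from \cite[Lemma 6]{37-Chen-Liu-Wu-2022}, so there is no in-paper proof to match against. Your reduction of \eqref{App6} to the flux identity $f_{k+1}-f_k=w_k^\ast(r-w_k^\ast)$ with the conventions $f_1=f_{n+1}=0$ is exactly right, and the contradiction argument is sound: taking $k_0$ maximal with $f_{k_0}\ge 0$ correctly yields $w_{k_0}^\ast>r$ (or the borderline $w_n^\ast=r$, which you handle by stepping to $n-1$), the backward cascade preserves $f_k>0$ and $w_k^\ast>r$ because $f_k\ge0$ forces $w_{k-1}^\ast\ge\frac{d_1+q_1}{d_1}w_k^\ast>r$ and hence $f_{k-1}=f_k-w_{k-1}^\ast(r-w_{k-1}^\ast)>f_k$, and the terminal clash with $f_2=w_1^\ast(r-w_1^\ast)<0$ (i.e.\ with $f_1=0$) closes the proof; the $n=2$ and $k_0=2$ edge cases also check out. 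Methodologically this is the mirror image of what the paper does for the Y-shaped analogues: Lemma \ref{fb}\,(ii) and Lemma \ref{u}\,(i) establish $\tilde f^3_k<0$ by a \emph{forward} induction from the downstream zero-flux condition $\tilde f^3_1=0$ toward the upstream end, deriving a contradiction with $\tilde f^i_{m_i+1}=0$ there, whereas you run a \emph{backward} cascade from the maximal non-negative flux toward the downstream end and contradict $f_1=0$. The two directions are interchangeable here because both endpoints carry a zero-flux condition; your version has the mild advantage of isolating the single borderline case $w_n^\ast=r$ explicitly, while the paper's forward version generalizes more readily to the junction patch where the flux balance involves both branches.
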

By Lemma \ref{lem-A4}, we can obtain the following result.
\begin{lemma}\label{auxilemma}
	Assume that $d_1,q_1>0$, and let $\bm w^{j}=(w_1^j,\cdots,w_n^j)\gg\bm0$ be the unique positive solution of \eqref{App6} with $n=n_j$ for $j=1,2$, where $n_1\le n_2$ and $(D_{kj})$ and $(Q_{kj})$ are defined in \eqref{App2}. Then $w_k^1\le w^2_k$ for $k=1,\cdots,n_1$.
\end{lemma}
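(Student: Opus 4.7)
The approach is to view the first $n_1$ components of $\bm w^2$ as a super-equilibrium of the $n_1$-patch logistic system and then exploit monotone dynamical system theory: in such a cooperative system, any super-equilibrium that lies in the basin of $\bm w^1$ must dominate $\bm w^1$ componentwise.

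First I may assume $n_1 < n_2$, since $n_1 = n_2$ reduces to uniqueness of the positive solution of \eqref{App6}. Set $\bar{\bm w} := (w_1^2, w_2^2, \ldots, w_{n_1}^2) \gg \bm 0$ and let $F^{(n_1)}(\bm w)$ denote the right-hand side of \eqref{App6} viewed on $n_1$ patches. For $k \in \{1, \ldots, n_1 - 1\}$, the equation at patch $k$ uses the same nearest-neighbour stencil in both the $n_1$- and the $n_2$-systems, since only indices inside $\{1, \ldots, n_1\}$ are coupled to $k$; therefore $F^{(n_1)}_k(\bar{\bm w}) = 0$. The two systems differ only at patch $n_1$, which is the upstream endpoint in the $n_1$-system but an interior patch in the $n_2$-system. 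Subtracting the two stencils and using that the $n_2$-system vanishes at $\bm w^2$ yields
\begin{equation*}
F^{(n_1)}_{n_1}(\bar{\bm w}) = -\bigl[(d_1 + q_1)\, w_{n_1 + 1}^2 - d_1\, w_{n_1}^2\bigr] < 0,
\end{equation*}
where the strict inequality comes from Lemma \ref{lem-A4}. Thus $\bar{\bm w}$ is a super-equilibrium of the $n_1$-patch system, with strict negativity at the last coordinate.

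Next, the coupling matrix $d_1 D + q_1 Q$ has nonnegative off-diagonal entries, so the $n_1$-patch single-species logistic system is cooperative. As recalled in the paragraph preceding \eqref{pat-single}, this system admits $\bm w^1$ as its unique positive equilibrium, and $\bm w^1$ is globally asymptotically stable in the positive cone. Consequently, the trajectory starting from $\bar{\bm w}$ is monotonically nonincreasing in $t$ and converges to an equilibrium bounded above by $\bar{\bm w}$; since it stays strictly positive (the trivial equilibrium being unstable), this limit must be $\bm w^1$. Hence $\bar{\bm w} \geq \bm w^1$ componentwise, i.e., $w_k^1 \leq w_k^2$ for $k = 1, \ldots, n_1$.

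The only delicate step is the sign identification at patch $n_1$, and it rests entirely on the inequality $d_1 w_{n_1}^2 < (d_1 + q_1) w_{n_1 + 1}^2$ supplied by Lemma \ref{lem-A4}; once that is secured, the monotone comparison on the restricted system is standard.
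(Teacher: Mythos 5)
Your proof is correct and follows essentially the same route as the paper: both restrict $\bm w^2$ to the first $n_1$ patches, use Lemma \ref{lem-A4} to show the only defect occurs at patch $n_1$ with the favorable sign $d_1 w_{n_1}^2-(d_1+q_1)w_{n_1+1}^2<0$, and conclude that this restriction is an upper solution (super-equilibrium) of \eqref{App6} with $n=n_1$, which dominates $\bm w^1$ by the monotone comparison. You merely spell out the final monotone-dynamics step that the paper leaves implicit.
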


\begin{proof}
	It follows from Lemma \ref{lem-A4} that $$d_1w^2_{k-1}-(d_1+q_1)w^2_{k}<0 \;\;\text{for}\;\; k=2,\cdots,n_2.$$
	Then
	$(w^2_{1},\cdots,w^2_{n_1})$ satisfies
	\begin{equation*}
		\begin{cases}
			(d_1+q_1)w^2_2-d_1w^2_1=-w^2_1(r-w^2_1),\\
			d_1w^2_{k-1}-(2d_1+q_1)w^2_{k}+(d_1+q_1)w^2_{k+1}=-w^2_k(r-w^2_k),&k=2,\cdots,n_1-1,\\
			-(d_1+q_1)w^2_{n_1}+d_1w^2_{n_1-1}\le -w^2_{n_1}(r-w^2_{n_1}),
		\end{cases}
	\end{equation*}
	which implies that $(w^2_{1},\cdots,w^2_{n_1})$ is an upper solution of \eqref{App6} with $n=n_1$, and consequently, the desired result holds.
\end{proof}
Define
\begin{equation*}
\begin{split}
&G_1:=\left\{(d,q):d\geq\frac{d_1}{q_1}q,0<q\leq q_1,(d,q)\neq(d_1,q_1) \right\},\\
&G_2:=\left\{(d,q):0<d\leq\frac{d_1}{q_1}q,q\geq q_1,(d,q)\neq(d_1,q_1) \right\}.
\end{split}
\end{equation*}
Obviously, $\mathcal S_1\subset G_1$ and $\mathcal S_2\subset G_2$, where $\mathcal S_1$ and $\mathcal S_2$ are defined in \eqref{S1S2}. Then
 we cite \cite[Lemma 7 and Theorem 4]{37-Chen-Liu-Wu-2022} as follows.
\begin{proposition}\label{lem-A2}
Suppose that  $d_1,q_1>0$ and $(d_2,q_2)\in G_1\cup G_2$. Then model \eqref{App1} with $(D_{kj})$ and $(Q_{kj})$ defined in \eqref{App2} has no positive equilibrium.
\end{proposition}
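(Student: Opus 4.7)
The plan is to reproduce, in the simpler one-segment configuration, the identity-plus-sign-analysis framework developed above for the Y-shaped network in Theorem \ref{priori2}. Suppose for contradiction that \eqref{App1} with $(D_{kj}), (Q_{kj})$ as in \eqref{App2} admits a positive equilibrium $(\bm w, \bm z) \gg \bm 0$. Following the template of \eqref{fg}, I would define
\begin{equation*}
  f_k := d_1 w_{k-1} - (d_1+q_1) w_k, \qquad g_k := d_2 z_{k-1} - (d_2+q_2) z_k \qquad (2 \leq k \leq n),
\end{equation*}
together with the boundary values $f_1 = g_1 = f_{n+1} = g_{n+1} = 0$ inherited from the downstream and upstream patch equations. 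The equilibrium conditions then rewrite as the telescoping relations $f_k - f_{k+1} = -w_k(r-w_k-z_k)$ and $g_k - g_{k+1} = -z_k(r-w_k-z_k)$ for $1 \leq k \leq n$.

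The next step is to derive the one-segment analogue of Lemma \ref{impident}. I would multiply the $f$-equation by $\left(\frac{d_1+q_1}{d_1}\right)^{k-1} z_k$ and the $g$-equation by the same weight times $w_k$, sum over $1 \leq k \leq l^*$ for an arbitrary $l^* \in \{1,\ldots,n\}$, and subtract (the common factor $w_k z_k (r - w_k - z_k)$ cancels). After telescoping and invoking $f_1 = g_1 = 0$, this produces an identity of the form
\begin{equation*}
  z_{l^*} f_{l^*+1} - w_{l^*} g_{l^*+1} = -\frac{1}{d_1}\sum_{k=2}^{l^*} h_k f_k \left(\frac{d_1+q_1}{d_1}\right)^{k-l^*-1},
\end{equation*}
with $h_k := (d_1-d_2)(z_{k-1}-z_k) - (q_1-q_2) z_k$ exactly as in \eqref{3.3-2}. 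The dual identity anchored at the upstream end is obtained by summing from $l_*+1$ to $n$ instead; taking $l^* = n$ collapses the left-hand side to zero and yields $\sum_{k=2}^n h_k f_k \cdot (\text{positive weight}) = 0$.

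In parallel I would prove the one-segment counterpart of Lemma \ref{priori}: if $f_l \geq 0$ for some $2 \leq l \leq n$ then $g_l \geq 0$ (strictly positive for $l \geq 3$), via the sub-/super-solution argument applied to the two-species auxiliary system truncated to patches $1, \ldots, l-1$; the dual $f_l \leq 0 \Rightarrow g_l \leq 0$ follows symmetrically. With these tools in place, the contradiction closes as in Theorem \ref{priori2}. Without loss of generality $(d_2, q_2) \in G_1$, so the slope condition $q_2/d_2 \leq q_1/d_1$ forces either $h_k < 0$ (when $d_2 = d_1$, since then $q_2 < q_1$) or $h_k \leq \frac{d_1-d_2}{d_2} g_k$ (when $d_2 \neq d_1$, by \cite[Lemma 2.4]{13-Zhou-2016}). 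In either case $h_k f_k$ carries a sign opposite to that of $g_k f_k$, which by sign-propagation is $\geq 0$, so the right-hand side of the identity has a definite sign. A finite case split on the sign pattern of $\{f_k\}$---monotone of a single sign, or with prescribed sign changes at specific indices---then lets me pick $l^*$ at the appropriate switching index so that the boundary terms on the left-hand side carry the opposite sign, yielding the required contradiction. The case $(d_2, q_2) \in G_2$ reduces to $G_1$ by exchanging the two species.

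The main obstacle, as in the present paper's Y-shape treatment, will be the sign bookkeeping: tracking admissible sign patterns of $(f_k, g_k)$ across indices and verifying that every configuration is incompatible with either the identity or the sign-propagation lemma. Fortunately this is considerably lighter than in Theorem \ref{priori2}, since there is no junction patch and no branching identities \eqref{steady-c}-\eqref{steady-d} to reconcile; the case enumeration should collapse to a handful of elementary sub-cases, each dispatched by a single application of the identity.
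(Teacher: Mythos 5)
First, a point of reference: the paper does not prove this proposition at all --- it is quoted verbatim from \cite[Lemma 7, Theorem 4]{37-Chen-Liu-Wu-2022}, so the only in-paper material to compare against is the Y-shaped machinery of Section 3, which your plan correctly identifies as the template. The serious gap in your sketch is the parameter region. The proposition is asserted for all $(d_2,q_2)\in G_1\cup G_2$, and $G_1$ is strictly larger than $\mathcal S_1$ (the paper notes $\mathcal S_1\subset G_1$): for instance every $(d_2,q_2)$ with $d_2>d_1$ and $0<q_2\le q_1$ lies in $G_1\setminus\mathcal S_1$. Your sign argument rests on the chain $h_k\le \frac{d_1-d_2}{d_2}g_k\le 0$ whenever $g_k\le 0$. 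The first inequality is equivalent to $d_1q_2\le d_2q_1$ (one computes $h_k-\frac{d_1-d_2}{d_2}g_k=\frac{d_1q_2-d_2q_1}{d_2}z_k$) and so holds throughout $G_1$, but the second implication requires $d_2<d_1$; when $d_2>d_1$ the prefactor $\frac{d_1-d_2}{d_2}$ is negative, $g_k\le0$ gives no upper bound on $h_k$, and $h_kf_k$ has no definite sign, so the identity yields nothing. Your case split ``$d_2=d_1$, since then $q_2<q_1$'' versus ``$d_2\ne d_1$'' tacitly assumes $(d_2,q_2)\in\mathcal S_1$. This is exactly the boundary at which the present paper's own Theorem \ref{priori2} stops (it is stated only for $\mathcal S_1\cup\mathcal S_2$, with the larger region left as a conjecture for the Y-network), so a formal repetition of that computation cannot reach all of $G_1\cup G_2$; the subregion $d_2>d_1$, $q_2\le q_1$ needs a genuinely different estimate.

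Two further issues. Your one-segment analogue of Lemma \ref{priori} is proved ``via the sub-/super-solution argument applied to the auxiliary system truncated to patches $1,\ldots,l-1$''; but in the paper that argument invokes Propositions \ref{lem-A1} and \ref{lem-A2} themselves for the truncated segment (instability of $(\bm w^{\ast},\bm 0)$ and nonexistence of a positive equilibrium for the $(l-1)$-patch problem). Used to prove Proposition \ref{lem-A2} for $n$ patches, this is circular unless you set up a strong induction on $n$ that establishes the local-stability and nonexistence statements simultaneously for all shorter segments (or give a direct, truncation-free proof of the sign lemma); your sketch does neither. Finally, the assertion that ``$h_kf_k$ carries a sign opposite to that of $g_kf_k$'' is not what the computation gives: in the usable regime $f_k,g_k\le0$ with $d_2<d_1$ one gets $h_k\le 0$ and hence $h_kf_k\ge0$, the \emph{same} sign as $g_kf_k$. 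Since the entire contradiction rests on the definite sign of the right-hand side of the identity against that of the boundary terms, this step needs to be stated and verified precisely rather than gestured at.
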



\begin{proposition}\label{lem-A1}
	Suppose that $d_1,q_1>0$. Then the following two statements for model \eqref{App1} with $(D_{kj})$ and $(Q_{kj})$ defined in \eqref{App2} hold:
	\begin{enumerate}
		\item [\rm{(i)}]
		If $(d_2,q_2)\in G_1$, then $(\bm 0,\bm{z^\ast})$ is globally asymptotically stable
		and $(\bm{w^\ast},\bm{0})$ is unstable;
		\item [\rm{(ii)}]
		If $(d_2,q_2)\in G_2$, then  $(\bm{w^\ast},\bm{0})$ is globally asymptotically stable
		and $(\bm 0,\bm{z^\ast})$ is unstable.
	\end{enumerate}
\end{proposition}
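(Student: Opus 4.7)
The plan is to reduce the global-stability claim to two ingredients, (a) nonexistence of positive equilibria, which is exactly Proposition \ref{lem-A2}, and (b) determination of the local stability of each semi-trivial equilibrium as $(d_2,q_2)$ varies in $G_1\cup G_2$, and then invoke the monotone dynamical-systems trichotomy of Hsu--Smith--Waltman \cite{57-Hsu-Smith-Waltman-1996}. First I would observe that system \eqref{App1} generates a strongly order-preserving semiflow on $\mathbb{R}_+^{2n}$ under the competitive order $(\bm w^1,\bm z^1)\succeq(\bm w^2,\bm z^2)\iff\bm w^1\ge\bm w^2$ and $\bm z^1\le\bm z^2$, and that each single-species logistic-patch subsystem has $\bm w^*$ (resp.\ $\bm z^*$) as globally attracting within its face. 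Together with Proposition \ref{lem-A2}, the trichotomy then guarantees global convergence to whichever semi-trivial equilibrium is asymptotically stable, provided exactly one of them is stable and the other unstable.

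Second I would analyze the principal eigenvalue $\mu_1(d,q)$ of the linearization of the $z$-equation at $(\bm w^*,\bm 0)$, namely
\begin{equation*}
\sum_{j=1}^n\bigl(dD_{kj}+qQ_{kj}\bigr)\phi_j+(r-w^*_k)\phi_k=\mu\phi_k,\quad k=1,\cdots,n,
\end{equation*}
whose sign governs the stability of $(\bm w^*,\bm 0)$. Since $\bm w^*$ solves \eqref{App6}, it is itself a positive eigenvector at $(d,q)=(d_1,q_1)$ with $\mu_1(d_1,q_1)=0$. I would then differentiate the eigenvalue relation in $d$ and $q$, multiply by the weights $\bigl((d_1+q_1)/d_1\bigr)^{k-1}$, and sum by parts in the spirit of Lemma \ref{lm2.3}, so that the derivatives reduce to weighted sums involving the sequence $f^*_k:=d_1w^*_{k-1}-(d_1+q_1)w^*_k$. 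Lemma \ref{lem-A4} gives $f^*_k<0$ for $k=2,\cdots,n$, which immediately yields strict monotonicity of $\mu_1$ along every ray through $(d_1,q_1)$ that transversely crosses the boundary line $q/d=q_1/d_1$.

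Third I would rule out $\mu_1(d_2,q_2)=0$ on $G_1\cup G_2\setminus\{(d_1,q_1)\}$, by the contradiction template of Lemma \ref{lem2}: a zero eigenvalue with positive eigenvector $\bm\psi$ would render $(\bm w^*,\bm\psi)$ a positive solution of a system structurally identical to the equilibrium system for \eqref{App1} but with the nonlinear coupling replaced by the linear one via $\bm w^*$; the single-stream analogue of Theorem \ref{priori2}, using the simpler auxiliary sequence $\{f^*_k\}$ (no branch splitting, so only the patterns from Lemma \ref{priori} and Lemma \ref{canot}(i),(iv) arise), rules this out. Combining nonvanishing with continuity and the sign of the directional derivative computed in the previous step pins the sign of $\mu_1$: positive throughout $G_1$, negative throughout $G_2$. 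Hence $(\bm w^*,\bm 0)$ is unstable on $G_1$ and asymptotically stable on $G_2$. Swapping the roles of the two species (by the symmetry $(d_1,q_1)\leftrightarrow(d_2,q_2)$, $G_1\leftrightarrow G_2$) gives the reverse pattern for $(\bm 0,\bm z^*)$.

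The main obstacle I anticipate is Step 3, the nonvanishing of $\mu_1$ on the relatively large region $G_1\cup G_2$, since $G_i$ strictly contains $\mathcal S_i$ and so the contradiction argument must cover rays with $(d_2,q_2)$ near the diagonal $q/d=q_1/d_1$. Fortunately, for the simple stream the directed-movement graph is a path, so the branch-junction subcases that drove Lemma \ref{canot}(ii)--(iii) and all of Lemma \ref{complex} disappear, and the sign-control of the sequences $\{\tilde f_k,\tilde g_k\}$ reduces to Lemma \ref{fb}-type monotonicity plus the single identity \eqref{fgiden3} with $l_1^*=l_2^*=0$. The remaining delicate point is the degenerate case $d_2=d_1$, where $h_k$ loses the factor $g_k/d_2$ and must be handled separately as in the Y-shape proof.
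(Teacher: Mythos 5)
The paper does not actually prove Proposition \ref{lem-A1}: it is imported verbatim from \cite[Lemma 7, Theorem 4]{37-Chen-Liu-Wu-2022}, so there is no internal proof to compare against. Your outline reproduces the standard strategy of that reference and of the paper's own Sections 3--4 (monotone-system trichotomy plus nonexistence of a positive equilibrium plus an eigenvalue perturbation/non-degeneracy argument), and Steps 1 and 2 are essentially sound: $\bm w^\ast$ is indeed a zero-eigenvalue eigenvector at $(d_1,q_1)$, and Lemma \ref{lem-A4} does give $\partial\mu_1/\partial q(d_1,q_1)<0$ by the summation-by-parts computation of Lemma \ref{lm2.3} (though only the $q$-derivative is controlled this way; your claim of monotonicity along ``every transverse ray'' is an overstatement, but the $q$-derivative together with connectedness of $G_1$, $G_2$ and nonvanishing of $\mu_1$ suffices).

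The genuine gap is in Step 3, and you have mislocated the delicate case. You assert that on a path graph the nonvanishing argument reduces to the $\mathcal S_1$-machinery (Lemmas \ref{priori}, \ref{canot}(i),(iv) and the identity \eqref{fgiden3} with $l_1^\ast=l_2^\ast=0$), with only $d_2=d_1$ needing separate care. But the whole point of Proposition \ref{lem-A1} is that it holds on $G_1\cup G_2$ defined in \eqref{G1G2ap}, which strictly contains $\mathcal S_1\cup\mathcal S_2$, and the key inequality \eqref{whos}, $\tfrac{q_1-q_2}{d_1-d_2}\ge\tfrac{q_2}{d_2}$, which underlies the sign estimate $h_k\le\tfrac{d_1-d_2}{d_2}g_k$ in \eqref{le2.3-s1}, is a consequence of $(d_2,q_2)\in\mathcal S_1$ and simply fails on $G_1\setminus\mathcal S_1$. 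For instance $(d_2,q_2)=(2d_1,q_1/2)\in G_1$ has $\tfrac{q_1-q_2}{d_1-d_2}<0<\tfrac{q_2}{d_2}$, and there $g_k\le0$ gives no control on the sign of $h_k$ because $(d_1-d_2)<0$ reverses the inequality $v_{k-1}-v_k\le\tfrac{q_2}{d_2}v_k$. So the absence of branch junctions removes the combinatorial subcases of Lemmas \ref{canot}(ii)--(iii) and \ref{complex}, but it does not supply the missing pointwise estimate on the larger region; the cited work handles $G_1\setminus\mathcal S_1$ (and symmetrically $G_2\setminus\mathcal S_2$) by a genuinely different argument (a different weighting/pairing of the two equations), which your proposal does not provide. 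As written, your proof establishes the conclusion only on $\mathcal S_1\cup\mathcal S_2$, which is weaker than the stated proposition and, in particular, would not suffice for the paper's application of Proposition \ref{lem-A1} inside the proof of Lemma \ref{priori}, where it is invoked for the full parameter range guaranteed by $\mathcal S_1\subset G_1$.
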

\end{appendix}
\section*{Declarations}

{\bf {Funding:}} S. Chen is supported by National Natural Science Foundation of China (No. 12171117) and Taishan Scholars Program of Shandong Province (No. tsqn 202306137). W. Yan is supported by the Funds for Visiting and Studying of Teachers in Ordinary Undergraduate Universities in Shandong Province.\\
{\bf{Authors' contributions:}} W. Yan and S. Chen wrote the main manuscript text and prepared figures 1-5. All authors reviewed the manuscript.\\
{\bf{Competing Interests:}} The authors declare that they have no conflict of interest.\\
{\bf{Data availability statements:}} All data generated or analyzed during this study is included in this article.\\
{\bf {Ethical Approval:}} This declaration is not applicable.\\


\begin{thebibliography}{10}
	
	\bibitem{37-Chen-Liu-Wu-2022}
	S.~Chen, J.~Liu, and Y.~Wu.
	\newblock Invasion analysis of a two-species {L}otka-{V}olterra competition
	model in an advective patchy environment.
	\newblock {\em Stud. Appl. Math.}, 149(3):762--797, 2022.
	
	\bibitem{38-Chen-Liu-Wu-2023}
	S.~Chen, J.~Liu, and Y.~Wu.
	\newblock On the impact of spatial heterogeneity and drift rate in a
	three-patch two-species {L}otka-{V}olterra competition model over a stream.
	\newblock {\em Z. Angew. Math. Phys.}, 74(3):Paper No. 117, 32, 2023.
	
	\bibitem{6-Chen-Shi-Shuai-Wu-2023}
	S.~Chen, J.~Shi, Z.~Shuai, and Y.~Wu.
	\newblock Evolution of dispersal in advective patchy environments.
	\newblock {\em J. Nonlinear Sci.}, 33(3):Paper No. 40, 35, 2023.
	
	\bibitem{54-Cosner-1996}
	C.~Cosner.
	\newblock Variability, vagueness and comparison methods for ecological models.
	\newblock {\em Bull. Math. Biol.}, 58(2):207--246, 1996.
	
	\bibitem{S1-Cuddington-2002}
	K.~Cuddington and P.~Yodzis.
	\newblock Predator-prey dynamics and movement in fractal environments.
	\newblock {\em Am. Nat.}, 160(1):119--134, 2002.
	
	\bibitem{16-Dockery-1998}
	J.~Dockery, V.~Hutson, K.~Mischaikow, and M.~Pernarowski.
	\newblock The evolution of slow dispersal rates: a reaction diffusion model.
	\newblock {\em J. Math. Biol.}, 37(1):61--83, 1998.
	
	\bibitem{S2-Fagan-2002}
	W.~F. Fagan.
	\newblock Connectivity, fragmentation, and extinction risk in dendritic
	metapopulations.
	\newblock {\em Ecology}, 83(12):3243--3249, 2002.
	
	\bibitem{27-Ge-Tang-2023}
	Q.~Ge and D.~Tang.
	\newblock Global dynamics of a two-species {L}otka-{V}olterra
	competition-diffusion-advection system with general carrying capacities and
	intrinsic growth rates {II}: {D}ifferent diffusion and advection rates.
	\newblock {\em J. Differential Equations}, 344:735--766, 2023.
	
	\bibitem{26-Ge-Tang-2021}
	Q.~Ge and D.~Tang.
	\newblock Global dynamics of two-species {L}otka-{V}olterra
	competition-diffusion-advection system with general carrying capacities and
	intrinsic growth rates.
	\newblock {\em J. Dynam. Differential Equations}, 36(2):1905--1926, 2024.
	
	\bibitem{S3-Goldberg-2010}
	E.~E. Goldberg, H.~J. Lynch, M.~G. Neubert, and W.~F. Fagan.
	\newblock Effects of branching spatial structure and life history on the
	asymptotic growth rate of a population.
	\newblock {\em Theor. Ecol.}, 3(3):137--152, 2010.
	
	\bibitem{S4-Grant-2007}
	E.~H.~C. Grant, W.~H. Lowe, and W.~F. Fagan.
	\newblock Living in the branches: population dynamics and ecological processes
	in dendritic networks.
	\newblock {\em Ecol. Lett.}, 10(2):165--175, 2007.
	
	\bibitem{S5-Grant-2010}
	E.~H.~C. Grant, J.~D. Nichols, W.~H. Lowe, and W.~F. Fagan.
	\newblock Use of multiple dispersal pathways facilitates amphibian persistence
	in stream networks.
	\newblock {\em Proc. Natl. Acad. Sci. USA}, 107(15):6936--6940, 2010.
	
	\bibitem{34-Hamida-2017}
	Y.~Hamida.
	\newblock The evolution of dispersal for the case of two-patches and
	two-species with travel loss.
	\newblock {\em M.S. Thesis, Ohio State University}, 2017.
	
	\bibitem{11-Hao-Lam-Lou-2021}
	W.~Hao, K.-Y. Lam, and Y.~Lou.
	\newblock Ecological and evolutionary dynamics in advective environments:
	critical domain size and boundary conditions.
	\newblock {\em Discrete Contin. Dyn. Syst. Ser. B}, 26(1):367--400, 2021.
	
	\bibitem{17-Hastings-1983}
	A.~Hastings.
	\newblock Can spatial variation alone lead to selection for dispersal?
	\newblock {\em Theor. Popul. Biol.}, 24(3):244--251, 1983.
	
	\bibitem{18-He-Ni-2016}
	X.~He and W.-M. Ni.
	\newblock Global dynamics of the {L}otka-{V}olterra competition-diffusion
	system: diffusion and spatial heterogeneity {I}.
	\newblock {\em Comm. Pure Appl. Math.}, 69(5):981--1014, 2016.
	
	\bibitem{43-Hess-1992}
	P.~Hess.
	\newblock Periodic-{P}arabolic {B}oundary {V}alue {P}roblems and {P}ositivity.
	\newblock {\em Pitman Res., Notes in Mathematics}, 247, Longman Sci. Tech.,
	Harlow, 1991.
	
	\bibitem{57-Hsu-Smith-Waltman-1996}
	S.~B. Hsu, H.~L. Smith, and P.~Waltman.
	\newblock Competitive exclusion and coexistence for competitive systems on
	ordered banach spaces.
	\newblock {\em Trans. Amer. Math. Soc.}, 348(10):4083--4094, 1996.
	
	\bibitem{51-Huang-Jin-2016}
	Q.~Huang, Y.~Jin, and M.~A. Lewis.
	\newblock {$R_0$} analysis of a benthic-drift model for a stream population.
	\newblock {\em SIAM J. Appl. Dyn. Syst.}, 15(1):287--321, 2016.
	
	\bibitem{32-Jiang-Lam-Lou-2020}
	H.~Jiang, K.-Y. Lam, and Y.~Lou.
	\newblock Are two-patch models sufficient? {T}he evolution of dispersal and
	topology of river network modules.
	\newblock {\em Bull. Math. Biol.}, 82(10):Paper No. 131, 42, 2020.
	
	\bibitem{33-Jiang-Lam-Lou-2021}
	H.~Jiang, K.-Y. Lam, and Y.~Lou.
	\newblock Three-patch models for the evolution of dispersal in advective
	environments: varying drift and network topology.
	\newblock {\em Bull. Math. Biol.}, 83(10):Paper No. 109, 46, 2021.
	
	\bibitem{52-Jiang-Huang-2019}
	Y.~Jin, Q.~Huang, J.~Blackburn, and M.~A. Lewis.
	\newblock Persistence metrics for a river population in a two-dimensional
	benthic-drift model.
	\newblock {\em AIMS Math.}, 4(6):1768--1795, 2019.
	
	\bibitem{7-Jin-Lewis-2011}
	Y.~Jin and M.~A. Lewis.
	\newblock Seasonal influences on population spread and persistence in streams:
	critical domain size.
	\newblock {\em SIAM J. Appl. Math.}, 71(4):1241--1262, 2011.
	
	\bibitem{8-Jin-Lewis-2012}
	Y.~Jin and M.~A. Lewis.
	\newblock Seasonal influences on population spread and persistence in streams:
	spreading speeds.
	\newblock {\em J. Math. Biol.}, 65(3):403--439, 2012.
	
	\bibitem{44-Jin-Peng-2024}
	Y.~Jin and R.~Peng.
	\newblock A time-periodic parabolic eigenvalue problem on finite networks and
	its applications.
	\newblock {\em J. Nonlinear Sci.}, 34(5):Paper No. 85, 2024.
	
	\bibitem{42-Jin-Peng-Shi-2019}
	Y.~Jin, R.~Peng, and J.~Shi.
	\newblock Population dynamics in river networks.
	\newblock {\em J. Nonlinear Sci.}, 29(6):2501--2545, 2019.
	
	\bibitem{31-Lam-Lou-2015}
	K.-Y. Lam, Y.~Lou, and F.~Lutscher.
	\newblock Evolution of dispersal in closed advective environments.
	\newblock {\em J. Biol. Dyn.}, 9(1):188--212, 2015.
	
	\bibitem{58-Lam-Munther-2010}
	K.-Y. Lam and D.~Munther.
	\newblock A remark on the global dynamics of competitive systems on ordered
	banach spaces.
	\newblock {\em Proc. Amer. Math. Soc.}, 144(3):1153--1159, 2016.
	
	\bibitem{19-Lam-Ni-2012}
	K.-Y. Lam and W.-M. Ni.
	\newblock Uniqueness and complete dynamics in heterogeneous
	competition-diffusion systems.
	\newblock {\em SIAM J. Appl. Math.}, 72(6):1695--1712, 2012.
	
	\bibitem{60-Li-Schneider-2002}
	C.-K. Li and H.~Schneider.
	\newblock Applications of {P}erron–{F}robenius theory to population dynamics.
	\newblock {\em J. Math. Biol.}, 44(5):450--462, 2002.
	
	\bibitem{55-Li-Shuai-2010}
	M.~Li and Z.~Shuai.
	\newblock Global-stability problem for coupled systems of differential
	equations on networks.
	\newblock {\em J. Differential Equations}, 248(1):1--20, 2010.
	
	\bibitem{46-Liu-Liu-Chen-2024}
	W.~Liu, J.~Liu, and S.~Chen.
	\newblock Dynamics of {L}otka-{V}olterra competition patch models in streams
	with two branches.
	\newblock {\em Bull. Math. Biol.}, 86(2):Paper No. 14, 47, 2024.
	
	\bibitem{20-Lou-2006}
	Y.~Lou.
	\newblock On the effects of migration and spatial heterogeneity on single and
	multiple species.
	\newblock {\em J. Differential Equations}, 223(2):400--426, 2006.
	
	\bibitem{36-Lou-2021}
	Y.~Lou.
	\newblock Ideal free distribution in two patches.
	\newblock {\em J. Nonlinear Model. Anal.}, 2(10):151--167, 2021.
	
	\bibitem{3-Lutscher-2014}
	Y.~Lou and F.~Lutscher.
	\newblock Evolution of dispersal in open advective environments.
	\newblock {\em J. Math. Biol.}, 69(6-7):1319--1342, 2014.
	
	\bibitem{47-Lou-Nie-2022}
	Y.~Lou and H.~Nie.
	\newblock Global dynamics of a generalist predator-prey model in open advective
	environments.
	\newblock {\em J. Math. Biol.}, 84(6):Paper No. 46, 40, 2022.
	
	\bibitem{21-Lou-Nie-Wang-2018}
	Y.~Lou, H.~Nie, and Y.~Wang.
	\newblock Coexistence and bistability of a competition model in open advective
	environments.
	\newblock {\em Math. Biosci.}, 306:10--19, 2018.
	
	\bibitem{12-Lou-Xiao-Zhou-2011}
	Y.~Lou, D.~Xiao, and P.~Zhou.
	\newblock Qualitative analysis for a {L}otka-{V}olterra competition system in
	advective homogeneous environment.
	\newblock {\em Discrete Contin. Dyn. Syst.}, 36(2):953--969, 2016.
	
	\bibitem{28-Lou-Zhao-Zhou-2019}
	Y.~Lou, X.-Q. Zhao, and P.~Zhou.
	\newblock Global dynamics of a {L}otka-{V}olterra
	competition-diffusion-advection system in heterogeneous environments.
	\newblock {\em J. Math. Pures Appl.}, 121(9):47--82, 2019.
	
	\bibitem{4-Lou-Zhou-2015}
	Y.~Lou and P.~Zhou.
	\newblock Evolution of dispersal in advective homogeneous environment: the
	effect of boundary conditions.
	\newblock {\em J. Differential Equations}, 259(1):141--171, 2015.
	
	\bibitem{56-Lu-Takeuchi-1993}
	Z.~Lu and Y.~Takeuchi.
	\newblock Global asymptotic behavior in single-species discrete diffusion
	systems.
	\newblock {\em J. Math. Biol.}, 32(1):67--77, 1993.
	
	\bibitem{23-Ma-Tang-2020}
	L.~Ma and D.~Tang.
	\newblock Evolution of dispersal in advective homogeneous environments.
	\newblock {\em Discrete Contin. Dyn. Syst.}, 40(10):5815--5830, 2020.
	
	\bibitem{S6-Casagrandi-2014}
	L.~Mari, R.~Casagrandi, E.~Bertuzzo, A.~Rinaldo, and M.~Gatto.
	\newblock Metapopulation persistence and species spread in river networks.
	\newblock {\em Ecol. Lett.}, 17(4):426--434, 2014.
	
	\bibitem{2-Muller-1954}
	K.~M\"{u}ller.
	\newblock Investigation on the organic drift in north swedish streams.
	\newblock {\em Rept. Inst. Freshw. Res.}, 35:133--148, 1954.
	
	\bibitem{48-Nie-Wang-Wu-2020}
	H.~Nie, B.~Wang, and J.~Wu.
	\newblock Invasion analysis on a predator-prey system in open advective
	environments.
	\newblock {\em J. Math. Biol.}, 81(6-7):1429--1463, 2020.
	
	\bibitem{35-Noble-2015}
	L.~Noble.
	\newblock Evolution of {D}ispersal in {P}atchy {H}abitats.
	\newblock {\em Ph.D Thesis, Ohio State University}, 2015.
	
	\bibitem{39-Ramirez-2012}
	J.~M. Ramirez.
	\newblock Population persistence under advection-diffusion in river networks.
	\newblock {\em J. Math. Biol.}, 65(5):919--942, 2012.
	
	\bibitem{41-Sarhad-2014}
	J.~Sarhad, R.~Carlson, and K.~E. Anderson.
	\newblock Population persistence in river networks.
	\newblock {\em J. Math. Biol.}, 69(2):401--448, 2014.
	
	\bibitem{40-Sarhad-2015}
	J.~J. Sarhad and K.~E. Anderson.
	\newblock Modeling population persistence in continuous aquatic networks using
	metric graphs.
	\newblock {\em Fundam. Appl. Limnol.}, 186(1-2):135--152, 2015.
	
	\bibitem{59-Smith-1995}
	H.~L. Smith.
	\newblock Monotone {D}ynamical {S}ystems : {A}n {I}ntroduction to the {T}heory
	of {C}ompetitive and {C}ooperative systems.
	\newblock {\em American Mathematical Society}, 1995.
	
	\bibitem{1-Speirs-2001}
	D.~C. Speirs and W.~S.~C. Gurney.
	\newblock Population persistence in rivers and estuaries.
	\newblock {\em Ecology}, 82(5):1219--1237, 2001.
	
	\bibitem{24-Tang-Chen-2020}
	D.~Tang and Y.~Chen.
	\newblock Global dynamics of a {L}otka-{V}olterra competition-diffusion system
	in advective homogeneous environments.
	\newblock {\em J. Differential Equations}, 269(2):1465--1483, 2020.
	
	\bibitem{49-Tang-Chen-2022}
	D.~Tang and Y.~Chen.
	\newblock Predator-prey systems in open advective heterogeneous environments
	with {H}olling-{T}anner interaction term.
	\newblock {\em J. Differential Equations}, 334:280--308, 2022.
	
	\bibitem{45-Vasilyeva-2019}
	O.~Vasilyeva.
	\newblock Population dynamics in river networks: analysis of steady states.
	\newblock {\em J. Math. Biol.}, 79(1):63--100, 2019.
	
	\bibitem{5-Vasilyeva-Lutscher}
	O.~Vasilyeva and F.~Lutscher.
	\newblock Population dynamics in rivers: analysis of steady states.
	\newblock {\em Can. Appl. Math. Q.}, 18(4):439--469, 2010.
	
	\bibitem{61-Vasilyeva-2024}
	O.~Vasilyeva, D.~Smith, and F.~Lutscher.
	\newblock Evolution of dispersal in river networks.
	\newblock {\em Bull. Math. Biol.}, 86(12):Paper No. 140, 23, 2024.
	
	\bibitem{50-Wang-Nie-2022}
	J.~Wang and H.~Nie.
	\newblock Invasion dynamics of a predator-prey system in closed advective
	environments.
	\newblock {\em J. Differential Equations}, 318:298--322, 2022.
	
	\bibitem{9-Wang-Shi-2019}
	Y.~Wang and J.~Shi.
	\newblock Persistence and extinction of population in
	reaction-diffusion-advection model with weak {A}llee effect growth.
	\newblock {\em SIAM J. Appl. Math.}, 79(4):1293--1313, 2019.
	
	\bibitem{53-Wang-Shi-2020}
	Y.~Wang and J.~Shi.
	\newblock Analysis of a reaction-diffusion benthic-drift model with strong
	{A}llee effect growth.
	\newblock {\em J. Differential Equations}, 269(9):7605--7642, 2020.
	
	\bibitem{10-Wang-Shi-2019}
	Y.~Wang, J.~Shi, and J.~Wang.
	\newblock Persistence and extinction of population in
	reaction-diffusion-advection model with strong {A}llee effect growth.
	\newblock {\em J. Math. Biol.}, 78(7):2093--2140, 2019.
	
	\bibitem{25-Wang-Xu-Zhou-2024}
	Y.~Wang, Q.~Xu, and P.~Zhou.
	\newblock Evolution of dispersal in advective homogeneous environments: inflow
	versus outflow.
	\newblock {\em SIAM J. Math. Anal.}, 56(2):1643--1671, 2024.
	
	\bibitem{22-Yan-Li-Nie-2021}
	X.~Yan, Y.~Li, and H.~Nie.
	\newblock Dynamical behaviors of a classical {L}otka-{V}olterra
	competition-diffusion-advection system.
	\newblock {\em Nonlinear Anal. -Real World Appl.}, 61:Paper No. 103344, 2021.
	
	\bibitem{15-Yan-Nie-Zhou-2022}
	X.~Yan, H.~Nie, and P.~Zhou.
	\newblock On a competition-diffusion-advection system from river ecology:
	mathematical analysis and numerical study.
	\newblock {\em SIAM J. Appl. Dyn. Syst.}, 21(1):438--469, 2022.
	
	\bibitem{29-Zhao-Zhou-2016}
	X.-Q. Zhao and P.~Zhou.
	\newblock On a {L}otka-{V}olterra competition model: the effects of advection
	and spatial variation.
	\newblock {\em Calc. Var. Partial Differential Equations}, 55(4):Paper No. 73,
	25, 2016.
	
	\bibitem{13-Zhou-2016}
	P.~Zhou.
	\newblock On a {L}otka-{V}olterra competition system: diffusion vs advection.
	\newblock {\em Calc. Var. Partial Differential Equations}, 55(6):Paper No. 137,
	29, 2016.
	
	\bibitem{30-Zhou-Zhao-2018}
	P.~Zhou and X.-Q. Zhao.
	\newblock Evolution of passive movement in advective environments: general
	boundary condition.
	\newblock {\em J. Differential Equations}, 264(6):4176--4198, 2018.
	
	\bibitem{14-Zhou-Zhao-2018}
	P.~Zhou and X.-Q. Zhao.
	\newblock Global dynamics of a two species competition model in open stream
	environments.
	\newblock {\em J. Dynam. Differential Equations}, 30(2):613--636, 2018.
	
\end{thebibliography}
\end{document}